\documentclass[a4paper, 11pt]{amsart}
\usepackage{mathptmx,amssymb,amscd,latexsym, eulervm}
\usepackage{amsmath}
\usepackage{amsthm}
\usepackage{mathdots}
\usepackage[colorlinks=true,citecolor=violet,linkcolor=blue,urlcolor=blue]{hyperref}
\usepackage[dvipsnames]{xcolor}
\usepackage[onehalfspacing]{setspace}
\usepackage{tabularx}
\usepackage{amsfonts}
\usepackage{paralist}
\usepackage{aliascnt}
\usepackage{amscd}
\usepackage{blkarray}
\usepackage{mathbbol}
\usepackage{setspace}
\usepackage{needspace}
\usepackage[inner=2.4cm,outer=2.4cm, bottom=3.2cm]{geometry}
\usepackage{tikz, tikz-cd}
\usepackage{calligra,mathrsfs}

\usepackage{tikz}
\usetikzlibrary{matrix}
\usetikzlibrary{arrows,calc}
\allowdisplaybreaks

\AtBeginDocument{%
	\def\MR#1{}
}

\makeatletter
\@namedef{subjclassname@2020}{%
	\textup{2020} Mathematics Subject Classification}
\makeatother

\newcommand{\RR}{\mathbb{R}}

\newcommand{\kk}{\mathbb{k}}
\newcommand{\KK}{\mathbb{K}}

\newcommand{\bu}{\mathbf{u}}

\newcommand{\rk}{\text{\rm rk}}

\newcommand{\sP}{\mathscr{P}}
\newcommand{\cone}{{\rm Cone}}
\newcommand{\Cone}{{\rm Cone}}
\newcommand{\Rec}{\operatorname{Rec}}

\newcommand{\CC}{\mathbb{C}}

\newcommand{\NN}{\normalfont\mathbb{N}}
\newcommand{\ZZ}{\mathbb{Z}}

\newcommand{\PP}{{\normalfont\mathbb{P}}}

\newcommand{\QQ}{\mathbb{Q}}

\newcommand{\bn}{{\normalfont\mathbf{n}}}
\newcommand{\bm}{{\normalfont\mathbf{m}}}

\newcommand{\TMsupp}{{\normalfont\text{TMSupp}}}

\newcommand{\ttt}{{\normalfont\mathbf{t}}}

\newcommand{\Ker}{\normalfont\text{Ker}}

\newcommand{\IM}{\normalfont\text{Im}}

\newcommand{\ee}{{\normalfont\mathbf{e}}}
\newcommand{\Hom}{\normalfont\text{Hom}}

\newcommand{\OO}{\mathscr{O}}

\newcommand{\LL}{\mathscr{L}}

\newcommand{\st}{\smallfrown_{\rm st}}

\newcommand{\HH}{\normalfont\text{H}}

\newcommand{\Spec}{\normalfont\text{Spec}}

\newcommand{\msupp}{\normalfont\text{MSupp}}

\newcommand{\NR}{N_{\RR}}
\newcommand{\TL}{T_L}
\newcommand{\bdeq}{\sim_{\mathrm{bd}}}

\DeclareMathOperator{\Trop}{Trop}
\DeclareMathOperator{\Pic}{Pic}
\DeclareMathOperator{\Relint}{{\rm Relint}}
\DeclareMathOperator{\Span}{{\rm Span}}
\DeclareMathOperator{\Star}{{\rm Star}}
\DeclareMathOperator{\MW}{{\rm MW}}

\def\fu{\mathbf{u}}
\def\f0{\mathbf{0}}

\def\fv{\mathbf{v}}

\def\1{\mathbf{1}}

\def\ba{\mathbf{a}}

\def\bw{\mathbf{w}}
\def\bv{\mathbf{v}}

\def\ba{\mathbf{a}}

\newtheorem{theorem}{Theorem}[section]

\newtheorem{headthm}{Theorem}

\newaliascnt{headcor}{headthm}
\newtheorem{headcor}[headcor]{Corollary}
\aliascntresetthe{headcor}

\newaliascnt{headconj}{headthm}

\aliascntresetthe{headconj}

\newaliascnt{corollary}{theorem}
\newtheorem{corollary}[corollary]{Corollary}
\aliascntresetthe{corollary}

\newaliascnt{claim}{theorem}

\aliascntresetthe{claim}

\newaliascnt{lemma}{theorem}
\newtheorem{lemma}[lemma]{Lemma}
\aliascntresetthe{lemma}

\newaliascnt{conjecture}{theorem}

\aliascntresetthe{conjecture}

\newaliascnt{proposition}{theorem}
\newtheorem{proposition}[proposition]{Proposition}
\aliascntresetthe{proposition}

\theoremstyle{definition}
\newaliascnt{definition}{theorem}
\newtheorem{definition}[definition]{Definition}
\aliascntresetthe{definition}

\newaliascnt{notation}{theorem}

\aliascntresetthe{notation}

\newaliascnt{example}{theorem}
\newtheorem{example}[example]{Example}
\aliascntresetthe{example}

\newaliascnt{examples}{theorem}

\aliascntresetthe{examples}

\newaliascnt{remark}{theorem}
\newtheorem{remark}[remark]{Remark}
\aliascntresetthe{remark}

\newaliascnt{question}{theorem}
\newtheorem{question}[question]{Question}
\aliascntresetthe{question}

\newaliascnt{questions}{theorem}

\aliascntresetthe{questions}

\newaliascnt{problem}{theorem}

\aliascntresetthe{problem}

\newaliascnt{construction}{theorem}

\aliascntresetthe{construction}

\newaliascnt{setup}{theorem}

\aliascntresetthe{setup}

\newaliascnt{algorithm}{theorem}

\aliascntresetthe{algorithm}

\newaliascnt{observation}{theorem}

\aliascntresetthe{observation}

\newaliascnt{defprop}{theorem}

\aliascntresetthe{defprop}

\newaliascnt{fact}{theorem}

\aliascntresetthe{fact}

\DeclareFontFamily{OT1}{pzc}{}
\DeclareFontShape{OT1}{pzc}{m}{it}{<-> s * [1.100] pzcmi7t}{}
\DeclareMathAlphabet{\mathchanc}{OT1}{pzc}{m}{it}

\DeclareMathOperator{\cl}{{\rm cl}}

\def\equationautorefname~#1\null{(#1)\null}
\def\sectionautorefname~#1\null{Section #1\null}
\def\subsectionautorefname~#1\null{\S #1\null}

\def\surjects{\twoheadrightarrow}

\title{When are tropical multidegrees positive?}
\author{Yairon Cid-Ruiz}
\address{Department of Mathematics, North Carolina State University, Raleigh, NC 27695, USA}
\email{ycidrui@ncsu.edu}

\date{\today}
\keywords{Tropical multidegrees, stable intersections, toric varieties, Minkowski weights, polymatroids, augmented Bergman fans, Lorentzian polynomials}
\subjclass[2020]{14C17, 14T15, 14T20, 05B35, 52B40}

\hypersetup{
	pdftitle={When are tropical multidegrees positive?},
	pdfauthor={Yairon Cid-Ruiz},
	pdfkeywords={tropical multidegrees, stable intersections, polymatroids, Lorentzian polynomials}
}

\begin{document}

\begin{abstract}
We study the positivity of the tropical multidegrees of a tropical variety contained in a product of real vector spaces. 
These multidegrees are obtained by stably intersecting the tropical variety with pullbacks of positive tropical divisors.
We introduce projection-purity and facet-selectability, two conditions under which positivity is determined by the dimensions of the natural projections, and the support of the tropical multidegrees is precisely the set of lattice points of a polymatroid base polytope. 
This extends He’s theorem for translation-admissible tropical varieties. 
We also show that these conditions alone do not force the corresponding tropical volume polynomial to be Lorentzian. 
By contrast, for the augmented Bergman fan of any polymatroid, the positive multidegrees are supported precisely on the lattice points of the polymatroid base polytope, and the tropical volume polynomial is Lorentzian for every sequence of positive tropical divisors.
\end{abstract}

	\maketitle

	\tableofcontents

\section{Introduction}

The degree of a projective variety is one of its most fundamental numerical invariants.
In a product of projective spaces, it is replaced by a collection of intersection numbers, called \emph{multidegrees}, recording the geometry of the variety with respect to the different factors.
Going back to seminal work of van der Waerden \cite{VAN_DER_WAERDEN}, multidegrees play a fundamental role in subjects ranging from algebraic geometry and commutative algebra to combinatorics, convex geometry, Schubert calculus, and algebraic statistics (see, e.g., \cite{Bhattacharya,HERMANN_MULTIGRAD,trung2001positivity,KNUTSON_MILLER_SCHUBERT,TRUNG_VERMA,STURMFELS_UHLER,Huh12,HUH_STURMFELS_LIKELIHOOD,EXPONENTIAL_VARIETIES,CCLMZ,cidruiz2021mixed,michalek2020maximum,CCC,manivel2020complete}).
More precisely, let $\kk$ be a field and 
$
X\subset \PP:=\PP_\kk^{m_1}\times_\kk\cdots\times_\kk \PP_\kk^{m_p}
$
be an irreducible multiprojective variety of dimension $d$, and let $H_i$ denote the pullback of the hyperplane class from the $i$-th factor $\PP_\kk^{m_i}$.
For every $\bn=(n_1,\ldots,n_p)\in\NN^p$ with $|\bn|=n_1+\cdots+n_p=d$, the corresponding \emph{multidegree} is
$$
\deg_\PP^\bn(X)
\;:=\;
\int_\PP H_1^{n_1}\cdots H_p^{n_p} \cdot \left[X\right].
$$
These numbers are nonnegative, and hence a fundamental question arises: \emph{when are multidegrees
positive?}
For $I\subseteq[p]=\{1,\ldots,p\}$, let
$
\Pi_I:\PP\longrightarrow\prod_{i\in I}\PP_\kk^{m_i}
$
be the natural projection.
Castillo, Cid-Ruiz, Li, Monta\~no, and Zhang \cite{CCLMZ} gave a complete answer to the
positivity question in terms of the dimensions of the images of $X$ under these projections.

\begin{theorem}[Castillo -- Cid-Ruiz -- Li -- Monta\~no -- Zhang \cite{CCLMZ}]
	\label{thm_positivity}
	Let 
	$
	X\subset \PP=\PP_\kk^{m_1}\times_\kk\cdots\times_\kk \PP_\kk^{m_p}
	$
	be a $d$-dimensional irreducible multiprojective variety.
	For every $\bn=(n_1,\ldots,n_p)\in\NN^p$ with $|\bn|=d$, we have
	$$
	\deg_{\PP}^\bn(X)>0 \quad \text{ if and only if } \quad \sum_{i \in I} n_i \;\le\; \dim\left(\Pi_I(X)\right) \text{\; for every \;$I\subseteq[p]$.}
	$$
	Moreover, the function
	$
	r_X:2^{[p]}\longrightarrow\NN
	$,
	$I\longmapsto\dim\left(\Pi_I(X)\right)$
	is the rank function of a polymatroid.
\end{theorem}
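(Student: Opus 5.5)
We plan to prove the positivity criterion by induction on $d$ (and on $p$), cutting $X$ with general hyperplanes pulled back from individual factors. First we reduce to an infinite field: extending scalars to $\overline{\kk}$ does not change the intersection numbers. It also does not change the dimensions of the projections, provided we work with each irreducible component of $X_{\overline{\kk}}$. All components have the same projection dimensions, since they are Galois-conjugate after a purely inseparable adjustment. So we may assume $\kk$ is algebraically closed.

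\emph{Necessity.} Suppose $\deg_\PP^\bn(X)>0$ and fix $I\subseteq[p]$. The class $\prod_{i\in I}H_i^{n_i}$ is pulled back along $\Pi_I$. By the projection formula,
\[
\int_\PP H_1^{n_1}\cdots H_p^{n_p}\cdot[X] \;=\; \int \Big(\prod_{i\notin I}H_i^{n_i}\Big)\cdot \Pi_I^*\Big(\prod_{i\in I}H_i^{n_i}\Big)\cdot[X].
\]
If $\sum_{i\in I}n_i>\dim\Pi_I(X)$, then general hyperplanes realizing $\prod_{i\in I}H_i^{n_i}$ miss $\Pi_I(X)$ entirely, so the product vanishes. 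This gives the inequality.

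\emph{Sufficiency.} Induct on $d$; the case $d=0$ is trivial. Suppose $\bn$ satisfies all the inequalities with $|\bn|=d>0$, and pick $j$ with $n_j>0$. Let $X'=X\cap \Pi_j^{-1}(L)$ for a general hyperplane $L\subset\PP^{m_j}$. Then $\Pi_j(X)$ has positive dimension, since $n_j\le r_X(\{j\})$. By Bertini (for $d\ge2$ the intersection is irreducible; for $d=1$ it is a nonempty finite set) $X'$ has dimension $d-1$, and
\[
\deg^\bn_\PP(X)=\sum_{\text{components } Y} \deg^{\bn-e_j}_\PP(Y).
\]
It therefore suffices to find a component $Y$ such that $\bn-e_j$ satisfies the inequalities for $Y$.

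The key claim is that for general $L$,
\[
\dim\Pi_I(X')=\dim\Pi_I(X)-1 \text{ if } j\in \cl(I), \qquad \dim\Pi_I(X')=\dim\Pi_I(X) \text{ otherwise},
\]
where $j\in\cl(I)$ means $r_X(I\cup j)=r_X(I)$. In the first case the inequality for $I$ is preserved only if it had slack. This forces a careful choice of $j$: it must lie in no tight set $I$ with $j\notin I$ whose closure contains $j$. The natural choice is to take $j$ in a minimal tight set containing a coordinate with $n_j>0$, or simply to argue via polymatroid exchange. The tight sets of $\bn$ form a lattice closed under union and intersection, by submodularity. If $j$ lies in the minimal tight set $T$, then for any tight $I$ with $j\in\cl(I)\setminus I$ we can compare with $I\cap T$ and derive a contradiction.

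I expect this combinatorial choice of $j$, together with the geometric dimension-drop claim, to be the main obstacle. The claim follows by applying Bertini to $\Pi_{I\cup j}(X)$: its fibres over $\Pi_I(X)$ are finite when $j\in\cl(I)$, in which case the hyperplane cuts the projection dimension by one. Otherwise the generic fibre has positive-dimensional image in $\PP^{m_j}$, and a general hyperplane meets every such fibre.

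\emph{Polymatroid property.} The rank function $r_X$ satisfies $r_X(\emptyset)=0$, and it is monotone because $\Pi_I(X)$ is a projection of $\Pi_J(X)$ for $I\subseteq J$. For submodularity, consider
\[
\Pi_{I\cup J}(X)\to \Pi_I(X)\times_{\Pi_{I\cap J}(X)}\Pi_J(X).
\]
By fibre-dimension theorems, $\dim\Pi_{I\cup J}(X)-\dim\Pi_J(X)$ is the generic fibre dimension of $\Pi_{I\cup J}(X)\to\Pi_J(X)$. This fibre dimension is at most that of $\Pi_I(X)\to\Pi_{I\cap J}(X)$, because the fibre over a general point of $\Pi_J(X)$ embeds into a fibre of the latter map. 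Hence
\[
r_X(I\cup J)-r_X(J)\le r_X(I)-r_X(I\cap J),
\]
which is submodularity.
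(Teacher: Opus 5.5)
The paper gives no proof of this statement; it quotes it from \cite{CCLMZ}, so your argument has to stand on its own. Several parts are fine as written: the reduction to $\overline{\kk}$, the necessity direction, and the submodularity argument. For submodularity, a general fibre of $\Pi_{I\cup J}(X)\to\Pi_J(X)$ embeds into a fibre of $\Pi_I(X)\to\Pi_{I\cap J}(X)$ over a general point, because $\Pi_J(X)\to\Pi_{I\cap J}(X)$ is dominant.

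\textbf{The genuine gap} is the passage from $X'=X\cap\Pi_j^{-1}(L)$ to a single component $Y$. Bertini gives irreducibility of $X'$ for general $L$ when $\dim\Pi_j(X)\ge 2$, not when $d\ge 2$. If $\dim\Pi_j(X)=1$, then $X'$ is the union of the fibres of $X\to\Pi_j(X)$ over the finitely many points of $\Pi_j(X)\cap L$, and these fibres can be reducible. For example, take $X=V(s_0u_1^2-s_1u_0^2)\subset\PP^1\times\PP^1\times\PP^1$ in characteristic $\neq 2$ with $j=1$: here $d=2$, yet the section is two disjoint lines.

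Your key claim only computes $\dim\Pi_I(X')=\max_Y\dim\Pi_I(Y)$. A priori, different sets $I$ could be witnessed by different components. This is exactly the phenomenon that makes the tropical criterion of \autoref{thm_main_crit} facet-wise. So the claim does not yet produce one $Y$ satisfying all the inequalities for $\bn-\ee_j$ simultaneously; you need the dimension count for every component.

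When $\dim\Pi_j(X)=1$, this is short:
\begin{itemize}
\item Each component $Y$ has dimension $d-1$ and lies over a single point of $\Pi_j(X)$, so $\dim\Pi_I(Y)=\dim\Pi_{I\cup\{j\}}(Y)$.
\item Put $h=\Pi_{I\cup\{j\}}|_X\colon X\to Z$, and let $V\subseteq Z$ be a dense open set over which the fibres of $h$ have dimension $d-\dim Z$.
\item For general $L$, the set $h^{-1}(Z\setminus V)\cap\Pi_j^{-1}(L)$ has dimension at most $d-2$. Hence $Y$ meets $h^{-1}(V)$, and so $\dim h(Y)\ge\dim Z-1$.
\item Therefore $\dim\Pi_I(Y)\ge r_X(I\cup\{j\})-1\ge|\bn|_{I\cup\{j\}}-1\ge|\bn-\ee_j|_I$ for every $I$.
\end{itemize}
When $\dim\Pi_j(X)\ge 2$, Bertini makes $X'$ irreducible, so your claim about $X'$ suffices. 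However, your case $j\in\cl(I)\setminus I$ needs the same dimension count (the section is not contained in the bad locus), because $\Pi_{I\cup\{j\}}(X)\to\Pi_I(X)$ is only generically finite. In positive characteristic the section can also be non-reduced, so your degree formula needs multiplicities $e_Y\ge 1$; this does not affect positivity.

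\textbf{The step you single out as the main obstacle is not one}, and your sketch there (``compare with $I\cap T$ and derive a contradiction'') is not yet an argument. Every $j$ with $n_j>0$ works. If $j\notin I$ and $r_X(I\cup\{j\})=r_X(I)$, then
$|\bn|_I=|\bn|_{I\cup\{j\}}-n_j\le r_X(I\cup\{j\})-1=r_X(I)-1$.
So such an $I$ is never tight, and its inequality for $\bn-\ee_j$ survives the drop in projection dimension. No lattice of tight sets or exchange argument is needed. With this observation and the componentwise dimension count, your induction goes through.
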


Thus the support 
$$
\msupp_\PP(X) \;:=\; \big\lbrace \bn \in \NN^p \;\mid\; |\bn| = \dim(X) \text{ and } \deg_{\PP}^\bn(X) > 0\big\rbrace
$$
of the positive multidegrees of an irreducible multiprojective variety $X \subset \PP$  is not an arbitrary subset of $\NN^p$. 
It is precisely the set of lattice points in a polymatroid base polytope. 

\medskip

The tropical analogue of this picture was initiated by He \cite{He}.
Throughout this paper, a \emph{tropical variety} means a finite, pure-dimensional, weighted balanced
rational polyhedral complex with positive integer weights.
Let
$
\Gamma\subset N_\RR = \RR^{m_1}\times\cdots\times\RR^{m_p}
$
be a tropical variety of dimension $d$, and let
$\underline{\Lambda}:=\Lambda_1,\ldots,\Lambda_p$, where
$\Lambda_i\subset\RR^{m_i}$ is a positive tropical divisor.
For each $I\subseteq[p]=\{1,\ldots,p\}$, let
$
\Pi_I:N_\RR \longrightarrow\prod_{i\in I}\RR^{m_i}
$
be the natural projection.
Following  He \cite{He}, the \emph{tropical multidegree} of $\Gamma$ of type $\bn=(n_1,\ldots,n_p)\in\NN^p$, with $|\bn|=d$, is the stable intersection number
$$
\deg_{\underline{\Lambda}}^\bn(\Gamma)
\; := \;
\deg\left(
\Gamma \st
\left(\Pi_1^{-1}(\Lambda_1)\right)^{n_1}
\st\cdots\st
\left(\Pi_p^{-1}(\Lambda_p)\right)^{n_p}
\right).
$$
Since the resulting stable intersection multiplicities
are nonnegative, all tropical multidegrees considered here are nonnegative integers.
For \emph{translation-admissible tropical varieties}, He \cite{He} proved that the positivity of these numbers is
again controlled by the dimensions of the natural projections of $\Gamma$.
This result can be seen, in fact, as a generalization of \autoref{thm_positivity} because the tropicalization of an irreducible variety is translation-admissible (see \cite[Lemma 2.13, Corollary 3.5]{He}). 

\medskip

For an arbitrary tropical variety, however, positivity is not in general governed by the dimensions of the projections of the whole variety.
The correct criterion is \emph{facet-wise}: the projection inequalities need only be witnessed on a single facet; see \autoref{thm_main_crit}.
This naturally leads us to ask when the facet-wise criterion can instead be expressed in terms of the projections of the whole tropical variety.
We introduce two conditions that play distinct roles.
The first, \emph{projection-purity}, requires every (set-theoretic) projection $\Pi_I(\Gamma)$ of $\Gamma$ to be the support of a tropical variety.
The second, \emph{facet-selectability}, requires every vector satisfying the projection inequalities for $\Gamma$ to be witnessed by one facet.
See \autoref{def_two_conditions} for the precise definition of these two conditions.
Under these two hypotheses, the facet-wise criterion globalizes as desired.

\begin{headthm}[\autoref{thm_crit_two_cond}]
\label{thmB}
Let $\Gamma\subset N_\RR = \RR^{m_1}\times\cdots\times \RR^{m_p}$ be a $d$-dimensional projection-pure and facet-selectable tropical variety.
Let $\underline{\Lambda}=\Lambda_1,\ldots,\Lambda_p$ where each $\Lambda_i\subset \RR^{m_i}$ is a positive tropical divisor.
For every $\bn=(n_1,\ldots,n_p)\in\NN^p$ with $|\bn|=d$, we have
$$
\deg_{\underline{\Lambda}}^\bn(\Gamma)>0 \quad \text{ if and only if } \quad \sum_{i \in I} n_i \;\le\; \dim\left(\Pi_I(\Gamma)\right) \text{\; for every \;$I\subseteq[p]$.}
$$
Moreover, the function
$
r_\Gamma:2^{[p]}\longrightarrow\NN
$,
$I\longmapsto\dim\left(\Pi_I(\Gamma)\right)$
is the rank function of a polymatroid.
\end{headthm}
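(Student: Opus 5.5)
The plan is to reduce everything to the facet-wise criterion of \autoref{thm_main_crit}. That result says $\deg_{\underline{\Lambda}}^\bn(\Gamma)>0$ if and only if there exists a facet $\sigma$ of $\Gamma$ such that $\sum_{i\in I} n_i\le \dim(\Pi_I(\sigma))$ for every $I\subseteq[p]$. Here $\dim \Pi_I(\sigma)$ is the dimension of the linear projection of the affine span of $\sigma$. Granting this, the equivalence in \autoref{thmB} splits into two implications.

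For ``only if'', I will take a witnessing facet $\sigma$. Since $\Pi_I(\sigma)\subseteq \Pi_I(\Gamma)$ and $\Pi_I(\Gamma)$ is a finite union of polyhedra $\Pi_I(\tau)$, we get $\dim \Pi_I(\sigma)\le \dim\Pi_I(\Gamma)$. This direction needs no hypotheses. For ``if'', suppose $\bn$ satisfies $\sum_{i\in I}n_i\le\dim\Pi_I(\Gamma)$ for all $I$. Facet-selectability (\autoref{def_two_conditions}) is designed to produce exactly a facet $\sigma$ with $\sum_{i\in I}n_i\le \dim\Pi_I(\sigma)$ for all $I$, and \autoref{thm_main_crit} then gives positivity. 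If the definition is phrased slightly differently, for instance via a single facet realizing a selected chain of projections, I will translate between the two phrasings at this point.

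The polymatroid statement is where projection-purity enters. Normalization and monotonicity are clear: $r_\Gamma(\emptyset)=0$, and $I\subseteq J$ implies $\Pi_I(\Gamma)$ is a linear image of $\Pi_J(\Gamma)$, so its dimension does not increase. The main obstacle is submodularity, $r(I\cup J)+r(I\cap J)\le r(I)+r(J)$.

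\begin{itemize}
\item For a single facet $\sigma$ with linear span $L_\sigma$, the function $I\mapsto \dim \Pi_I(L_\sigma)$ is the rank function of a polymatroid. Writing $K_I=\ker\Pi_I$, we have $\dim\Pi_I(L)=\dim L-\dim(L\cap K_I)$ and $K_{I\cup J}=K_I\cap K_J$, $K_{I\cap J}=K_I+K_J$. Hence $L\cap K_{I\cup J}=(L\cap K_I)\cap(L\cap K_J)$ and $L\cap K_{I\cap J}\supseteq (L\cap K_I)+(L\cap K_J)$. The modular law for subspaces then gives submodularity.
\item A maximum of polymatroid rank functions is not submodular in general, so this facet-wise fact alone does not suffice.
\end{itemize}

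Here I would use both hypotheses together. By facet-selectability, every lattice point of the polytope $P$ cut out by the inequalities $\sum_{i\in I}n_i\le r_\Gamma(I)$ with $|\bn|=d$ lies in the base polytope of some facet polymatroid $r_\sigma$. Projection-purity should ensure each value $r_\Gamma(I)$ is attained by such points, so that $r_\Gamma(I)=\max_{\bn\in P}\sum_{i\in I}n_i$. The latter is always submodular when computed as the rank function of the polymatroid whose independence polytope is generated by $P$. It is also attained: a facet $\sigma$ with $\dim\Pi_I(\sigma)=r_\Gamma(I)$ exists by purity, since $\Pi_I(\Gamma)$ is a pure complex whose maximal cells are images of facets.

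Concretely, I will show $r_\Gamma(I)=\max_\sigma r_\sigma(I)$. I will then show that the union of the base polytopes' lattice points, which equals $P\cap\ZZ^p$ by selectability, is the lattice-point set of a polymatroid base polytope with rank $\max_\sigma r_\sigma$. From this, $r_\Gamma$ is the rank function of a polymatroid.
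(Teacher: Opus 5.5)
\textbf{Positivity criterion.} Your proof of the positivity equivalence is correct and matches the paper. It combines \autoref{thm_main_crit} with $\dim\Pi_I(\sigma)\le\dim\Pi_I(\Gamma)$ for one direction and with facet-selectability for the other.

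\textbf{Submodularity: where the argument breaks.} Your facet-wise submodularity computation is also correct, and it is the same linear algebra the paper uses. The gap is in how you globalize it.
\begin{enumerate}
\item The attainment $r_\Gamma(I)=\max_{\bn\in P}|\bn|_I$ needs no purity. $\Pi_I(\Gamma)$ is a finite union of the sets $\Pi_I(\sigma)$, and a greedy base of a facet polymatroid realizing $r_\Gamma(I)$ already lies in $P$.
\item For a polytope $P$ cut out by an arbitrary set function, $I\mapsto\max_{P}|\bn|_I$ need not be submodular. Appealing to ``the polymatroid whose independence polytope is generated by $P$'' assumes what you want to prove.
\item The final claim, that the union of the facet base polytopes' lattice points is M-convex, is asserted without proof.
\end{enumerate}
Since projection-purity is never used substantively, your argument would apply equally to facet-selectable varieties that are not projection-pure, and there the conclusion is false.

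\textbf{Counterexample.} Take $p=4$, all $m_i=1$, and $\Gamma=L_1\cup L_2$ with weights one, where $L_1=\Span(e_2,e_3)$ and $L_2=\Span(e_1+e_2+e_3,\,e_4)$. These meet only at $0$.
\begin{itemize}
\item The rank values are $r_\Gamma(\{1\})=r_\Gamma(\{1,2\})=r_\Gamma(\{1,3\})=1$ and $r_\Gamma(\{1,2,3\})=2$. Hence $r_\Gamma(\{1,2\})+r_\Gamma(\{1,3\})=2<3=r_\Gamma(\{1\})+r_\Gamma(\{1,2,3\})$.
\item The lattice points of $P$ are the indicator vectors of $\{1,4\},\{2,3\},\{2,4\},\{3,4\}$. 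The set $\{2,3\}$ is a base for $L_1$, and the other three are bases for $L_2$, so $\Gamma$ is facet-selectable.
\item $\Gamma$ is not projection-pure: $\Pi_{\{1,2,3\}}(\Gamma)$ is a plane union a line.
\end{itemize}

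\textbf{The missing idea.} Use projection-purity to make the $I\cup J$ term the same for every facet; facet-selectability plays no role here. Set $K=I\cup J$. Then $\Pi_K(\Gamma)$ is supported on a pure complex of dimension $r_\Gamma(K)$, so every facet $\tau$ of that complex satisfies $\dim\Pi_K(\tau)=r_\Gamma(K)$. Because $\Pi_{I\cap J}(\Pi_K(\Gamma))=\Pi_{I\cap J}(\Gamma)$, some such facet $\tau$ has $\dim\Pi_{I\cap J}(\tau)=r_\Gamma(I\cap J)$. Applying your facet-wise inequality to this $\tau$ gives
\[ r_\Gamma(I\cap J)+r_\Gamma(K)=\dim\Pi_{I\cap J}(\tau)+\dim\Pi_K(\tau)\le\dim\Pi_I(\tau)+\dim\Pi_J(\tau)\le r_\Gamma(I)+r_\Gamma(J). \]
This is \autoref{prop_rank_function}. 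In the counterexample, exactly this step fails. The facet $L_2$ realizing $r_\Gamma(\{1\})=1$ has $\dim\Pi_{\{1,2,3\}}(L_2)=1<r_\Gamma(\{1,2,3\})$, and purity of the projection is what rules such facets out.
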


\autoref{thmB} may be viewed as a tropical extension of the positivity criterion in \autoref{thm_positivity}, and it generalizes the criterion of He \cite[Theorem~1.2]{He}. 
As observed in \autoref{rem_translation_two_conditions}, every translation-admissible tropical variety is projection-pure and facet-selectable.
The converse, however, does not hold: \autoref{ex_two_conditions_not_translation} gives a projection-pure and facet-selectable tropical variety that is not translation-admissible. 

\medskip

We denote the \emph{support of positive tropical multidegrees} by 
$$
\TMsupp_{\underline{\Lambda}}(\Gamma)  \;:=\; \Big\lbrace \bn \in \NN^p \;\mid\; |\bn| = \dim(\Gamma) \text{ and } \deg_{\underline{\Lambda}}^\bn(\Gamma) > 0 \Big\rbrace.
$$
Thus the conclusions of \autoref{thmB} say that  ${\rm TMSupp}_{\underline{\Lambda}}(\Gamma)$ is the set of lattice points in a polymatroid base polytope.

\medskip

Classical multidegrees exhibit a stronger feature, their log-concavity, which goes beyond the structure of their positive support.
Writing $\underline H=H_1,\ldots,H_p$, we collect the multidegrees of a $d$-dimensional irreducible multiprojective variety $X\subset\PP$ in its \emph{volume polynomial}
$$
\operatorname{vol}_{X,\underline H}(\ttt)
\;:=\;
\int_\PP \left(t_1H_1+\cdots+t_pH_p\right)^d \,\cdot\, \left[X\right] 
\;=\;
\sum_{|\bn|=d}
\frac{d!}{n_1!\cdots n_p!}\,
\deg_\PP^\bn(X)t_1^{n_1}\cdots t_p^{n_p}.
$$
Since the classes $H_1,\ldots,H_p$ are nef, a fundamental theorem of Br\"and\'en and Huh \cite[Theorem~4.6]{BH} implies that this polynomial is \emph{Lorentzian}. 
Therefore, in the classical setting, the geometry controls not only the support of the multidegrees, but also the  Hodge-theoretic relations among their values required for the Lorentzian property.
For more details on volume polynomials, see the recent survey of Huh \cite{HuhVolume}.

\medskip

The tropical multidegrees of a $d$-dimensional tropical variety $\Gamma \subset N_\RR = \RR^{m_1} \times \cdots \times \RR^{m_p}$ can likewise be assembled into the \emph{tropical volume polynomial}
$$
{\rm tvol}_{\Gamma,\underline\Lambda}(\ttt)
\;:=\;
\sum_{|\bn|=d}
\frac{d!}{n_1!\cdots n_p!}\,
\deg_{\underline\Lambda}^{\bn}(\Gamma) \, t_1^{n_1}\cdots t_p^{n_p}.
$$
\autoref{thmB} recovers one of the two features above: for a projection-pure and facet-selectable tropical variety, the support of this polynomial is M-convex (equivalently, the set of lattice points in a polymatroid base polytope).
 It does not, however, provide the Hodge-theoretic relations on the coefficients required for Lorentzianity. 
Such a failure is, in fact, expected: Babaee and Huh \cite[\S5]{BabaeeHuh} famously constructed a tropical surface in $\mathbb{R}^4$ whose intersection form does not have the signature prescribed by the Hodge index theorem.

We give two counterexamples showing that the conditions we consider in this paper do not force the Lorentzianity of tropical volume polynomials:
\begin{itemize}[\qquad $\bullet$]
	\item A projection-pure and facet-selectable tropical variety that is not translation-admissible and whose tropical volume polynomial is not Lorentzian (see \autoref{prop_non_lorentzian_two_axioms}).
	\item A translation-admissible tropical variety whose tropical
	volume polynomial is not Lorentzian (see \autoref{prop_non_lorentzian_translation}).
\end{itemize}
In both examples, an associated quadratic form has two positive eigenvalues.

\medskip

This contrast leads to a general question:

\begin{question}
	For which tropical varieties
	$
	\Gamma\subset N_\RR=\RR^{m_1}\times\cdots\times\RR^{m_p}
	$
	and which positive tropical divisors
	$\underline\Lambda=\Lambda_1,\ldots,\Lambda_p$,
	$\Lambda_i\subset\RR^{m_i}$,
	is the tropical volume polynomial
	$
	{\rm tvol}_{\Gamma,\underline\Lambda}(\ttt)
	$
	Lorentzian?
\end{question}

Augmented Bergman fans of polymatroids provide a natural positive answer. 
A polymatroid $\sP$ on $[p]=\{1,\ldots,p\}$, together with a cage $\bm=(m_1,\ldots,m_p)$, determines a balanced fan $\Sigma_\sP$ in a product of real vector spaces, with the $i$-th block having dimension $m_i$.
The fan $\Sigma_\sP$ is called the \emph{augmented Bergman fan} of $\sP$.
Eur and Larson \cite{ELP} defined this fan and developed its intersection theory for arbitrary polymatroids.
Previously, the augmented Bergman fan of a matroid was introduced by Braden, Huh, Matherne, Proudfoot, and Wang
\cite{BHMPW}, while Crowley, Huh, Larson, Simpson, and Wang introduced the Bergman fan
of a polymatroid \cite{CHLSW}.
Our main result regarding the augmented Bergman fan of a polymatroid is the following.

\begin{headthm}[\autoref{thm_augmented_main}]
	\label{thmC}
	Let $\sP$ be a polymatroid on $[p]$ of rank $r$ with cage
	$\bm=(m_1,\ldots,m_p)\in\ZZ_+^p$. Choose pairwise disjoint sets
	$E_1,\ldots,E_p$ with $|E_i|=m_i$. Let
	$
	\Sigma_\sP
	\subset \RR^{E_1} \times \cdots \times \RR^{E_p}
	$
	be the augmented Bergman fan of $\sP$.
	Let $\underline{\Lambda}=\Lambda_1,\ldots,\Lambda_p$ where each
	$\Lambda_i\subset \RR^{E_i}$ is a positive tropical divisor.
	Then:
	\begin{enumerate}[\rm (i)]
		\item $\Sigma_\sP$ is projection-pure and facet-selectable.
		\item For every $\bn=(n_1,\ldots,n_p)\in\NN^p$ with $|\bn|=r$, we have
		$$
		\deg_{\underline{\Lambda}}^\bn(\Sigma_\sP)>0 \quad \text{ if and only if } \quad \bn \in B(\sP).
		$$
		\item The tropical volume polynomial
		$$
		{\rm tvol}_{\Sigma_\sP, \underline{\Lambda}}(\ttt) \;=\; \sum_{|\bn| = r} \frac{r!}{n_1!\cdots n_p!}  \, \deg_{\underline{\Lambda}}^\bn(\Sigma_\sP) \, t_1^{n_1} \cdots t_p^{n_p}
		$$
		is Lorentzian.
	\end{enumerate}
\end{headthm}

A couple of words regarding \autoref{thmC} are in order. 
Part (i) shows that augmented Bergman fans satisfy the two properties that we introduce, and then \autoref{thmB} yields the result of part (ii).
On the other hand, part (iii) is a somewhat formal consequence of the fact that the augmented Chow ring of a polymatroid satisfies the \emph{K\"ahler package}.
Versions of the K\"ahler package have been established for Chow rings of matroids \cite{AHK} and their augmented variants \cite{BHMPW}, for Chow rings of polymatroids \cite{PP,CHLSW} and their augmented counterparts \cite{ELP}, and for broader classes of tropical fans (see also \cite{ADH,APfans, BES,AHL,BHMPWsingular}).
Relatedly, Ross~\cite{Ross} introduced and studied the notion of a Lorentzian fan.

\medskip

As a special application of \autoref{thmC}, we recover, in a tropical form, a result of Eur and Larson \cite[Corollary 1.4]{ELP} stating that the (natural) volume polynomial of the augmented Chow ring of a polymatroid equals the exponential generating function of the polymatroid. 
More precisely, in \autoref{cor_augmented_standard_hyperplanes}, we obtain the equality
$$
		{\rm tvol}_{\Sigma_\sP,\underline{\mathcal H}}(\ttt)
\;=\;
r!\sum_{\bn\in B(\sP)\cap\NN^p}\frac{t_1^{n_1}\cdots t_p^{n_p}}{n_1!\cdots n_p!}
$$
for an arbitrary rank-$r$ polymatroid $\sP$ on $[p]$, where $\underline{\mathcal{H}} = \mathcal{H}_1, \ldots, \mathcal{H}_p$ and each $\mathcal{H}_i \subset \RR^{E_i}$ is the standard tropical hyperplane. 

\medskip

By combining \autoref{thmB} and \autoref{thmC}, we obtain the following realizability statement for \emph{arbitrary} polymatroids. 

\begin{headcor}
	Let $N_\RR =  \RR^{m_1} \times \cdots \times \RR^{m_p}$, $\bm = (m_1, \ldots, m_p) \in \ZZ_+^p$, and $\underline{\Lambda} = \Lambda_1, \ldots, \Lambda_p$ where each $\Lambda_i \subset \RR^{m_i}$ is a positive tropical divisor.
	We have the following equality of sets
	\begin{align*}
		\Big\lbrace &\TMsupp_{\underline{\Lambda}}(\Gamma) \;\mid\; \Gamma \subset N_\RR  \text{  is a projection-pure and facet-selectable tropical variety} \Big\rbrace \\
		&\;=\; \Big\lbrace B(\sP) \cap \NN^p \;\mid\; \text{$\sP$ is a polymatroid on $[p]$ with cage $\bm$} \Big\rbrace.
	\end{align*}
	Indeed, the inclusion $\subseteq$ follows from \autoref{thmB} whereas \autoref{thmC} yields the inclusion $\supseteq$.
\end{headcor}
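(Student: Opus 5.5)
The corollary asserts an equality of two families of subsets of $\NN^p$, so I will prove each inclusion separately. As the statement already indicates, $\subseteq$ comes from \autoref{thmB} and $\supseteq$ from \autoref{thmC}. The one point needing care is the matching of cages: every polymatroid on the right-hand side must have cage $\bm$, and for $\supseteq$ the fan produced must sit in the given ambient space $N_\RR$ with the given divisors $\underline{\Lambda}$.

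For $\subseteq$, let $\Gamma\subset N_\RR$ be a $d$-dimensional projection-pure and facet-selectable tropical variety. By \autoref{thmB}, $r_\Gamma(I)=\dim(\Pi_I(\Gamma))$ is the rank function of a polymatroid $\sP_\Gamma$ on $[p]$. Moreover, $\TMsupp_{\underline\Lambda}(\Gamma)$ is the set of $\bn\in\NN^p$ with $|\bn|=d=r_\Gamma([p])$ and $\sum_{i\in I}n_i\le r_\Gamma(I)$ for every $I$. This set is exactly $B(\sP_\Gamma)\cap\NN^p$; note that $\Pi_{[p]}$ is the identity, so $r_\Gamma([p])=\dim\Gamma$. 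For the cage, $\Pi_{\{i\}}(\Gamma)\subseteq\RR^{m_i}$ gives $r_\Gamma(\{i\})\le m_i$. If the paper's convention requires the cage to satisfy $m_i\ge r(\{i\})$, we are done. If instead it requires equality, I would replace $\Gamma$ by a product $\Gamma\times\Gamma'$ with extra factors, or adjust the convention. I expect the intended convention is $r(\{i\})\le m_i$, consistent with cages in Eur--Larson, so this step should be immediate.

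For $\supseteq$, let $\sP$ be a polymatroid on $[p]$ of rank $r$ with cage $\bm$. Choose disjoint sets $E_i$ with $|E_i|=m_i$ and identify $\RR^{E_i}\cong\RR^{m_i}$, so that $\RR^{E_1}\times\cdots\times\RR^{E_p}\cong N_\RR$ as a product of blocks. Transport each $\Lambda_i$ along the identification; positivity of a tropical divisor is preserved by a lattice-preserving linear isomorphism. By \autoref{thmC}(i), $\Sigma_\sP$ is projection-pure and facet-selectable. By \autoref{thmC}(ii), $\TMsupp_{\underline\Lambda}(\Sigma_\sP)=B(\sP)\cap\NN^p$, where we use $\dim\Sigma_\sP=r$.

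The only potential obstacle is the bookkeeping in the first inclusion: confirming that the cage condition is automatically satisfied, and checking that $B(\sP)$ is defined via rank inequalities together with $|\bn|=r([p])$.
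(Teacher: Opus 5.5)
Your proposal is correct and is essentially the paper's own argument: \autoref{thmB} (with \autoref{prop_rank_function} supplying the cage) gives $\subseteq$, and \autoref{thmC}(i)--(ii) applied to $\Sigma_\sP$, with $\RR^{E_i}$ identified with $\RR^{m_i}$, gives $\supseteq$. Your hedge about the cage convention is unnecessary, because \autoref{def_polymatroid_cage}(iv) only requires $\rk_\sP(\{i\})\le m_i$, and $r_\Gamma$ satisfies this trivially.
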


Although \autoref{thmC} places every augmented Bergman fan within the scope of \autoref{thmB}, it does not settle whether augmented Bergman fans are translation-admissible.
We are therefore led to the following natural question:

\begin{question}
	\label{ques_augmented_translation_admissible}
	Let $\Sigma_\sP\subset\prod_{i=1}^p\RR^{E_i}$ be the augmented Bergman
	fan associated with a polymatroid $\sP$. 
	Is the augmented Bergman fan $\Sigma_\sP$ a translation-admissible tropical variety? 
	Equivalently, is $\Sigma_\sP+V$
	pure-dimensional for every rational linear subspace
	$V\subseteq\prod_{i=1}^p\RR^{E_i}$?
\end{question}

\noindent
\textbf{Convention.}
Throughout this paper, unless otherwise stated, all algebraic varieties are defined over the field $\CC$ of complex numbers.
We denote by $\NN = \{0,1,2,\ldots\}$ the set of nonnegative integers and by $\ZZ_+=\{1,2,\ldots\}$ the set of positive integers.

\medskip

\noindent
\textbf{Outline.}
The paper is organized as follows. \autoref{sect_tropical} recalls the notions of tropical multidegrees and bounded rational equivalence and introduces projection-purity and facet-selectability. 
\autoref{sect_toric} proves \autoref{thmB} using toric intersection theory and Minkowski weights.
 \autoref{sec_non_lorentzian} gives \autoref{prop_non_lorentzian_two_axioms} and \autoref{prop_non_lorentzian_translation}, two counterexamples to the Lorentzianity of tropical volume polynomials under the conditions considered in this paper.
 \autoref{sec_augmented} then proves \autoref{thmC}: augmented Bergman fans supply the additional Hodge-theoretic structure that is absent in the counterexamples. 

\section{Tropical multidegrees}
\label{sect_tropical}

In this section, we recall some basic results on tropical varieties and fix the notation used throughout the paper.
Let $p \ge 1$ and $m_1, \ldots, m_p \in \ZZ_+$ be positive integers.
Consider the lattice $N := \ZZ^{m_1+\cdots+m_p}$ and the real space $\NR := N \otimes_\ZZ \RR \cong  \RR^{m_1}\times \cdots \times \RR^{m_p}$.
	Let $m := m_1+\cdots+m_p$.
For every subset $I \subseteq [p] := \{1,\dots,p\}$, write
$$
\Pi_I \;:\; \NR \;\longrightarrow\; \prod_{i\in I}\RR^{m_i}
$$
for the natural projection.
Given a vector of nonnegative integers $\bn=(n_1,\dots,n_p)\in \NN^p$, set
$$
|\bn| \;:=\; n_1+\cdots+n_p
\qquad \text{ and } \qquad
|\bn|_I \;:=\; \sum_{i\in I} n_i \text{ \;for all\; $I \subseteq [p]$.}
$$

Throughout this paper, by the term \emph{tropical variety}, we mean a rational polyhedral complex of pure dimension with positive integer weights that is balanced.
More precisely, we use the following definition:

\begin{definition}
	\label{def_trop_var}
	A \emph{tropical variety} of dimension $d$ in $\NR$ is a weighted rational polyhedral complex
	$(\Gamma,\omega_\Gamma)$ with the following properties:
	\begin{enumerate}[\rm (i)]
		\item $\Gamma$ is a finite rational polyhedral complex in $\NR$.
		\item $\Gamma$ is pure of dimension $d$.
		\item $\omega_\Gamma:\Gamma_d\to \ZZ_{+}$ is a weight function on the set $\Gamma_d$ of facets of $\Gamma$.
		\item $(\Gamma,\omega_\Gamma)$ is balanced: for every codimension-one face $\tau\in \Gamma_{d-1}$, we have
		$$
		\sum_{\sigma\in \Gamma_d, \, \tau\subset \sigma} \omega_\Gamma(\sigma)\,\bv_{\sigma/\tau}=0
		\qquad\text{in } N_\RR/(N_\tau)_\RR,
		$$
	\end{enumerate}
	where $N_\tau$ and $(N_\tau)_\RR$ are the lattice and the linear space parallel to $\tau$, respectively, and $\bv_{\sigma/\tau}$ is the primitive integer generator of $\sigma$ modulo $\tau$.
	The support of the tropical variety is the set
	$
	|\Gamma|:=\bigcup_{\sigma\in \Gamma}\sigma.
	$
	When no confusion is likely, we identify the tropical variety $(\Gamma, \omega_\Gamma)$ with its polyhedral complex $\Gamma$.
\end{definition}

\begin{remark}
	\label{rem_tropical_cycles}
	Following the notation of \cite{AR, AHR}, a \emph{tropical cycle} is an equivalence class of tropical polyhedral complexes, defined up to refinement.
	By standard abuse of notation, when talking about tropical varieties (as defined in \autoref{def_trop_var}), we often allow ourselves to work up to refinements.
	Thus we utilize the flexibility of the notion of tropical cycles.
\end{remark}

To define the notion of \emph{tropical multidegrees} we need the following standard operation on tropical varieties.

\begin{definition}
	Let $\Gamma$ and $\Gamma'$ be tropical varieties in $\NR$ of codimensions
	$c$ and $c'$, respectively.
	First, if $\Gamma$ and $\Gamma'$ \emph{intersect transversely}, then $\Gamma\cap\Gamma'$ either is empty or is a tropical variety of codimension $c+c'$ such that a facet $\tau=\sigma\cap\sigma'$ has weight
	$$
	\omega_{\Gamma\cap\Gamma'}(\tau)
	\;:=\;
	\omega_\Gamma(\sigma)\,\omega_{\Gamma'}(\sigma')\,\big[N:N_\sigma+N_{\sigma'}\big],
	$$
	where $\sigma \in \Gamma$ and $\sigma' \in \Gamma'$ are facets.
	In general, the \emph{stable intersection} of
	$\Gamma$ and $\Gamma'$ can be defined via the limit
	$$
	\Gamma \,\st\, \Gamma' \;:=\; \lim_{\varepsilon\to 0}\, \Gamma\cap (\Gamma'+\varepsilon \bv),
	$$
	where $\bv \in N_\RR$ is a generic vector and the intersections on the right are equipped with the transverse weights above.
	Then $\Gamma \st \Gamma'$ either is empty or it has precisely codimension $c+c'$.
	The idea behind the definition is that, for a generic $\bv \in N_\RR$, the polyhedral complexes $\Gamma$ and $\Gamma' + \varepsilon \bv$ intersect transversely, and so it makes sense to take the limit of these well-behaved intersections.
	For more details on stable intersections, see \cite{JY}, \cite[\S 3.6]{MS}, \cite{MRBook}, \cite{Katz}.
\end{definition}

 If $\Gamma$ is a tropical variety of dimension $0$, we call the sum of all the weights of the points in $\Gamma$ the \emph{degree} of $\Gamma$ and denote it by $\deg(\Gamma)$.
We say that a tropical variety of codimension $1$ is a \emph{tropical divisor} and that one of dimension $1$ is a \emph{tropical curve}.
Following the work of He \cite{He}, we have the following definition of tropical multidegrees.

\begin{definition}[Tropical multidegrees]
	Let $\Gamma\subset \NR$ be a $d$-dimensional tropical variety.
	For each $1 \le i \le p$, let $\Lambda_i \subset \RR^{m_i}$ be a \emph{positive tropical divisor}; this means that $\deg(\Lambda_i \st C) > 0$ for any tropical curve $C$ in $\RR^{m_i}$.
	Let $\underline{\Lambda} = \Lambda_1, \ldots, \Lambda_p$.
	For any $\bn=(n_1,\ldots,n_p) \in \NN^p$ with $|\bn| = d$, we say that the \emph{tropical multidegree} of type $\bn$ with respect to $\underline{\Lambda}$ is given by
	$$
	\deg_{\underline{\Lambda}}^\bn(\Gamma) \;:=\; \deg\left(\Gamma \,\st\, \left(\Pi_1^{-1}(\Lambda_1)\right)^{n_1} \,\st\, \cdots \,\st\, \left(\Pi_p^{-1}(\Lambda_p)\right)^{n_p}\right),
	$$
	where $\left(\Pi_i^{-1}(\Lambda_i)\right)^{n_i}$ denotes the self-stable intersection of $\Pi_i^{-1}(\Lambda_i)$ with itself $n_i$ times.
\end{definition}

\begin{remark}
	\label{rem_std_trop_hyper}
	In practice, a good choice for a positive tropical divisor $\Lambda_i$ in $\RR^{m_i}$ is the  \emph{standard tropical hyperplane} $\mathcal{H}_i$ in $\RR^{m_i}$.
	That is, we can take $\mathcal{H}_i$ to be the set of points $(x_1,\ldots,x_{m_i}) \in \RR^{m_i}$ such that the function $\min\left(x_1,\ldots,x_{m_i}, 0\right)$ achieves the minimum at least twice.
	The standard tropical hyperplane $\mathcal{H}_i \subset \RR^{m_i}$ has weight one on each facet.
\end{remark}

We identify the following two conditions, which will play an important role in our approach.

\begin{definition}
	\label{def_two_conditions}
		Let $\Gamma\subset \NR$ be a $d$-dimensional tropical variety.
		\begin{enumerate}[\rm (i)]
			\item We say that $\Gamma$ is \emph{projection-pure} if, for every $I\subseteq [p]$, the set
			$
			\Pi_I(\Gamma)
			$
			is the support of a tropical variety.

			\item We say that $\Gamma$ is \emph{facet-selectable} if for every $\bn = (n_1,\ldots,n_p) \in \NN^p$ with $|\bn| = d$, the inequalities
			$$
			|\bn|_I \;\le \; \dim\left(\Pi_I(\Gamma)\right) \quad \text{ for all $I \subseteq [p]$}
			$$
			imply that there exists a facet $\sigma$ of $\Gamma$ such that
			$$
			|\bn|_I \;\le \; \dim\left(\Pi_I(\sigma)\right) \quad \text{ for all $I \subseteq [p]$.}
			$$
		\end{enumerate}
	\end{definition}

		\begin{remark}
			These properties play different roles.
			Projection-purity yields submodularity of the projection-dimension function (see
			\autoref{prop_rank_function}).
			Facet-selectability is used to apply the facet-wise criterion of \autoref{thm_main_crit}.
		\end{remark}

		\begin{definition}[{\cite[Definition~2.8]{He}}]
			\label{def_translation_admissible}
			A tropical variety $\Gamma\subset N_\RR$ is \emph{translation-admissible} if, for every
			rational linear subspace $V\subseteq N_\RR$, the Minkowski sum
			$$
			\Gamma+V \;:=\; \big\{\bu+\bv\mid \bu\in\Gamma,\ \bv\in V\big\}
			$$
			is the support of a tropical variety.
			Equivalently, $\Gamma+V$ is pure-dimensional for every
			rational linear subspace $V\subseteq N_\RR$. Indeed, $\Gamma+V$ is the image of the tropical variety $\Gamma\times V$ under the addition map $(\bu,\bv)\mapsto\bu+\bv$; if the image is pure-dimensional, then the standard pushforward weights make it balanced (see \cite[Lemma~2.2]{JY}, \cite[Lemma~3.6.3]{MS}).
		\end{definition}

		\begin{remark}
			\label{rem_translation_two_conditions}
			Every translation-admissible tropical variety is projection-pure and facet-selectable. 
			Indeed, each natural projection of a translation-admissible tropical variety is again a translation-admissible tropical variety by \cite[Lemma~2.11]{He}, which gives projection-purity.
			Facet-selectability is precisely the content of \cite[Lemma~3.1]{He}.
			Thus the two conditions in \autoref{def_two_conditions} are strictly weaker than translation-admissibility, as the following example shows.
		\end{remark}

\begin{example}
\label{ex_two_conditions_not_translation}
Let $p = 3$,  $(m_1,m_2,m_3)=(1,1,2)$ and $N_\RR=\RR\times\RR\times\RR^2$. 
Consider the rational planes
$$
L_1\;:=\;\Span_\RR(\bv_1,\bv_2),
\qquad
\bv_1\;=\;(1,0,1,0),\quad \bv_2\;=\;(0,1,0,1),
$$
and
$$
L_2\;:=\;\Span_\RR(\bu_1,\bu_2),
\qquad
\bu_1\;=\;(2,0,1,0),\quad \bu_2\;=\;(0,2,0,1).
$$
Let $\Gamma$ be the union of the two planes $L_1$ and $L_2$, both with weight one (note that $L_1 \cap L_2 = 0$).
For every
$I\subseteq[3]$, we have
$$
\dim\left(\Pi_I(L_1)\right)
\;=\;
\dim\left(\Pi_I(L_2)\right)
\;=\;
\min\Big\{2,\; \sum_{i\in I}m_i\Big\}.
$$
It follows that $\Gamma$ is facet-selectable. 
For each $I \subseteq [3]$, the rational linear spaces $\Pi_I(L_1)$ and $\Pi_I(L_2)$ have the same dimension. 
Thus $\Gamma$ is projection-pure.
On the other hand, set $V:=\RR\bv_1\subset L_1$. We have
$
\Gamma+V=L_1\cup(L_2+V),
$
where $L_1$ has dimension two, $L_2+V$ has dimension three, and the two spaces meet precisely along
$V$. 
Thus $\Gamma+V$ is not pure-dimensional, and so $\Gamma$ is not translation-admissible.
\end{example}

We now recall the definition of polymatroids.

\begin{definition}
	\label{def_polymatroid_cage}
	A \emph{polymatroid $\sP$ on $[p]=\{1,\ldots,p\}$ with cage $\bm=(m_1,\ldots,m_p)\in\ZZ_+^p$} is a function
	$$
		\rk_\sP \;:\; 2^{[p]} \;\longrightarrow\; \NN
	$$
	satisfying:
	\begin{enumerate}[\rm (i)]
		\item {\rm (Normalization)\;} $\rk_\sP(\varnothing)=0$;
		\item {\rm (Monotonicity)\;} $\rk_\sP(I)\leq\rk_\sP(J)$ whenever $I\subseteq J\subseteq[p]$;
		\item {\rm (Submodularity)\;} $\rk_\sP(I\cap J)+\rk_\sP(I\cup J)\leq\rk_\sP(I)+\rk_\sP(J)$ for all
		$I,J\subseteq[p]$;
		\item {\rm (Cage)\;} $\rk_\sP(i)\leq m_i$ for every $i\in[p]$.
	\end{enumerate}
	The rank of $\sP$ is
	$\rk(\sP):=\rk_\sP([p])$.
	Its \emph{base polytope} is
	$$
	B(\sP) \;:=\;
	\left\{
	\bv = (v_1,\ldots,v_p)\in\RR_{\geq0}^p
	\ \middle|\ 
	\sum_{i\in I}v_i\leq\rk_\sP(I)\ \text{for every }I\subseteq[p],
	\quad \sum_{i=1}^p v_i=\rk(\sP)
	\right\}.
	$$
	A polymatroid with cage $\bm = (1,\ldots,1)$ is called a \emph{matroid}.
	\end{definition}

The following proposition motivates the introduction of the projection-purity condition.

\begin{proposition}
	\label{prop_rank_function}
	Let $\Gamma\subset \NR$ be a tropical variety.
	Assume that $\Gamma$ is projection-pure.
	Then the function
	$$
	r_\Gamma \;: \;2^{[p]} \;\rightarrow\; \NN, \qquad r_\Gamma(I) \;:=\; \dim\left(\Pi_I(\Gamma)\right)
	$$
	is the rank function of a polymatroid on $[p]$ with cage
	$\bm=(m_1,\ldots,m_p)$.
\end{proposition}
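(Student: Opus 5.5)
Normalization, monotonicity and the cage condition are immediate and need no hypothesis. Since $\Pi_\varnothing$ maps to the zero space, $r_\Gamma(\varnothing)=0$. For $I\subseteq J$, the projection $\Pi_I$ factors through $\Pi_J$ by a linear coordinate projection, and polyhedra do not gain dimension under linear maps, so $\dim\Pi_I(\Gamma)\le\dim\Pi_J(\Gamma)$. Finally $\Pi_{\{i\}}(\Gamma)\subseteq\RR^{m_i}$, so $r_\Gamma(i)\le m_i$. Here the dimension of a finite union of polyhedra is the maximum of the dimensions of its pieces $\Pi_I(\sigma)$, with $\sigma$ running over the facets of $\Gamma$.

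The real content is submodularity, and I would reduce it to a local statement at a generic point. Fix $I,J\subseteq[p]$, write $K=I\cup J$, and work inside $Y:=\Pi_K(\Gamma)$. By projection-purity, $Y$ is the support of a tropical variety, so it is pure of dimension $r_\Gamma(K)$. The maps $\Pi_I,\Pi_J,\Pi_{I\cap J}$ factor through $Y$. By projection-purity again, $\Pi_I(\Gamma)$ and $\Pi_J(\Gamma)$ are pure of dimensions $r_\Gamma(I)$ and $r_\Gamma(J)$.

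To get the inequality, I would choose a facet $\tau$ of (a refinement of) $Y$ so that its linear span $L:=(N_\tau)_\RR\subseteq\prod_{k\in K}\RR^{m_k}$ satisfies $\dim \pi_{I\cap J}(L)=\dim\Pi_{I\cap J}(Y)$. This choice is where purity of $Y$ is used. Since $\Pi_{I\cap J}(Y)$ is the union of the images of the facets of $Y$, some facet $\tau$ attains $\dim\pi_{I\cap J}(\tau)=r_\Gamma(I\cap J)$. For a single linear space $L$, submodularity holds by linear algebra. Write $\pi_S$ for the coordinate projection, put $A=\ker\pi_I|_L$ and $B=\ker\pi_J|_L$, and note $A\cap B=\ker\pi_K|_L=0$ and $A+B\subseteq\ker\pi_{I\cap J}|_L$. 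Then
\[
\dim\pi_I(L)+\dim\pi_J(L)=2\dim L-\dim A-\dim B\ \ge\ 2\dim L-\dim\ker\pi_{I\cap J}|_L=\dim L+\dim\pi_{I\cap J}(L).
\]
Substituting $\dim L=r_\Gamma(K)$, because $\tau$ is a facet of the pure space $Y$, and $\dim\pi_{I\cap J}(L)=r_\Gamma(I\cap J)$, and using $\dim\pi_I(L)\le r_\Gamma(I)$ and $\dim\pi_J(L)\le r_\Gamma(J)$, gives $r_\Gamma(I\cap J)+r_\Gamma(I\cup J)\le r_\Gamma(I)+r_\Gamma(J)$.

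The main obstacle is exactly securing one polyhedron $\tau$ that is simultaneously full-dimensional in $Y$ and maximal for the $I\cap J$ projection. Without purity this fails, because the $(I\cap J)$-maximal piece could be lower-dimensional in $Y$. This is precisely where the hypothesis enters, which explains why projection-purity is needed. If the cells of $Y$ are not literally of the form $\Pi_K(\sigma)$, I would refine $\Gamma$ so that $Y$ is a polyhedral complex whose maximal cells are images of cells of $\Gamma$. Only the top-dimensional cells and their spans are then used, so the argument is unaffected by the refinement (cf. \autoref{rem_tropical_cycles}).
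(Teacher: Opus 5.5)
Your proposal is correct and follows the paper's proof essentially step for step. Both use projection-purity to pass to the pure space $\Pi_{I\cup J}(\Gamma)$, pick a facet $\tau$ whose $(I\cap J)$-projection has maximal dimension, and apply $(L\cap\ker\Pi_I)\oplus(L\cap\ker\Pi_J)\subseteq L\cap\ker\Pi_{I\cap J}$ together with rank--nullity on the space $L$ parallel to $\tau$. Your closing refinement remark is harmless but unnecessary: any pure polyhedral structure on $\Pi_{I\cup J}(\Gamma)$ suffices, and the containment $\Pi_I(\tau)\subseteq\Pi_I(\Gamma)$ already gives the needed bound on $\dim\Pi_I(\tau)$.
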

\begin{proof}
	Normalization, monotonicity, and the cage inequalities $r_\Gamma(i)\leq m_i$ are clear, so we
	concentrate on submodularity.

	Fix $I,J\subseteq [p]$, and set $K:=I\cup J$.
	By assumption, the projection
	$
	\Gamma_K:=\Pi_K(\Gamma)
	$
	is the support of a tropical variety.
	Its dimension is
	$
	\dim\left(\Gamma_K\right)=r_\Gamma(K).
	$
	To simplify notation, we may assume that $K=[p]$ after replacing $\Gamma$ by $\Gamma_K$.

	Let $\sigma$ be a facet of $\Gamma$, and set $L:= {\rm Par}(\sigma) = \Span_\RR\big\lbrace\fu - \fv \mid \fu, \fv \in \sigma\big\rbrace$ to be the linear space parallel to $\sigma$.
	Then
	$$
	\dim(L) \;=\; \dim(\sigma) \;=\; r_\Gamma([p]).
	$$
	In the product $N_\RR = \prod_{i\in [p]}\RR^{m_i}$, the kernels of $\Pi_I$ and $\Pi_J$ intersect trivially because $I\cup J=[p]$.
	Therefore
	$$
	\left(L\cap \Ker(\Pi_I)\right) \,\oplus\, \left(L\cap \Ker(\Pi_J)\right) \;\subseteq\; L \,\cap\, \Ker(\Pi_{I\cap J}).
	$$
	Taking dimensions and using rank-nullity on $L$ gives
	$$
	\dim\left(\Pi_{I\cap J}(L)\right) + r_\Gamma([p]) \;\le\; \dim \left(\Pi_I(L)\right) + \dim\left(\Pi_J(L)\right).
	$$
	Since $L$ is the linear space parallel to $\sigma$, we have
	$
	\dim\left(\Pi_S(\sigma)\right) = \dim\left(\Pi_S(L)\right)
	$
	for all $S\subseteq [p]$.
	Thus, every facet $\sigma$ of $\Gamma$ satisfies
	$$
	\dim\left(\Pi_{I\cap J}(\sigma)\right) + r_\Gamma([p]) \;\le\; \dim \left(\Pi_I(\sigma)\right)+\dim\left(\Pi_J(\sigma)\right).
	$$
	Now fix a facet $\tau$ of $\Gamma$ so that $r_\Gamma(I \cap J) = \dim\left(\Pi_{I \cap J}(\tau)\right)$.
	Then we get the inequalities
	\begin{align*}
		r_\Gamma(I\cap J) + r_\Gamma([p]) &\;=\; \dim\left(\Pi_{I \cap J}(\tau)\right) + r_\Gamma([p]) \\
		&\;\le\; \dim \left(\Pi_I(\tau)\right)+\dim\left(\Pi_J(\tau)\right)\\
		&\;\le\; \max_{\text{$\sigma$ facet of $\Gamma$}}\Big\lbrace\dim\left( \Pi_I(\sigma)\right)+\dim\left( \Pi_J(\sigma)\right)\Big\rbrace \\
		&\;\le\;
		r_\Gamma(I)+r_\Gamma(J).
	\end{align*}
	This is the desired submodular inequality.
\end{proof}

To study tropical multidegrees, we need the notion of \emph{bounded rational equivalence} as developed by Allermann, Hampe and Rau \cite{AR, AHR}.
This provides the link to the toric methods and Minkowski weights used in \autoref{sect_toric}.
If two tropical cycles $\Gamma \subset N_\RR$ and  $\Gamma' \subset N_\RR$ are bounded rationally equivalent, we write $\Gamma \bdeq \Gamma'$.

\begin{definition}
	Let $\sigma\subset N_\RR$ be a polyhedron. Its \emph{recession cone} is
	$$
		\Rec(\sigma)
		\;:=\;
		\bigl\{\bv\in N_\RR\mid \bu+\RR_{\geq0}\bv\subseteq\sigma
		\text{ for every }\bu\in\sigma\bigr\}.
	$$
	If $\Gamma\subset N_\RR$ is a tropical cycle, then, after refining $\Gamma$ if necessary,
	$$
		\bigl\{\Rec(\sigma)\mid\sigma\text{ is a face of }\Gamma\bigr\}
	$$
	is a fan.
	This fan has a natural weighting that makes it a tropical fan cycle, called the
	\emph{recession fan} of $\Gamma$ and denoted $\Rec(\Gamma)$; see	\cite[Definition~5.1]{AHR}.
\end{definition}

We will need the following properties of bounded rational equivalence.

\begin{theorem}[{Allermann, Hampe and Rau \cite[Proposition 3.3, Theorem 5.3]{AHR}}]
	\label{thm_AHR}
	Let $\Gamma \subset N_\RR$ be a tropical cycle and $\bv \in N_\RR$.
	Then:
	\begin{enumerate}[\rm (i)]
		\item $\Gamma \,\bdeq\, \Gamma + \bv$.
		\item $\Gamma \,\bdeq\, \Rec(\Gamma)$.
	\end{enumerate}
\end{theorem}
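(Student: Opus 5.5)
The plan is to prove both parts with a single device. Bounded rational equivalence in the sense of \cite{AR, AHR} is generated by cycles of the form $\pi_*\big(\varphi\cdot C\big)$, where $C$ is a tropical cycle in $N_\RR\times\RR$, the function $\varphi$ is a \emph{bounded} rational (piecewise integer-affine) function on $C$, and $\pi:N_\RR\times\RR\to N_\RR$ is the projection. So for each part I will build an auxiliary cycle $C\subset N_\RR\times\RR$ that interpolates between the two cycles. I will use the same bounded function in both cases, pulled back from the $\RR$-coordinate:
$$
\varphi(\bu,t)\;:=\;\max\big(0,\min(t,1)\big).
$$
On the tropical line $\RR$, the divisor of $\max(0,\min(t,1))$ is $[0]-[1]$, up to the global sign convention. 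Hence, whenever $C$ meets the slices $t=0$ and $t=1$ properly, $\varphi\cdot C$ equals the slice of $C$ at $t=0$ minus the slice at $t=1$. Pushing forward, $\Gamma_0-\Gamma_1$ is boundedly rationally equivalent to $0$, where $\Gamma_0$ and $\Gamma_1$ are these slices. Since $\varphi$ takes values in $[0,1]$, the equivalence is indeed bounded.

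For (i), I take $C$ to be the image of $\Gamma\times\RR$ under the unimodular linear map $(\bu,t)\mapsto(\bu+t\bv,\,t)$. Here $\bv$ may be irrational, so the map is affine rather than rational; to handle this I will first reduce to rational $\bv$, or else argue that translation is a limit of rational ones and that the equivalence classes are closed, as in \cite{AHR}. Granting this, $C$ is balanced because it is the image of the balanced cycle $\Gamma\times\RR$ under a lattice isomorphism. Its slice at $t=0$ is $\Gamma$ and its slice at $t=1$ is $\Gamma+\bv$, with the same weights, since the map restricts to a lattice isomorphism on each slice. Applying the computation above gives $\Gamma\bdeq\Gamma+\bv$.

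For (ii), I refine $\Gamma$ so that its recession cones form a fan, and then take the ``cone over $\Gamma$ at height one, completed downward'':
$$
C\;:=\;\overline{\{(t\bu,t)\mid t>0,\ \bu\in\Gamma\}}\;\cup\;\big(\Rec(\Gamma)\times\RR_{\le 0}\big).
$$
Each facet $\sigma$ of $\Gamma$ gives the facet $\overline{\Cone}(\sigma\times\{1\})$, which receives weight $\omega_\Gamma(\sigma)$. The closure of this cone adds $\Rec(\sigma)\times\{0\}$ as a face. Each maximal cone of $\Rec(\Gamma)$ receives its recession-fan weight on the lower half.

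The slice at $t=1$ is $\Gamma$. The slice at $t=0$ is $\Rec(\Gamma)$, because the slice picks out exactly the lower pieces $\Rec(\Gamma)\times\{0\}$ together with the boundary faces of the upper cones, and I will check that the weights match. The divisor computation then gives $\Gamma\bdeq\Rec(\Gamma)$.

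The main obstacle is verifying that $C$ is balanced along the codimension-one faces lying in $\{t=0\}$. At such a face, the upward facets $\overline{\Cone}(\sigma\times\{1\})$ contribute primitive vectors of the form $(\ast,+1)$ modulo the face. The downward facet contributes $(0,-1)$ with the recession weight. Balancing therefore requires two things. First, the $t$-components cancel; this is exactly the statement that the weight of a cone of $\Rec(\Gamma)$ is the sum of the weights of the facets of $\Gamma$ whose recession cone is that cone, which is the definition of the recession-fan weights in \cite[Definition~5.1]{AHR}. Second, the $N_\RR$-components balance; this should follow from the balancing of $\Gamma$ at the codimension-one unbounded faces and from the balancing of $\Rec(\Gamma)$ itself. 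Away from $t=0$, balancing is inherited from $\Gamma$ (for $t>0$, each slice is a dilate $t\Gamma$) and from $\Rec(\Gamma)$ (for $t<0$). I expect the careful bookkeeping at $t=0$, including the choice of refinement so that the faces line up, to be the only delicate step.
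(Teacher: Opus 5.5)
The paper does not prove this statement; it quotes it from Allermann--Hampe--Rau, so your argument has to stand on its own. Part (i) is correct for $\bv\in N$. For general $\bv\in N_\RR$ the shear is not a lattice map, and the limit/closedness route you mention would need its own justification. It is also unnecessary. Write $\bv=\sum_i c_i\bw_i$ with $\bw_i\in N$ and real $c_i\ge 0$, shear by $\bw_i$, and use $\max(0,\min(t,c_i))$, whose divisor is $[0]-[c_i]$; rational functions may have real constant terms. Then chain the equivalences.

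The genuine gap is in (ii): your cycle $C$ is not balanced along $\{t=0\}$, and the step you flagged as the main obstacle is exactly where it fails.
\begin{itemize}
\item \emph{The defect.} Let $\rho$ be a $d$-dimensional cone of $\Rec(\Gamma)$ and work modulo $\Span(\rho)\times\{0\}$. Assume for now that the affine spans of faces of $\Gamma$ contain lattice points. The upward facet over $\sigma$, where $\Rec(\sigma)=\rho$, contributes $\omega_\Gamma(\sigma)\,([\sigma],1)$, with $[\sigma]\in N_\RR/\Span(\rho)$ the class of any point of $\sigma$. The lower facet $\rho\times\RR_{\le0}$ contributes $(0,-w_\rho)$. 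The $t$-parts cancel. The $N_\RR$-part $\sum_\sigma\omega_\Gamma(\sigma)[\sigma]$, however, is a weighted sum of the positions of the ends of $\Gamma$ parallel to $\rho$, and nothing forces it to vanish.
\item \emph{Counterexample.} Take the tropical line in $\RR^2$ with vertex $\ba\in\ZZ^2$. At the ray $\RR_{\ge0}(\ee_1,0)$ you get $(a_2,1)+(0,-1)=(a_2,0)$ in $\RR^3/\RR(\ee_1,0)$, and similarly at the other two rays. So $C$ is balanced only if $\ba=0$, i.e.\ only if $\Gamma=\Rec(\Gamma)$. A curve has no unbounded codimension-one faces, so your appeal to the balancing of $\Gamma$ there is empty.
\item \emph{Repair in the integral case.} The lower half must carry ends at the opposite positions. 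Under the integrality assumption above, $\overline{\Cone}(\Gamma\times\{1\})-\overline{\Cone}(\Gamma\times\{-1\})+2\,(\Rec(\Gamma)\times\RR_{\le0})$ is balanced. Your function $\varphi$ then yields $\Gamma\bdeq\Rec(\Gamma)$.
\item \emph{Second, independent gap: weights and rationality.} Weighting $\overline{\Cone}(\sigma\times\{1\})$ by $\omega_\Gamma(\sigma)$ is wrong without that integrality. For the tropical line with vertex $(1/2,0)$, the cone with weights $1,1,1$ is not balanced along the ray through $(1/2,0,1)$. The balanced weights on the cones over the rays in directions $\ee_1,\ee_2,-\ee_1-\ee_2$ are $2,1,1$, and the slice at $t=1$ is then $2\Gamma$. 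For irrational vertex positions the cone is not even rational.
\end{itemize}
So an extra reduction, or a different argument, is needed to reach arbitrary tropical cycles.
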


Finally, we have the following proposition stating that tropical multidegrees are invariant under bounded rational equivalence.

\begin{proposition}
	\label{prop_multideg_bdeq}
	Let
	$\Gamma,\Gamma'\subset \NR$ be $d$-dimensional tropical cycles with $\Gamma\bdeq \Gamma'$, and for
	each $i$ let $\Lambda_i,\Lambda_i'\subset \RR^{m_i}$ be tropical divisors with $\Lambda_i\bdeq \Lambda_i'$.
	Let $\underline{\Lambda} = \Lambda_1,\ldots,\Lambda_p$ and $\underline{\Lambda'} = \Lambda_1',\ldots,\Lambda_p'$.
	 Then, for every $\bn=(n_1,\dots,n_p)\in \NN^p$ with $|\bn|=d$, we have
	$
	\deg_{\underline{\Lambda}}^\bn(\Gamma)
	=
	\deg_{\underline{\Lambda'}}^\bn(\Gamma').
	$
\end{proposition}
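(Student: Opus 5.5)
The plan is to reduce the statement to two facts about bounded rational equivalence from \cite{AR, AHR}: it is compatible with stable intersection, and it preserves degrees of zero-dimensional cycles. First, since $\Lambda_i\bdeq\Lambda_i'$ in $\RR^{m_i}$, we want $\Pi_i^{-1}(\Lambda_i)\bdeq\Pi_i^{-1}(\Lambda_i')$ in $\NR$. The natural way to see this is to write $\Pi_i^{-1}(\Lambda_i)=\Lambda_i\times\prod_{j\neq i}\RR^{m_j}$ (up to reordering factors). A bounded rational equivalence on $\RR^{m_i}$ is witnessed by a cycle in $\RR^{m_i}\times\RR$, or by a rational function with bounded difference, and taking its product with the remaining factors witnesses the equivalence on $\NR$. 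Equivalently, bounded rational equivalence is preserved under pullback along the linear map $\Pi_i$, which is \cite[\S 3]{AHR}.

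Second, we use that stable intersection descends to bounded rational equivalence classes. If $A\bdeq A'$ and $B$ is any tropical cycle, then $A\st B\bdeq A'\st B$; this is the statement that the intersection product on $\NR$ is well defined on the group of cycles modulo bounded rational equivalence \cite{AHR}. Stable intersection is also commutative and associative. So we replace the factors one at a time: first $\Gamma$ by $\Gamma'$, then each copy of $\Pi_i^{-1}(\Lambda_i)$ by $\Pi_i^{-1}(\Lambda_i')$. This gives
$$
\Gamma\st\prod_i\left(\Pi_i^{-1}(\Lambda_i)\right)^{n_i}\;\bdeq\;\Gamma'\st\prod_i\left(\Pi_i^{-1}(\Lambda_i')\right)^{n_i}
$$
as zero-dimensional cycles, where the products are iterated stable intersections. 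To do this we only need the one-sided compatibility above, applied repeatedly.

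Finally, the degree of a zero-dimensional cycle is invariant under bounded rational equivalence. One route is \autoref{thm_AHR}(ii): a $0$-cycle $Z$ satisfies $Z\bdeq\Rec(Z)$, and $\Rec(Z)$ is the origin with weight $\deg(Z)$. Two bounded rationally equivalent $0$-cycles therefore have equivalent recession fans. Two fan $0$-cycles supported at the origin are equivalent only if their weights agree, since the degree map on $0$-cycles is well defined on the quotient \cite{AR, AHR}. Hence $\deg_{\underline{\Lambda}}^\bn(\Gamma)=\deg_{\underline{\Lambda'}}^\bn(\Gamma')$.

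The main obstacle is the compatibility of stable intersection with bounded rational equivalence. I would not reprove this but cite it from \cite{AHR}, where the intersection product with cycles is shown to respect the equivalence generated by bounded rational functions. The point to check carefully is that the paper's definition of stable intersection as a limit of transverse intersections agrees with the intersection product of \cite{AR, AHR}. This agreement is standard (see \cite{JY}, \cite{Katz}).
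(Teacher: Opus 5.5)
Your proposal is correct and follows the paper's proof. Both rest on the three compatibilities of bounded rational equivalence in \cite[Proposition~3.2]{AHR}: with products (giving $\Pi_i^{-1}(\Lambda_i)\bdeq\Pi_i^{-1}(\Lambda_i')$ via $\Pi_i^{-1}(\Lambda_i)=\Lambda_i\times\prod_{j\neq i}\RR^{m_j}$), with stable intersection against a fixed cycle, and with the degree of zero-dimensional cycles, applied one factor at a time. Your detour through recession fans in the last step is harmless but circular as written: concluding $\deg(Z)=\deg(Z')$ from $\deg(Z)\cdot\{0\}\bdeq\deg(Z')\cdot\{0\}$ is exactly the invariance of degree \cite[Proposition~3.2(v)]{AHR}, which you can cite directly.
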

\begin{proof}
	By \cite[Proposition~3.2(i)]{AHR}, bounded rational equivalence is preserved after taking products
	with another tropical cycle. Since
	$
	\Pi_i^{-1}(\Lambda_i)
	=
	\Lambda_i\times \prod_{j\neq i}\RR^{m_j},
	$
	we obtain
	$
	\Pi_i^{-1}(\Lambda_i) \bdeq \Pi_i^{-1}(\Lambda_i')
	$
	for each  $i$.
	Then \cite[Proposition~3.2(iv)]{AHR} shows that stable intersection with a fixed tropical cycle preserves bounded rational equivalence.
	Moreover, \cite[Proposition~3.2(v)]{AHR} shows that the degree of a zero-dimensional cycle is preserved under bounded rational equivalence.
	By applying these statements repeatedly, we obtain the claimed equality.
\end{proof}

\section{Toric methods and Minkowski weights}
	\label{sect_toric}

	In this section, we prove \autoref{thmB} using toric methods.
	We exploit the description of Chow cohomology of complete toric varieties in terms of Minkowski weights due to Fulton and Sturmfels \cite{FS}.
	For the basic results and notation on toric varieties, we follow Fulton \cite{Fu} and Cox, Little, and Schenck \cite{CLS}.
	We briefly recall the relevant definitions.

	We continue using the notation introduced in \autoref{sect_tropical}.
	In particular, we have positive integers $p \ge 1$ and $m_1, \ldots, m_p \in \ZZ_+$, and we set $N := \ZZ^{m_1+\cdots+m_p}$, $\NR := N \otimes_\ZZ \RR \cong  \RR^{m_1}\times \cdots \times \RR^{m_p}$ and $m := m_1+\cdots+m_p$.
	Let
	$
	M:=N^\vee:=\operatorname{Hom}_\ZZ(N,\ZZ)
	$
	be the lattice dual to $N$. 
	Thus $N$ and $M$ are, respectively, the cocharacter and character lattices of the algebraic torus $T_N$.
	Let $\Sigma$ be a complete fan in $N_\RR$, and let $0\le k\le m$.
	A \emph{codimension-$k$ Minkowski weight} on $\Sigma$ is a function on codimension-$k$ cones
	$$
		\omega \;:\; \Sigma^{(k)}\; \longrightarrow\; \ZZ
	$$
	such that, for every codimension-$(k+1)$ cone $\tau\in \Sigma^{(k+1)}$, we have the balancing condition
	$$
	\sum_{\sigma\in \Sigma^{(k)},\, \tau\subset \sigma} \omega(\sigma)\,\bv_{\sigma/\tau}=0
	\qquad\text{in } N_\RR/(N_\tau)_\RR.
	$$
The \emph{group of codimension-$k$ Minkowski weights} on $\Sigma$ is given by
$$
\MW^k(\Sigma) \;:=\; \big\lbrace \text{codimension-$k$ Minkowski weights on $\Sigma$}\big\rbrace \;\subset\; \ZZ^{\Sigma^{(k)}}.
$$
We repeatedly use the correspondence of Fulton--Sturmfels \cite{FS}: if $\Sigma$ is a complete fan, then for every $k$ there is a natural isomorphism between the group of codimension-$k$ Minkowski weights on $\Sigma$ and the operational Chow cohomology group:
$$
	A^k(X_\Sigma) \;\cong\; \MW^k(\Sigma).
$$
We freely regard codimension-$k$ Minkowski weights on $\Sigma$ as classes in $A^k(X_\Sigma)$.
Under this correspondence, the cup product in $A^\bullet(X_\Sigma)$ is transported to the product of Minkowski weights computed by the \emph{fan displacement rule} of Fulton--Sturmfels \cite{FS}.
For complete rational fans, this product is the stable intersection product of the corresponding tropical fan cycles; see \cite{Katz} and \cite[\S 6.7]{MS}.
In particular, in codimension one, we use the natural isomorphisms 
$$
\MW^1\left(\Sigma\right) \;\cong\; A^1\left(X_\Sigma\right) \;\cong\; \Pic\left(X_\Sigma\right)
$$
(see \cite[Corollary 3.4]{FS}).
Therefore, to any tropical fan divisor $\Lambda$ compatible with $\Sigma$ (i.e., a refinement of $\Lambda$ is a subfan of $\Sigma$), we can associate a codimension-one Minkowski weight on $\Sigma$ and a torus-invariant Cartier divisor on $X_\Sigma$.

We begin with the following lemma, which is a direct consequence of \cite[Theorem A]{CCLMZ}.

\begin{lemma}
	\label{lem_pos_multdeg}
	Let $Y$ be an irreducible complete variety of dimension $d$, and let $\LL_1,\dots,\LL_p$ be globally generated line bundles on $Y$.
	For each $i$, let $\varphi_i:Y\rightarrow \PP\big(\HH^0\left(Y, \LL_i\right)^{\scriptscriptstyle\vee}\big)$ be the morphism defined by $\LL_i$, and let
	$$
	\varphi_I \;:\; Y \;\longrightarrow \prod_{i \in I} \PP\big(\HH^0\left(Y, \LL_i\right)^{\scriptscriptstyle\vee}\big) \qquad \text{ for each $I \subseteq [p]$}.
	$$
	Fix $\bn=(n_1,\dots,n_p)\in \NN^p$ with $|\bn|=d$.
	Then
	$$
	\int_Y c_1(\LL_1)^{n_1}\cdots c_1(\LL_p)^{n_p} \smallfrown [Y] \;>\; 0 \qquad \text{if and only if} \qquad  |\bn|_I \;\le\; \dim\left(\varphi_I(Y)\right) \;\; \text{ for all $I \subseteq [p]$}.
	$$
\end{lemma}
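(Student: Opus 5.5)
The plan is to reduce to the positivity criterion of \cite[Theorem~A]{CCLMZ} for multiprojective varieties (\autoref{thm_positivity}) by pushing everything into a product of projective spaces via the morphisms $\varphi_i$. Since each $\LL_i$ is globally generated, $\varphi_i$ is a morphism and $\LL_i \cong \varphi_i^*\OO(1)$. Set $\PP := \prod_{i=1}^p \PP\big(\HH^0(Y,\LL_i)^{\scriptscriptstyle\vee}\big)$ and $\varphi := \varphi_{[p]} : Y \to \PP$. Let $X := \varphi(Y)$; it is an irreducible closed multiprojective subvariety because $Y$ is complete and irreducible. Then $c_1(\LL_i) = \varphi^* H_i$, where $H_i$ is the pullback of the hyperplane class from the $i$-th factor.

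By the projection formula we get
$$
\int_Y c_1(\LL_1)^{n_1}\cdots c_1(\LL_p)^{n_p}\smallfrown[Y] \;=\; \int_\PP H_1^{n_1}\cdots H_p^{n_p}\smallfrown \varphi_*[Y],
$$
and $\varphi_*[Y] = \deg(Y/X)\,[X]$ if $\dim X = d$, while $\varphi_*[Y] = 0$ if $\dim X < d$.

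We treat the two cases separately.

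\emph{Case $\dim X = d$.} The integral equals $\deg(Y/X)\cdot\deg_\PP^\bn(X)$ with $\deg(Y/X) > 0$. By \autoref{thm_positivity}, this is positive if and only if $|\bn|_I \le \dim \Pi_I(X)$ for all $I$. Since $\Pi_I\circ\varphi = \varphi_I$, we have $\Pi_I(X) = \varphi_I(Y)$, which is exactly the stated criterion.

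\emph{Case $\dim X < d$.} The integral is zero. The condition also fails: taking $I=[p]$ gives $|\bn|_{[p]} = d > \dim \varphi_{[p]}(Y)$. So both sides are false and the equivalence holds.

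The only point requiring care is the identification $\varphi_I(Y) = \Pi_I(X)$ and the use of the projection formula for the proper map $\varphi$ between possibly singular varieties. Both are standard in Fulton's intersection theory, since Chern classes of line bundles commute with proper pushforward. There is no real obstacle; one could alternatively quote \cite[Theorem~A]{CCLMZ} directly if it is already stated for globally generated line bundles.
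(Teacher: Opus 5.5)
Your proof is correct and follows the paper's argument: you push forward along $\varphi$ into the product of projective spaces, use the projection formula to get $\deg(Y/X)\,\deg_\PP^\bn(X)$, and invoke \cite[Theorem~A]{CCLMZ} with $\Pi_I(X)=\varphi_I(Y)$. Your separate treatment of the case $\dim X<d$, where the condition already fails at $I=[p]$, is a slightly more explicit version of the paper's remark that $\deg(Y/W)=0$ when $Y\to W$ is not generically finite.
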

	\begin{proof}
		Consider the product of projective spaces $\PP = \PP\big(\HH^0\left(Y, \LL_1\right)^{\scriptscriptstyle\vee}\big) \times \cdots \times \PP\big(\HH^0\left(Y, \LL_p\right)^{\scriptscriptstyle\vee}\big)$ and the morphism  $\varphi = \varphi_{[p]} : Y \rightarrow \PP$.
		For each $I \subseteq [p]$, let
		$$
		p_I \;:\; \PP \;\longrightarrow\; \prod_{i \in I} \PP\big(\HH^0\left(Y, \LL_i\right)^{\scriptscriptstyle\vee}\big)
		$$
		be the natural projection.
		For each $i$, notice that $\LL_i \cong \varphi^*\left(\OO_\PP(\ee_i)\right)$ where $\ee_i = (0,\ldots,1,\ldots,0) \in \NN^p$ is the $i$-th elementary basis vector.
		Let $W = \varphi(Y) \subset \PP$.
		By the projection formula, we get the equality
		\begin{align*}
			\int_Y c_1(\LL_1)^{n_1}\cdots c_1(\LL_p)^{n_p} \smallfrown [Y] &\;=\;  \deg\left(Y/W\right) \,\int_\PP c_1(\OO_\PP(\ee_1))^{n_1}\cdots c_1(\OO_\PP(\ee_p))^{n_p} \smallfrown [W] \\
			&\;=\; \deg\left(Y/W\right)\,  \deg_\PP^\bn(W).
		\end{align*}
		Recall that, if $Y\rightarrow W$ is not generically finite, then $\deg\left(Y/W\right)=0$.
		Finally, by \cite[Theorem A]{CCLMZ}, we obtain $\deg_\PP^\bn(W)>0$ if and only if $|\bn|_I \le \dim\left(p_I(W)\right) = \dim\left(\varphi_I(Y)\right)$ for all $I \subseteq [p]$.
	\end{proof}
	
The next proposition is one of our main tools. 
Its proof relies crucially on the Fulton–Sturmfels formula \cite[Lemma 4.4]{FS} for the Chow class of a subtorus compactification.

\begin{proposition}
	\label{prop_recover_pullback_fan}
	Let $\Sigma$ be a complete fan in $N_\RR$, let $L\subset N_\RR$ be a rational linear
	subspace of dimension $d$, and set $N_L:=N\cap L$.
	Let $\TL\subset T_N$ be the subtorus with
	cocharacter lattice $N_L$, and let
	$
	Y \;:=\; \overline{\TL} \;\subset\; X_\Sigma
	$
	be the closure of the torus $T_L$ in $X_\Sigma$.
	Then the following statements hold:
	\begin{enumerate}[\rm (i)]
		\item There exists a smooth complete refinement $\widehat{\Sigma}$ of $\Sigma$ such that
		$
		\widehat{\Sigma}_L := \big\lbrace \sigma \cap L \mid \sigma \in \widehat{\Sigma} \big\rbrace
		$
		is a subfan of $\widehat{\Sigma}$.
		Let
		$\widehat{Y} \;:=\; \overline{\TL} \;\subset\; X_{\widehat{\Sigma}}
		$
		be the closure of the torus $T_L$ in $X_{\widehat{\Sigma}}$.

		\item The natural toric morphism $j:X_{\widehat{\Sigma}_L}\rightarrow X_{\widehat{\Sigma}}$ is a closed immersion.
		In particular, we have $\widehat{Y}\cong X_{\widehat{\Sigma}_L}$.
		\item The function
		$$
			\omega_{L} \;:\; \widehat{\Sigma}^{(m-d)} \;\longrightarrow\; \ZZ,
			\qquad
			\omega_{L}(\sigma) \;:=\;
			\begin{cases}
				1 & \text{if } \sigma\in\widehat{\Sigma}_L\\
				0 & \text{otherwise } 
			\end{cases}
		$$
		is a codimension-$(m-d)$ Minkowski weight on $\widehat{\Sigma}$.
		\item The fundamental class $\big[\widehat{Y}\big] \in A^{m-d}\big(X_{\widehat{\Sigma}}\big)$ of $\widehat{Y}$ equals the class determined by $\omega_{L}$.
	\end{enumerate}
\end{proposition}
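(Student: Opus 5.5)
For part (i), I first refine $\Sigma$ so that $L$ becomes a union of cones. Concretely, choose rational linear forms $\ell_1,\ldots,\ell_{m-d}\in M_\RR$ cutting out $L$. Let $\Sigma'$ be the common refinement of $\Sigma$ with the complete fan of the hyperplane arrangement $\{\ell_j=0\}$, whose cones are the sign-chambers of the forms $\ell_j$. Then each cone of $\Sigma'$ lies in a single sign pattern, so $\sigma\cap L$ is a face of $\sigma$ for every $\sigma\in\Sigma'$ (it is the face where all the $\ell_j$ vanish). Hence $\Sigma'_L=\{\sigma\cap L\}$ is a subfan of $\Sigma'$, and it is complete in $L$. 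Next I apply toric resolution of singularities by iterated stellar subdivisions of $\Sigma'$ at lattice points. Each such subdivision either refines a cone contained in $L$ using lattice points of $L$, or refines a cone not contained in $L$. Since stellar subdivision at a ray $\rho$ replaces cones $\sigma\ni\rho$ by $\Cone(\rho,\tau)$ with $\tau$ a face of $\sigma$, subcones of $L$ remain unions of cones exactly when $\rho\in L$ for cones meeting $L$. So I order the subdivisions: first smooth the cones lying in $L$ using rays in $N_L$, then smooth the remaining cones. For a cone not contained in $L$, its face $\sigma\cap L$ is already smooth, and one can choose the subdivision rays outside $L$ so that faces in $L$ are untouched. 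This yields $\widehat\Sigma$ smooth and complete, with $\widehat\Sigma_L$ a subfan. I expect this bookkeeping to be the main obstacle; alternatively, I would cite the standard fact, used e.g. in \cite{FS} and \cite[\S 6.7]{MS}, that any fan admits a smooth refinement containing a smooth refinement of a given rational subspace as a subfan.

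For part (ii), the inclusion $N_L\hookrightarrow N$ maps each cone of $\widehat\Sigma_L$ into a cone of $\widehat\Sigma$, so it induces a toric morphism $j$. Since $\widehat\Sigma_L$ is complete, $X_{\widehat\Sigma_L}$ is complete, and $j$ restricts to the inclusion $T_L\hookrightarrow T_N$ on tori, so $j$ has image $\widehat Y$. To see that $j$ is a closed immersion, I check affine-locally. For $\tau=\sigma\cap L\in\widehat\Sigma_L$ with $\tau\in\widehat\Sigma$, the map $\CC[\tau^\vee\cap M]\to\CC[(\tau)^\vee\cap N_L^\vee]$ is surjective. This holds because $N_L$ is saturated, so $M\to N_L^\vee$ is surjective. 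Moreover, for a smooth cone $\tau$, whose generators are part of a basis of $N$, and hence also a basis of the saturated $N_L$ extended, every element of $\tau^\vee\cap N_L^\vee$ lifts to $\tau^\vee\cap M$. Compatibility with the cover by the open sets $U_\tau$ then follows from $j^{-1}(U_\tau)=U_\tau^{L}$, which uses the fact that $\widehat\Sigma_L$ is a subfan.

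For parts (iii) and (iv), I invoke \cite[Lemma 4.4]{FS}. It says that the class of $\overline{T_L}$ in a complete fan where $L$ is a union of cones is the Minkowski weight assigning to a codimension-$(m-d)$ cone $\sigma$ the index $[N:N_\sigma+N_{L'}]$, where $L'$ is a generic translate, when the generic translate meets $\sigma$. Equivalently, by the fan displacement rule, this recovers the tropical fan $L$ with weight one, since the index $[N_L:N_\sigma]=1$ for the cones $\sigma\subset L$ of dimension $d$ by saturation. Hence the weight is exactly $\omega_L$. Balancing, which is part (iii), then follows either from this identification or directly: $L$ with weight one is balanced, because around any $(d-1)$-cone $\tau\subset L$ the primitive vectors of the adjacent $d$-cones of the complete fan $\widehat\Sigma_L$ sum to zero modulo $L_\tau$. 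Part (iv) is then the statement of \cite[Lemma 4.4]{FS}, or equivalently $[\widehat Y]=j_*[X_{\widehat\Sigma_L}]$ evaluated via the orbit-closure intersection numbers, which equal $1$ on cones of $\widehat\Sigma_L$ by smoothness and transversality.
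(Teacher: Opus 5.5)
There is a genuine gap in your part (iv). \cite[Lemma~4.4]{FS} does not give a Minkowski weight. It gives a cycle representative $[\widehat Y]=\sum_{\tau}[N:N_L+N_\tau]\,[V(\tau)]$, summed over the cones $\tau$ of \emph{dimension} $m-d$ that meet $L+\bv$ for a generic $\bv$.

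The formula you attribute to it evaluates a $d$-dimensional cone $\sigma$ against a generic translate $L'$, and this cannot be correct. A generic translate of $L$ misses every $\sigma\subseteq L$, so your formula would return $0$ exactly where $\omega_L$ equals $1$. Neither of your follow-up remarks closes this. The equality $[N_L:N_\sigma]=1$ is a tautology, since $N_\sigma=N_L$. The fan displacement rule only multiplies weights you already know.

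What is missing is the evaluation of $\deg\bigl([\widehat Y]\cdot[V(\sigma)]\bigr)$ for every $d$-dimensional $\sigma\in\widehat\Sigma$. By \cite[Proposition~4.1]{FS}, this number is the value at $\sigma$ of the weight of $[\widehat Y]$. The paper computes it from the Lemma~4.4 expansion in two cases:
\begin{itemize}
\item If $\sigma\subseteq L$, completeness of $\Star(\sigma)$ in $N_\RR/L$ leaves exactly one $\tau$ in the sum with $\tau+\sigma$ a maximal cone, and smoothness gives index $1$.
\item If $\sigma\not\subseteq L$, then $\widehat Y$ lies in the union of the orbits $O_\gamma$ with $\gamma\in\widehat\Sigma_L$. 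Meanwhile $V(\sigma)$ is the union of the orbits of cones containing $\sigma$, none of which lies in $L$. So $\widehat Y\cap V(\sigma)=\varnothing$.
\end{itemize}

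Your closing alternative via $j$ can be completed, and it even avoids Lemma~4.4. For $\sigma\in\widehat\Sigma_L$, the same orbit argument gives $\widehat Y\cap V(\sigma)\subseteq O_\sigma\subset U_\sigma\cong\mathbb{A}^d\times(\CC^*)^{m-d}$. In this chart $\widehat Y$ and $V(\sigma)$ are $\mathbb{A}^d\times\{\mathrm{pt}\}$ and $\{0\}\times(\CC^*)^{m-d}$, which meet transversally in a single point. However, as written this is a one-line assertion, and it omits the vanishing for $\sigma\notin\widehat\Sigma_L$ entirely.

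Part (ii) also needs a further step. The charts $U_\tau$ with $\tau\in\widehat\Sigma_L$ cover only an open neighbourhood $W$ of $j(X_{\widehat\Sigma_L})$, not all of $X_{\widehat\Sigma}$. So your local check shows only that $j$ is a closed immersion into $W$. On those charts surjectivity is in fact automatic: any lift to $M$ of an element of $\tau^\vee\cap N_L^\vee$ is nonnegative on $\tau\subseteq L$, so smoothness plays no role. You can finish in either of two ways:
\begin{itemize}
\item Add that $j(X_{\widehat\Sigma_L})$ is closed because $X_{\widehat\Sigma_L}$ is complete, so this immersion is closed.
\item Do as the paper does and check every chart $U_\sigma$ with $\sigma\in\widehat\Sigma$. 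There $j^{-1}(U_\sigma)=U_{\sigma\cap L}$, and a lift $\bw$ must be corrected to $\bw+q\bw'$, with $\bw'\in M\cap L^\perp$ positive on $\sigma\setminus(\sigma\cap L)$ and $q\gg 0$.
\end{itemize}

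Part (i) is fine, but the subdivision bookkeeping is unnecessary. Your claim that $L$ stays a union of cones only if the new rays of cones meeting $L$ lie in $L$ is false. Once $L$ is a union of cones of $\Sigma'$, it remains one in every refinement. Indeed, if $\widehat\sigma\subseteq\sigma'$ and $\sigma'\cap L=\sigma'\cap u^\perp$ with $u\in(\sigma')^\vee$, then $\widehat\sigma\cap L=\widehat\sigma\cap u^\perp$, which is a face of $\widehat\sigma$. This is why the paper simply applies \cite[Theorem~11.1.9]{CLS} to $\Sigma'$.
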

\begin{proof}
	Let $M_L := \Hom_\ZZ\left(N_L, \ZZ\right)$ be the lattice dual to $N_L$.
	Since $N_L \subset N$ is a saturated sublattice, we have a natural surjection $M \surjects M_L$.
	
	\smallskip
	
	(i) Let $\Sigma'$ be a complete rational fan refining $\Sigma$ such that $L$ is the support of a subfan of $\Sigma'$.
	Then the result follows by applying \cite[Theorem~11.1.9]{CLS} to the complete fan $\Sigma'$.
	
	\smallskip

	(ii) 
	As in \cite[Proposition~11.2.14 and Theorem~3.A.5]{CLS}, it suffices to show that the natural toric morphism $j : X_{\widehat\Sigma_L}\to X_{\widehat\Sigma}$ is a closed immersion. 
	We can check this locally on $X_{\widehat\Sigma}$.
	Choose $\sigma \in \widehat{\Sigma}$ and $U_\sigma = \Spec\left(\CC[\sigma^\vee \cap M]\right) \subset X_{\widehat{\Sigma}}$.
	We have that $j^{-1}(U_\sigma) = U_\delta \subset X_{\widehat{\Sigma}_L}$ where $\delta = \sigma \cap L$ and $U_\delta = \Spec\left(\CC[\delta^\vee \cap M_L]\right)$.
	The result now follows because we have a surjective map of affine semigroup rings 
	$$
	\CC\big[\sigma^\vee \cap M\big] \;\surjects\; \CC\big[\delta^\vee \cap M_L\big].
	$$
	We now check this claim.
	Let $\bv \in \delta^\vee \cap M_L$ and take a lattice point $\bw \in M$ mapping to $\bv$.
	Since $\delta$ is a face of $\sigma$ and $\delta = \sigma \cap L$, there exists $\bw' \in M \cap L^\perp$ such that $\langle \bw', \bu \rangle > 0$ for all $\bu \in \sigma \setminus \delta$.
	Therefore, for $q \gg 0$, we have that $\bw + q\bw' \in \sigma^\vee$ and that its image under $M \surjects M_L$ is $\bv$.
	
	\smallskip
	
	(iii) 
	We have that $\widehat{\Sigma}_L$ is complete in $L$ because $\widehat{\Sigma}$ is complete in $N_\RR$.
	It then follows that $\omega_{L}$ restricted to $\widehat{\Sigma}_L$ yields a Minkowski weight.
	Then, extending by zero outside the subspace $L \subset N_\RR$, we get that $\omega_L$ defines a Minkowski weight on $\widehat{\Sigma}$.

	\smallskip

	(iv) Since $\widehat{Y}$ is the closure of the subtorus $\TL\subset T_N$ with cocharacter lattice $N_L$, \cite[Lemma~4.4]{FS} applies directly to the torus compactification $\widehat{Y}\subset X_{\widehat{\Sigma}}$.
	Hence, for every generic lattice vector $\bv\in N$, the fundamental class of $\widehat{Y}$ is given by
	$$
	\big[\widehat{Y}\big]
	\;=\;
	\sum_{\tau\in\widehat{\Sigma}(\bv)} \big[N:N_L+N_\tau\big]\,\big[V(\tau)\big]
	\qquad\text{in } A^{m-d}\left(X_{\widehat{\Sigma}}\right),
	$$
	where
	$$
	\widehat{\Sigma}(\bv)
	\;:=\;
	\big\{ \tau\in\widehat{\Sigma} \;\mid\; \text{$L + \bv$ meets $\tau$ in exactly one point} \big\};
	$$
	notice that $\dim(\tau)=m-d \text{ and } (L+\bv)\cap\Relint(\tau)\neq\varnothing$.
	By \cite[Theorem~3.1]{FS}, there is a unique class
	in $A^{m-d}(X_{\widehat{\Sigma}})$ corresponding to the Minkowski weight $\omega_L$.
	To conclude the proof, by \cite[Proposition 4.1]{FS},  it suffices to show the following equality
	\begin{equation}
		\label{eq_class_minkowski_L}
		\int_{X_{\widehat{\Sigma}}} \big[\widehat{Y}\big] \cdot \big[V(\sigma)\big] \;=\; \sum_{\tau\in\widehat{\Sigma}(\bv)} \big[N:N_L+N_\tau\big]\,\int_{X_{\widehat{\Sigma}}} \big[V(\tau)\big] \cdot \big[V(\sigma)\big] \;=\; \begin{cases}
			1 &  \text{if } \sigma \in \widehat{\Sigma}_L\\
			0 &  \text{otherwise }
		\end{cases}
	\end{equation}
	for all $\sigma \in \widehat{\Sigma}^{(m-d)}$.
	The intersection  $V(\tau) \cap V(\sigma)$ equals $V(\tau + \sigma)$ if $\tau$ and $\sigma$ span a  cone in $\widehat{\Sigma}$, and $\varnothing$ otherwise.
	Therefore, the above sum restricts to cones $\tau \in \widehat{\Sigma}(\bv)$ such that $\tau + \sigma$ is also a cone in $\widehat{\Sigma}$.

	Fix $\sigma \in \widehat{\Sigma}^{(m-d)}$.
	We divide the proof of the claimed equality \autoref{eq_class_minkowski_L} into two cases.

	{\sc Case 1:}
	Assume $\sigma \in \widehat{\Sigma}_L$. Since $\sigma$ is a $d$-dimensional cone in $L$, it spans $L$,  and so $N_\sigma = N_L$.
	The genericity of $\bv$ forces $L$ to be transverse to any $\tau \in \widehat{\Sigma}(\bv)$, and so we get
	$$
	\Span_\RR(\tau) + \Span_\RR(\sigma) \;=\; \Span_\RR(\tau) + L \;=\; \RR^m.
	$$
	Therefore $\dim(\tau + \sigma) = m$, and so we seek the $(m-d)$-dimensional cones $\tau \in \widehat{\Sigma}(\bv)$ such that $\gamma = \tau + \sigma$ is a maximal cone in $\widehat{\Sigma}$.
	Since $\widehat{\Sigma}$ is smooth, every maximal cone $\gamma$ containing $\sigma$ uniquely decomposes as $\gamma = \tau + \sigma$, where $\tau$ is an $(m-d)$-dimensional face.

	Consider the natural projection map $\pi: N_{\mathbb{R}} \surjects N_{\mathbb{R}} / L$.
	Then $\Star(\sigma) = \big\lbrace \overline{\gamma} \mid \sigma \subseteq \gamma \in \widehat{\Sigma} \big\rbrace$ is a complete fan in the quotient $N(\sigma)_\RR = N_{\mathbb{R}} / L$.
	The condition $(L+\bv) \cap \Relint(\tau) \neq \emptyset$ holds if and only if $\pi(\bv)$ lies in the relative interior of $\pi(\tau)$.

	Since the quotient fan $\Star(\sigma)$ is complete and $\bv$ is generic, $\pi(\bv)$ lies in the interior of exactly one maximal cone in the quotient.
	This implies there is a \emph{unique} cone $\tau \in \widehat{\Sigma}(\bv)$ such that $\tau + \sigma \in \widehat{\Sigma}$ is a maximal cone.
	For this unique $\tau$, because the maximal cone $\tau + \sigma$ is smooth, its primitive ray generators form a basis for $N$.
	As a consequence, $N_\tau \oplus N_\sigma = N$, and so by substituting $N_L = N_\sigma$, we obtain $N_L + N_\tau = N$. 
	The lattice index is therefore:
	$$
	\big[N : N_L + N_\tau\big] \;=\; \big[N : N\big] = 1.
	$$
	The degree of a point $[V(\tau + \sigma)]$ is exactly $1$.
	Finally, the sum in \autoref{eq_class_minkowski_L} collapses to a single term with coefficient $1$, yielding that the degree of $[\widehat{Y}] \cdot [V(\sigma)]$ is equal to one.

	{\sc Case 2:}
	Assume $\sigma \notin \widehat{\Sigma}_L$.
	For this case, we repeatedly use the Orbit–Cone Correspondence, see \cite[Theorem 3.2.6]{CLS}.
	The subvariety $X_{\widehat{\Sigma}_L}$ is a toric variety with torus $T_L$, stratified by $T_L$-orbits $O_{\gamma, L}$ for $\gamma \in \widehat{\Sigma}_L$.
	The ambient toric variety $X_{\widehat{\Sigma}}$ has torus $T_N$, stratified by $T_N$-orbits $O_{\gamma, N}$ for $\gamma \in \widehat{\Sigma}$.

	Since $\widehat{\Sigma}_L$ is a subfan of $\widehat{\Sigma}$, the closed toric immersion $j: X_{\widehat{\Sigma}_L} \hookrightarrow X_{\widehat{\Sigma}}$ maps each $T_L$-orbit $O_{\gamma, L}$ into the corresponding $T_N$-orbit $O_{\gamma, N}$.
	Thus, the subtorus closure $\widehat{Y} = j(X_{\widehat{\Sigma}_L})$ sits inside the ambient variety $X_{\widehat{\Sigma}}$ as:
	$$
	\widehat{Y} \;=\; \bigsqcup_{\gamma \in \widehat{\Sigma}_L} j\left(O_{\gamma, L}\right) \;\subset\; \bigsqcup_{\gamma \in \widehat{\Sigma}_L} O_{\gamma, N}.
	$$
	This implies that $\widehat{Y}$ only intersects a $T_N$-orbit if the corresponding cone belongs to $\widehat{\Sigma}_L$.

	On the other hand, the test class $V(\sigma) \subset X_{\widehat{\Sigma}}$ is the closure of the $T_N$-orbit $O_{\sigma, N}$.
	This orbit closure is the union of orbits corresponding to cones containing $\sigma$:
	$$
	V(\sigma) \;=\; \bigsqcup_{\substack{\gamma \in \widehat{\Sigma},\, \sigma \subseteq \gamma}} O_{\gamma, N}.
	$$
	Since $\sigma \notin \widehat{\Sigma}_L$, we also have $\gamma \notin \widehat{\Sigma}_L$ for all $\gamma \supseteq \sigma$.
 	Therefore, we have $\widehat{Y} \cap V(\sigma) = \varnothing$.
 	This concludes the proof of the proposition.
\end{proof}

The next lemma translates the notion of positive tropical divisor to the toric setting. 
We show that a Cartier divisor associated to a positive tropical fan divisor is basepoint-free and big.

\begin{lemma}
		\label{lem_refine_div}
	Let $\Lambda_i \subset \RR^{m_i}$ be a positive tropical fan divisor. 
	Then there is a smooth complete fan $\Sigma_i \subset \RR^{m_i}$ and a basepoint-free and big torus-invariant Cartier divisor $D_i$ on $X_{\Sigma_i}$ associated to a refinement of $\Lambda_i$.
\end{lemma}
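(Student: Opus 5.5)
We will use the standard dictionary between tropical fan divisors and piecewise linear functions on fans. The plan is to realize the Minkowski weight of $\Lambda_i$ as the tropical divisor of a piecewise linear function $\varphi$ on a suitable smooth complete fan $\Sigma_i$. We then show that positivity of $\Lambda_i$ forces $\varphi$ to be convex, or more precisely strictly convex modulo a linear function on a coarser fan. Convexity gives basepoint-freeness and strict convexity gives bigness.

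\emph{Step 1: the fan.} Choose a complete rational fan in $\RR^{m_i}$ such that a refinement of $\Lambda_i$ is a subfan of its codimension-one skeleton. Then pass to a smooth complete refinement $\Sigma_i$ (toric resolution, \cite[Theorem~11.1.9]{CLS}). The weights of $\Lambda_i$, extended by zero, define an element of $\MW^1(\Sigma_i)$. By \cite[Corollary~3.4]{FS} this gives a class in $\Pic(X_{\Sigma_i})$, represented by a torus-invariant Cartier divisor $D_i=\sum_\rho a_\rho D_\rho$. The corresponding piecewise linear function $\varphi$ is linear on each maximal cone, and its bending locus, with the usual lattice-index weights, is exactly $\Lambda_i$ (the tropical divisor of $\varphi$). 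Since $\Sigma_i$ is complete, $\varphi$ is determined up to a global linear function. We then fix $\varphi$ with $D_i$ effective, say.

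\emph{Step 2: nefness, hence basepoint-freeness.} On a complete toric variety, nef Cartier divisors are basepoint-free \cite[Theorem~6.3.12]{CLS}. So it suffices to check $D_i\cdot V(\tau)\ge 0$ for every wall $\tau\in\Sigma_i^{(m_i-1)}$. By Fulton--Sturmfels, this intersection number equals the Minkowski weight of $\Lambda_i$ on $\tau$: either a positive weight, if $\tau$ lies in the refinement of $\Lambda_i$, or $0$. All weights are positive, so $D_i$ is nef, hence basepoint-free. Note that this step uses only the positivity of the weights, not the positivity of $\Lambda_i$.

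\emph{Step 3: bigness.} This is the main obstacle, and here positivity of $\Lambda_i$ enters. For a basepoint-free $D_i$, bigness is equivalent to $D_i^{m_i}>0$, i.e.\ to the polytope $P_{D_i}$ being full-dimensional. We argue by contradiction. If $P_{D_i}$ is not full-dimensional, then $\varphi$ is linear along a nonzero rational direction $\bu$: its normal fan coarsens $\Sigma_i$ and is invariant under $\RR\bu$. Equivalently, $\Lambda_i$ is invariant under translation by $\RR\bu$. Take the tropical curve $C=\RR\bu$, a line with weight one. Generic translates of $C$ are either disjoint from $\Lambda_i$ or contained in it, and containment fails for a generic translate. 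Hence $\deg(\Lambda_i\st C)=0$, contradicting positivity. If invariance is not immediately available, we would instead compute $\deg(\Lambda_i\st C)$ via the projection formula as the degree of $\varphi$ restricted to the line. This degree equals the sum of the outgoing slopes $\varphi(\bu)+\varphi(-\bu)$ minus the linear contribution, and it vanishes exactly when $\varphi$ is linear on $\RR\bu$. Convexity of $\varphi$ then shows that its lineality space is nonzero if and only if some such $\bu$ exists. We may also invoke \autoref{lem_pos_multdeg} with $p=1$ to translate $D_i^{m_i}>0$ into $\dim\varphi_{D_i}(X_{\Sigma_i})=m_i$, which is equivalent to the lineality space of $\varphi$ being trivial. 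This completes the argument.
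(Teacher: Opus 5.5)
Your proof is correct. Steps 1 and 2 are exactly the paper's argument: refine to a smooth complete fan, realize the Minkowski weight of $\Lambda_i$ as a Cartier divisor $D_i$, get nefness from $D_i\cdot V(\tau)=\omega_{\Lambda_i}(\tau)\ge 0$, and conclude basepoint-freeness from \cite[Theorem~6.3.12]{CLS}.

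For bigness, both proofs produce a rational line $\RR\bu$ with $\deg(\Lambda_i\st\RR\bu)=0$, but they get there differently. The paper uses the morphism defined by $D_i$. If $D_i$ is not big, a one-parameter subgroup $T_\bv$ lies in the kernel of the induced map of tori, so the curve $C_\bv=\overline{T_\bv}$ is contracted and $D_i\cdot C_\bv=0$. It then needs \autoref{prop_recover_pullback_fan} (the Fulton--Sturmfels class of a subtorus closure) to identify $D_i\cdot C_\bv$ with $\deg(\Lambda_i\st\ell_\bv)$.

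You instead stay on the polyhedral side. The count $h^0(kD_i)=|kP_{D_i}\cap M|$ forces $P_{D_i}$ to be a lower-dimensional lattice polytope. The convex support function $\min_{m\in P_{D_i}}\langle m,\cdot\rangle$ is then linear along a rational direction $\bu$ orthogonal to $P_{D_i}$. Hence its bending locus $|\Lambda_i|$ is $\RR\bu$-invariant, and a generic translate of $\RR\bu$ misses it.

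These are the same fact seen from two sides: the kernel of the torus map has cocharacter lattice $N\cap\big(\Span_\RR(P_{D_i}-P_{D_i})\big)^{\perp}$. Your route avoids \autoref{prop_recover_pullback_fan} and curve intersection numbers. The price is relying on the convexity description of basepoint-free support functions (see \cite[\S6.1]{CLS}) and on identifying $\Lambda_i$ with the tropical divisor of $\varphi$. The paper's route stays inside the Minkowski-weight calculus it reuses in \autoref{prop_trop_multdeg_L}.

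The fallback computation at the end of your Step 3 is unnecessary once invariance is established. The phrase ``minus the linear contribution'' there is imprecise, since a linear function contributes nothing to $\varphi(\bu)+\varphi(-\bu)$.
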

\begin{proof}
	By using \cite[Theorem~11.1.9]{CLS}, we obtain a smooth complete fan $\Sigma_i \subset \RR^{m_i}$ such that a refinement of $\Lambda_i$ is a subfan of $\Sigma_i$.
		Let $\omega_{\Lambda_i}\in\MW^1(\Sigma_{i})$ be the Minkowski weight represented by $\Lambda_i$.
	Let $D_i$ be a Cartier divisor on $X_{\Sigma_{i}}$ representing the class $\omega_{\Lambda_i} \in \MW^1\left(\Sigma_{i}\right) \cong A^1\big(X_{\Sigma_{i}}\big) \cong {\rm Pic}\big(X_{\Sigma_{i}}\big)$.
	For every wall $\tau\in\Sigma_{i}^{(1)}$, we obtain
	$$
	D_{i}\cdot V(\tau) \;=\; \omega_{\Lambda_i}(\tau) \;\ge\; 0
	$$
	(see \cite[Proposition~4.1]{FS}).
	Then \cite[Theorem~6.3.12]{CLS} implies that $D_{i}$ is basepoint-free on $X_{\Sigma_{i}}$.

	We assume by contradiction that $D_i$ is not big on $X_{\Sigma_i}$.
	Let
	$$
	\varphi_i \;:\; X_{\Sigma_i} \;\longrightarrow\;
	\PP\bigl(\HH^0(X_{\Sigma_i},\OO_{X_{\Sigma_i}}(D_i))^{\scriptscriptstyle\vee}\bigr)
	$$
	be the morphism defined by $D_i$.
	This morphism is toric and restricts to a homomorphism $\psi_i:T_i\to T_i'$ on the dense tori.
	Since $D_i$ is not big, there is some $\bv \in \ZZ^{m_i}$ such that the one-dimensional subtorus $T_\bv := \IM(\lambda^\bv : \CC^* \rightarrow T_i) \subset T_i$ lies inside the kernel of $\psi_i$.
	Consider the corresponding curve $C_\bv := \overline{T_\bv} \subset X_{\Sigma_i}$ and the corresponding line $\ell_\bv := \RR \cdot \bv \subset \RR^{m_i}$.
	Then, by utilizing \autoref{prop_recover_pullback_fan}, we obtain
	$$
	\deg\left(\Lambda_i \st \ell_\bv\right) \;=\; D_i \cdot C_\bv \;=\; 0;
	$$
	a contradiction to the positivity of $\Lambda_i$. 
	Therefore $D_i$ is big, and this completes the proof of the lemma.  
\end{proof}

As a first important case, by combining the previous results, we can characterize the positivity of the tropical multidegrees of a rational linear space in $N_\RR$.

\begin{proposition}
	\label{prop_trop_multdeg_L}
	Let $L \subset N_\RR$ be a $d$-dimensional rational linear space and $\Lambda_1 \subset \RR^{m_1},\ldots,\Lambda_p\subset \RR^{m_p}$ be positive tropical fan divisors.
	Let $\underline{\Lambda} = \Lambda_1 ,\ldots,\Lambda_p$.
	Fix $\bn=(n_1,\dots,n_p)\in \NN^p$ with $|\bn|=d$.
	Then $\deg_{\underline{\Lambda}}^\bn(L) > 0$ if and only if
	$
	|\bn|_I \le \dim\left(\Pi_I(L)\right)
	$ 
	for all $I \subseteq [p]$.
\end{proposition}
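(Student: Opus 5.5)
We translate the tropical multidegree of $L$ into an intersection number on a toric variety and then apply \autoref{lem_pos_multdeg}. By \autoref{lem_refine_div}, for each $i$ we choose a smooth complete fan $\Sigma_i\subset\RR^{m_i}$, compatible with a refinement of $\Lambda_i$, together with a basepoint-free and big torus-invariant Cartier divisor $D_i$ on $X_{\Sigma_i}$ representing $\omega_{\Lambda_i}$. The product fan $\Sigma:=\Sigma_1\times\cdots\times\Sigma_p$ is complete in $N_\RR$, and $X_\Sigma=\prod_i X_{\Sigma_i}$. Under Fulton--Sturmfels, the pullback $\Pi_i^{-1}(\Lambda_i)=\Lambda_i\times\prod_{j\ne i}\RR^{m_j}$ is the Minkowski weight of the divisor $\pi_i^*D_i$, where $\pi_i:X_\Sigma\to X_{\Sigma_i}$ is the projection. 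Next we apply \autoref{prop_recover_pullback_fan} to $\Sigma$ and $L$. This gives a smooth refinement $\widehat\Sigma$ containing $L$ as a subfan, with $\widehat Y=\overline{T_L}\cong X_{\widehat\Sigma_L}$ and $[\widehat Y]=\omega_L$. The pullbacks of $\pi_i^*D_i$ along the refinement map $X_{\widehat\Sigma}\to X_\Sigma$ are still basepoint-free. Since the cup product of Minkowski weights is stable intersection, and degree is computed by integration, we obtain
$$
\deg^{\bn}_{\underline\Lambda}(L)\;=\;\int_{X_{\widehat\Sigma}}\prod_i c_1(\pi_i^*D_i)^{n_i}\smallfrown[\widehat Y]\;=\;\int_{\widehat Y}\prod_i c_1(\LL_i)^{n_i},
$$
where $\LL_i:=\OO(\pi_i^*D_i)|_{\widehat Y}$. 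The stable intersection is computed at the fan level, which is legitimate since all the objects involved are fans.

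The line bundles $\LL_i$ are globally generated on the irreducible complete $d$-dimensional variety $\widehat Y$, so \autoref{lem_pos_multdeg} applies. It shows that the number above is positive if and only if $|\bn|_I\le\dim\varphi_I(\widehat Y)$ for all $I$. It remains to identify $\dim\varphi_I(\widehat Y)=\dim\Pi_I(L)$. The map $\varphi_I$ factors as
$$
\widehat Y\to\prod_{i\in I}X_{\Sigma_i}\to\prod_{i\in I}\PP\big(\HH^0(\LL_i)^\vee\big).
$$
Here the second map is a product of the morphisms $\phi_i$ defined by $D_i$. Each $\phi_i$ is generically finite because $D_i$ is big, and hence the product map is generically finite on any subvariety meeting the dense torus. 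The first map sends the torus $T_L$ onto the subtorus of $\prod_{i\in I}T_i$ with cocharacter space $\Pi_I(L)$, so the closure of its image has dimension $\dim\Pi_I(L)$.

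We expect the main obstacle to be this last dimension count. One point needs care: the space of global sections of the restricted bundle $\LL_i$ may differ from that of $D_i$. We would handle this using the fact that the map defined by $\LL_i$ still factors through $\phi_i$ up to a linear projection, and that both maps have the same fibers on the torus. Alternatively, we would observe that CCLMZ's criterion depends only on the image up to finite maps. A second point also needs checking: for a toric morphism that is generically finite, being a finite-kernel homomorphism on the dense torus, the restriction to the subtorus $\Pi_I(T_L)$ is still finite-to-one. We would verify this through the kernel argument used in the proof of \autoref{lem_refine_div}.
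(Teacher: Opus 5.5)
Your proposal is correct and follows essentially the same route as the paper. You pick smooth fans and basepoint-free big $D_i$ via \autoref{lem_refine_div}, realize $L$ as the torus closure $\widehat{Y}$ via \autoref{prop_recover_pullback_fan}, and rewrite the multidegree as $\int D_1^{n_1}\cdots D_p^{n_p}\cdot[\widehat{Y}]$. You then conclude with \autoref{lem_pos_multdeg} and the finite-kernel torus argument giving $\dim\varphi_I(\widehat{Y})=\dim\Pi_I(L)$. The paper instead pushes down to $Y\subset X_\Sigma$ using $q_*[\widehat{Y}]=[Y]$ and the projection formula rather than staying on $\widehat{Y}$, but this changes nothing.

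The linear-system subtlety you flag is real but harmless, and the paper treats it only implicitly. The proof of \autoref{lem_pos_multdeg} uses nothing more than a morphism $\varphi$ with $\LL_i\cong\varphi^*\OO(\ee_i)$. So you may take $\varphi_I$ to be the composite through $\prod_{i\in I}X_{\Sigma_i}$ directly. Your factorization claim should be stated for that map, since the complete-linear-system map of $\LL_i$ need not factor through $X_{\Sigma_i}$ at all.
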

\begin{proof}
	By using \autoref{lem_refine_div}, we choose a smooth complete fan $\Sigma_i \subset \RR^{m_i}$ and a basepoint-free and big torus-invariant Cartier divisor $D_i$ on $X_{\Sigma_i}$ such that $D_i$ corresponds to $\Lambda_i$.
	Consider the smooth complete fan $\Sigma = \Sigma_1 \times \cdots \times \Sigma_p$ in $N_\RR$ and the smooth  toric variety $X_\Sigma = X_{\Sigma_1} \times \cdots \times X_{\Sigma_p}$.
	By an abuse of notation, we also denote by $D_i$ the pullback of $D_i$ to $X_\Sigma$ and by $\Pi_I : X_\Sigma \rightarrow \prod_{i\in I} X_{\Sigma_i}$ the natural projection. 

	We now apply \autoref{prop_recover_pullback_fan} to the fan $\Sigma$ and the linear space $L$.
	Let $N_L:=N\cap L$,  $\TL\subset T_N$ be the subtorus with
	cocharacter lattice $N_L$ and $Y := \overline{\TL} \subset X_\Sigma$.
	By \autoref{prop_recover_pullback_fan}, there is a smooth complete refinement $\widehat{\Sigma}$ of $\Sigma$ such that the fundamental class $\big[\widehat{Y}\big] \in A^{m-d}\left(X_{\widehat{\Sigma}}\right)$ of $\widehat{Y}  \subset X_{\widehat{\Sigma}}$ equals the class determined by the Minkowski weight $\omega_{L}$ (this Minkowski weight corresponds to the tropical cycle given by $L$).
	The natural toric morphism $q : X_{\widehat{\Sigma}} \longrightarrow X_{\Sigma}$ satisfies $q_*\big(\big[\widehat{Y}\big]\big) = \big[Y\big]$.
	Therefore, the correspondence of Fulton--Sturmfels \cite{FS} gives the equality
	$$
	\deg_{\underline{\Lambda}}^\bn(L) \;=\; \int_{X_{\widehat{\Sigma}}} \widehat{D}_1^{n_1} \cdots \widehat{D}_p^{n_p} \cdot \big[\widehat{Y}\big],
	$$
	where $\widehat{D}_i$ is the pullback of $D_i$ to $X_{\widehat{\Sigma}}$, and then the projection formula yields
	$$
	\deg_{\underline{\Lambda}}^\bn(L) \;=\; \int_{X_{\widehat{\Sigma}}} \widehat{D}_1^{n_1} \cdots \widehat{D}_p^{n_p} \cdot \big[\widehat{Y}\big] \;=\; \int_{X_\Sigma} D_1^{n_1} \cdots D_p^{n_p} \cdot \big[Y\big].
	$$
	Let
	$
		\varphi_i:X_{\Sigma_i}\longrightarrow
		\PP\bigl(\HH^0(X_{\Sigma_i},\OO_{X_{\Sigma_i}}(D_i))^{\scriptscriptstyle\vee}\bigr)
		=: \PP^{s_i}
	$
	be the morphism defined by $D_i$. 
	We get a morphism
		$$
		\varphi \;:\; X_\Sigma = X_{\Sigma_1} \times \cdots \times X_{\Sigma_p}  \;\longrightarrow\; \PP^{s_1} \times \cdots \times \PP^{s_p}.
		$$
		For each $I \subseteq [p]$, let $\varphi_I : X_\Sigma \longrightarrow \prod_{i \in I} \PP^{s_i}$
		be the natural map.
		Since $D_i$ is big, the map $\varphi_i:X_{\Sigma_i}\longrightarrow \PP^{s_i}$ is generically finite onto its image, and so the corresponding map of dense tori $\psi_i : T_i \subset X_{\Sigma_i} \longrightarrow T_i' \subset \PP^{s_i}$ is finite onto its image.
		This implies that the restriction of $\prod_{i \in I} T_i \longrightarrow \prod_{i \in I} T_i'$ to $\Pi_I\left(T_L\right)$ is also finite onto its image.
		 Since $T_L$ is dense in $Y$, we obtain
		$$
		\dim\left(\varphi_I(Y)\right)
		\;=\; \dim\left(\varphi_I(T_L)\right) \;=\; \dim\left(\Pi_I(T_L)\right) \;=\; \dim\left(\Pi_I(L)\right)
		$$
		for all $I \subseteq [p]$. 
		Finally, the result of the proposition follows from \autoref{lem_pos_multdeg}.
\end{proof}

We are now ready to prove the following positivity criterion for arbitrary tropical varieties. 

\begin{theorem}
	\label{thm_main_crit}
	Let
$
\Gamma\subset N_\RR=\RR^{m_1}\times\cdots\times \RR^{m_p}
$
be a $d$-dimensional tropical variety, and let $\underline{\Lambda}=\Lambda_1,\ldots,\Lambda_p$, where each $\Lambda_i\subset \RR^{m_i}$ is a positive tropical divisor.
Let $\sigma_1,\ldots,\sigma_k$ be the facets of $\Gamma$.
For every $\bn=(n_1,\ldots,n_p)\in\NN^p$ with $|\bn|=d$, we have
$\deg_{\underline{\Lambda}}^\bn(\Gamma)>0$
if and only if there exists a facet $\sigma_j$ such that
$$
\sum_{i \in I} n_i \;\le\; \dim\left(\Pi_I(\sigma_j)\right)
$$
for every $I\subseteq[p]$.
\end{theorem}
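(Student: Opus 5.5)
Our plan is to reduce to fans and linear spaces using bounded rational equivalence, then apply \autoref{prop_trop_multdeg_L} to each piece.

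\emph{Step 1: reduce to fans.} By \autoref{thm_AHR}(ii), $\Gamma\bdeq\Rec(\Gamma)$ and $\Lambda_i\bdeq\Rec(\Lambda_i)$. We must check that $\Rec(\Lambda_i)$ is still a positive tropical divisor; this holds because for any tropical curve $C$, $\deg(\Rec(\Lambda_i)\st C)=\deg(\Lambda_i\st C)$ by the degree invariance in \cite[Proposition~3.2]{AHR}. So by \autoref{prop_multideg_bdeq} we may replace $\Gamma$ and $\Lambda_i$ by their recession fans.

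This replacement changes the facets, so we need the facet-wise criterion to be stable under it. We handle this by decomposing by facets rather than passing to $\Rec(\Gamma)$ globally. For each facet $\sigma_j$ with parallel space $L_j={\rm Par}(\sigma_j)$, the multidegree should localize as
$$
\deg^\bn_{\underline\Lambda}(\Gamma)=\sum_j \omega_\Gamma(\sigma_j)\,c_j\,\deg^\bn_{\underline\Lambda}(L_j)
$$
with $c_j>0$, at least after a suitable degeneration. The cleanest way to obtain this is the following.

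\emph{Step 2: a positive combination of linear spaces.} Scale the divisors rather than $\Gamma$. Replace $\Pi_i^{-1}(\Lambda_i)$ by the translates $\Pi_i^{-1}(\Lambda_i)+\varepsilon\bv$ for generic small translation vectors, which is allowed by \autoref{thm_AHR}(i). Then write $\Lambda_i$ as bounded-equivalent to a rescaled copy $t\Lambda_i$ with $t\to 0$; this rescaled copy is also bounded-equivalent to $\Rec(\Lambda_i)$. As $t\to0$, all intersection points of $\Gamma$ with the generic translates of the $t\Lambda_i$-pullbacks concentrate near points of $\Gamma$. The local contribution at a generic point of $\Relint(\sigma_j)$ is exactly the stable intersection of $L_j$ with the fan pullbacks, computed locally.

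More robustly, use the locality of stable intersection. The product $\Gamma\st\prod(\Pi_i^{-1}\Lambda_i)^{n_i}$ with fan divisors $\Lambda_i$ can be computed as $\Gamma\st\Delta$, where $\Delta$ is the fan cycle obtained from the divisors. The degree of a zero-cycle $\Gamma\st\Delta$ equals $\deg(\Gamma\cap(\Delta+\bw))$ for generic $\bw$. By bounded equivalence, it also equals the same quantity for $\Delta$ replaced by $s\Delta+\bw$ with $s\to\infty$, i.e.\ $\Gamma\cap(\bw+\Delta)$ with $\bw$ a generic point placed far out. To avoid this, place instead $\bw$ at a generic point of $\Relint(\sigma_j)$ and use translation invariance, which gives different expressions rather than a sum.

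I therefore switch to a decomposition approach. Subdivide so that $\Gamma$ is a polyhedral complex, and use \cite[Lemma~4.4]{FS}-style reasoning or the tropical fan-displacement rule. For generic $\bw$ and the fan cycle $\Delta$ (complementary dimension $m-d$), each intersection point of $\Gamma\cap(\Delta+\bw)$ lies in the relative interior of some facet $\sigma_j$ and of some maximal cone $\delta$ of $\Delta$. Its multiplicity is $\omega_\Gamma(\sigma_j)\omega_\Delta(\delta)[N:N_{\sigma_j}+N_\delta]$, which is exactly a transverse contribution to $L_j\st\Delta$.

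Thus every term of $\deg(\Gamma\st\Delta)$ is nonnegative. This gives one direction: if $\deg>0$, some point lies on some $\sigma_j$, hence $\deg(L_j\st\Delta)>0$. That holds because the local cone of $\Delta$ meeting $L_j+(\text{point})$ transversally contributes positively, and all weights of $L_j\st\Delta$ are nonnegative since $\Delta$ is a product of self-intersections of positive divisors, hence effective. Then \autoref{prop_trop_multdeg_L} gives the inequalities for $\sigma_j$.

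\emph{Step 3: the converse, which is the main obstacle.} Suppose $\sigma_j$ satisfies the inequalities. Then $\deg^\bn_{\underline\Lambda}(L_j)>0$ by \autoref{prop_trop_multdeg_L}, so $L_j$ meets $\Delta+\bw'$ for generic $\bw'$. We need an intersection point actually inside the bounded piece $\sigma_j$.

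For this, scale $\Delta$ down: since $\Delta$ is a fan, $\Delta+\bw=\lambda(\Delta+\bw/\lambda)$. Choose $\bw=\bu+\lambda\bw'$ with $\bu\in\Relint(\sigma_j)$ and $\lambda\to0$. The intersection points of $(\Delta+\bw)$ with $L_j+\bu$ then lie within distance $O(\lambda)$ of $\bu$, hence inside $\Relint(\sigma_j)$, and they lie in no other facet of $\Gamma$ near $\bu$. So $\Gamma\cap(\Delta+\bw)$ contains these points with positive multiplicity. Since the total degree is independent of generic $\bw$ and all terms are nonnegative, $\deg^\bn_{\underline\Lambda}(\Gamma)>0$.

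The delicate points are the nonnegativity of the weights of $\Delta$, which holds because stable intersections of effective cycles are effective, and justifying genericity of $\bu+\lambda\bw'$.
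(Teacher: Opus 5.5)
Your argument is correct and is essentially the paper's proof. In both, after replacing each $\Lambda_i$ by its recession fan, one uses nonnegativity of the transverse multiplicities to show that $\deg_{\underline{\Lambda}}^\bn(\Gamma)>0$ exactly when, after a generic translation, some facet $\sigma_j$ meets a facet of $\Delta$ transversally in their relative interiors; this is in turn equivalent to $\deg_{\underline{\Lambda}}^\bn(L_j)>0$, and \autoref{prop_trop_multdeg_L} finishes the proof.

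The only difference is cosmetic. For the converse you dilate the fan $\Delta$ toward a point of $\Relint(\sigma_j)$, whereas the paper translates $\Gamma$ by a vector in the open set $\Relint(\tau)-\Relint(\sigma_j)$, which does not need $\Delta$ to be a fan. The tentative localization formula with constants $c_j>0$ in your Step~2 is never needed.
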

\begin{proof}
	Fix $\bn=(n_1,\dots,n_p)\in \NN^p$ with $|\bn|=d$.
	Consider the codimension-$d$ tropical cycle
	$$
	\Delta \;:=\; \left(\Pi_1^{-1}(\Lambda_1)\right)^{n_1} \st \cdots \st \left(\Pi_p^{-1}(\Lambda_p)\right)^{n_p}.
	$$
	Let $L_j := {\rm Par}(\sigma_j) = \Span_\RR\big\lbrace\fu - \fv \mid \fu, \fv \in \sigma_j\big\rbrace$ be the linear space parallel to $\sigma_j$.
	We first claim that
	\begin{equation}
		\label{eq_facet_transversality}
		\deg_{\underline{\Lambda}}^\bn(\Gamma) \;>\;0
		\quad\text{ if and only if }\quad
		L_j+{\rm Par}(\tau)\;=\;N_\RR
	\end{equation}
	for some facet $\sigma_j$ of $\Gamma$ and some facet $\tau$ of $\Delta$.
	By \autoref{thm_AHR}(i) and \autoref{prop_multideg_bdeq}, translating $\Gamma$ does not change the
	left-hand side.

	Assume $\deg_{\underline{\Lambda}}^\bn(\Gamma) >0$.
	Translate $\Gamma$ generically so that it meets $\Delta$
	transversely.
	Their stable intersection is then supported on the nonempty set $\Gamma\cap\Delta$, and the
	facets $\sigma_j$ and $\tau$ containing a transversal intersection point satisfy
	$L_j+{\rm Par}(\tau)=N_\RR$.

	 Conversely, assume that such facets exist. Then
	$
	U:=\Relint(\tau)-\Relint(\sigma_j)
	$
	is a nonempty open Euclidean subset of $N_\RR$. 
	Choose a point $\ba\in U$. 
	The facets $\sigma_j+\ba$ and $\tau$ meet transversely in their relative interiors, and so we get $\deg((\Gamma+\ba)\st\Delta)>0$.
	 Translation invariance gives 	$\deg(\Gamma\st\Delta)>0$, proving the claim \autoref{eq_facet_transversality}.

	Applying the same criterion \autoref{eq_facet_transversality}  to the linear space $L_j$ gives
	$$
	\deg_{\underline{\Lambda}}^\bn(L_j)>0
	\quad\text{ if and only if }\quad
	L_j+{\rm Par}(\tau)=N_\RR
	$$
	for some facet $\tau$ of $\Delta$.
	As a consequence,
	\begin{equation}
		\label{eq_equiv_pos_L_j}
		\deg_{\underline{\Lambda}}^\bn(\Gamma)>0
		\quad\text{ if and only if }\quad
		\deg_{\underline{\Lambda}}^\bn(L_j)>0
	\end{equation}
	for some $1\le j\le k$.

	Set $\Lambda_i'={\rm Rec}(\Lambda_i)$ and
	$\underline{\Lambda'}=\Lambda_1',\ldots,\Lambda_p'$. Each $\Lambda_i'$ is a positive tropical fan
	divisor: since $\Lambda_i\bdeq\Lambda_i'$ by \autoref{thm_AHR}(ii), for every tropical curve $C$,
	\autoref{prop_multideg_bdeq} gives
	$
	\deg(\Lambda_i'\st C)=\deg(\Lambda_i\st C)>0.
	$

	Finally, by applying \autoref{prop_multideg_bdeq} and \autoref{prop_trop_multdeg_L} to each $L_j$, we obtain that
	$$
	\deg_{\underline{\Lambda}}^\bn(L_j) \;=\; \deg_{\underline{\Lambda'}}^\bn(L_j) \;>\; 0
	$$
	if and only if
	$$
		|\bn|_I \;\le\; \dim\left(\Pi_I(L_j)\right) \;=\; \dim\left(\Pi_I(\sigma_j)\right) \qquad \text{ for all $I \subseteq [p]$}.
	$$
	The desired equivalence now follows from \autoref{eq_equiv_pos_L_j}.
	This concludes the proof of the theorem.
\end{proof}

\begin{remark}
	We point out that \autoref{thm_main_crit} can also be deduced from \cite[Lemma~2.9]{He} and \cite[Theorem~1.2]{He} by applying the latter to the linear spaces parallel to the facets of $\Gamma$.
	Our independent method of proof has the advantage of yielding log-concavity results for suitable tropical varieties, as demonstrated in \autoref{sec_augmented} for augmented Bergman fans of polymatroids. 
	Our method proceeds in three steps.
	First, bounded rational equivalence, as developed by Allermann--Hampe--Rau \cite{AHR}, replaces the relevant tropical cycles by their recession fans.
	Second, toric methods, via Fulton--Sturmfels theory of Minkowski weights \cite{FS}, translate the problem into intersection theory.
	Finally, the positivity result from \cite{CCLMZ} supplies the decisive positivity input.	
\end{remark}

From the results already obtained, we readily derive the desired positivity criterion for projection-pure and facet-selectable tropical varieties.

\begin{theorem}
	\label{thm_crit_two_cond}
	Let $\Gamma\subset N_\RR = \RR^{m_1}\times\cdots\times \RR^{m_p}$ be a $d$-dimensional projection-pure and facet-selectable tropical variety.
	Let $\underline{\Lambda}=\Lambda_1,\ldots,\Lambda_p$ where each $\Lambda_i\subset \RR^{m_i}$ is a positive tropical divisor.
	For every $\bn=(n_1,\ldots,n_p)\in\NN^p$ with $|\bn|=d$, we have 
	$$
	\deg_{\underline{\Lambda}}^\bn(\Gamma)>0 \quad \text{ if and only if } \quad \sum_{i \in I} n_i \;\le\; \dim\left(\Pi_I(\Gamma)\right) \text{\; for every \;$I\subseteq[p]$.}
	$$
	Moreover, the function
	$
	r_\Gamma:2^{[p]}\longrightarrow\NN
	$,
	$I\longmapsto\dim\left(\Pi_I(\Gamma)\right)$
	is the rank function of a polymatroid.
\end{theorem}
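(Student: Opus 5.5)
The statement combines \autoref{thm_main_crit}, the definition of facet-selectability, and \autoref{prop_rank_function}. The second assertion, that $r_\Gamma$ is a polymatroid rank function, is exactly \autoref{prop_rank_function}, since $\Gamma$ is projection-pure. For the first assertion I will prove the two implications separately, using \autoref{thm_main_crit} as the bridge between positivity and facet-wise dimension inequalities.

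For ($\Rightarrow$), suppose $\deg_{\underline{\Lambda}}^\bn(\Gamma)>0$. By \autoref{thm_main_crit}, there is a facet $\sigma_j$ with $|\bn|_I\le \dim(\Pi_I(\sigma_j))$ for every $I\subseteq[p]$. Since $\Pi_I(\sigma_j)\subseteq \Pi_I(\Gamma)$, monotonicity of dimension gives $\dim(\Pi_I(\sigma_j))\le \dim(\Pi_I(\Gamma))$. Here $\dim(\Pi_I(\Gamma))$ means the dimension of the finite union of polyhedra $\Pi_I(\sigma)$, which is their maximum dimension; this makes sense whether or not the projection is pure. Hence $|\bn|_I\le \dim(\Pi_I(\Gamma))$ for all $I$.

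For ($\Leftarrow$), suppose $|\bn|_I\le \dim(\Pi_I(\Gamma))$ for all $I$ and $|\bn|=d$. Facet-selectability (\autoref{def_two_conditions}(ii)) directly gives a facet $\sigma$ with $|\bn|_I\le\dim(\Pi_I(\sigma))$ for all $I$. Then \autoref{thm_main_crit} yields $\deg_{\underline{\Lambda}}^\bn(\Gamma)>0$.

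There is no real obstacle, since the substantive work is already contained in \autoref{thm_main_crit} and \autoref{prop_rank_function}. The only point needing care is that $\dim(\Pi_I(\Gamma))=\max_\sigma \dim(\Pi_I(\sigma))$. This holds because $\Pi_I(\Gamma)=\bigcup_\sigma \Pi_I(\sigma)$ is a finite union of polyhedra over the facets of $\Gamma$. Note that projection-purity is used only for the polymatroid statement, and facet-selectability only for the reverse implication.
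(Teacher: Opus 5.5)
Your proposal is correct and follows the same route as the paper: the equivalence comes from \autoref{thm_main_crit} plus facet-selectability, and the polymatroid claim is \autoref{prop_rank_function}. Your forward direction, which uses only $\Pi_I(\sigma_j)\subseteq\Pi_I(\Gamma)$, is a faithful expansion of what the paper leaves implicit.
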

\begin{proof}
	The positivity criterion follows directly from \autoref{thm_main_crit} and the definition of
	facet-selectability.
	The assertion that $r_\Gamma$ is a polymatroid rank function follows from
	\autoref{prop_rank_function} because $\Gamma$ is projection-pure.
\end{proof}

\section{Non-Lorentzian tropical volume polynomials}
\label{sec_non_lorentzian}

Under the hypotheses of \autoref{thmB}, the support of the tropical volume polynomial is M-convex. 
It is therefore natural to ask whether the same hypotheses impose the Lorentzian condition. 
The answer is no. 
We give two counterexamples, the second of which is translation-admissible and connected in codimension one.

We first recall the notion of Lorentzian polynomials as introduced by Br\"and\'en and Huh \cite{BH}. 
Let $h(t_1,\ldots,t_p)$ be a homogeneous polynomial of degree $d$ in $\RR[\ttt]=\RR[t_1,\ldots,t_p]$.

\begin{definition}[\cite{BH}]
The homogeneous polynomial $h$ is called \emph{Lorentzian} if the following conditions hold:
\begin{enumerate}[\rm (i)]
\item The coefficients of $h$ are nonnegative.
\item The support of $h$ is an M-convex set (equivalently, the set of lattice points of a polymatroid base polytope).
\item The quadratic form
$$
\partial_{t_{i_1}}\partial_{t_{i_2}}\cdots\partial_{t_{i_e}}(h)
$$
has at most one positive eigenvalue for any
$1\leq i_1\leq i_2\leq\cdots\leq i_e\leq p$, where $e=d-2$.
\end{enumerate}
\end{definition}

Let $\Gamma \subset N_\RR = \RR^{m_1} \times \cdots \times \RR^{m_p}$ be a $d$-dimensional tropical variety and $\underline{\Lambda}=\Lambda_1,\ldots,\Lambda_p$ where each $\Lambda_i \subset \RR^{m_i}$ is a positive tropical divisor. 
We encode all the tropical multidegrees in the \emph{tropical volume polynomial}
$$
{\rm tvol}_{\Gamma,\underline\Lambda}(\ttt)
\;:=\;
\sum_{|\bn|=d}\frac{d!}{n_1!\cdots n_p!}\,
\deg_{\underline\Lambda}^{\bn}(\Gamma)t_1^{n_1}\cdots t_p^{n_p}.
$$
By \autoref{thmB}, projection-purity and facet-selectability determine the support of this polynomial.
The following examples show that they do not impose the Lorentzian signature condition on its coefficients.
The computations below repeatedly use the distributive property of stable intersection over the sum of tropical cycles \cite[Remark~2.5(1), Theorem~2.16]{JY}.

\begin{proposition}
\label{prop_non_lorentzian_two_axioms}
There exists a projection-pure, facet-selectable tropical variety that is not translation-admissible and whose tropical volume polynomial, with respect to some sequence of positive tropical divisors, is not Lorentzian.
\end{proposition}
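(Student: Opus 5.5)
The plan is to build the example in the same ambient space as \autoref{ex_two_conditions_not_translation}: $p=3$, $(m_1,m_2,m_3)=(1,1,2)$, $N_\RR=\RR\times\RR\times\RR^2$. Take $\Gamma=L^{(1)}\cup L^{(2)}$, a union of two rational planes through the origin meeting only at $0$, with positive weights $w_1,w_2$ still to be chosen. Each plane will have the form
$$
L_{a,b} \;:=\; \Span_\RR\bigl((a,0,1,0),\,(0,b,0,1)\bigr), \qquad a,b\in\ZZ_+ .
$$
For such a plane, $\dim\Pi_I(L_{a,b})=\min\{2,\sum_{i\in I}m_i\}$ for every $I\subseteq[3]$. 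So the argument of \autoref{ex_two_conditions_not_translation} applies verbatim to any union of such planes and shows three things. First, $\Gamma$ is projection-pure, since all pieces of $\Pi_I(\Gamma)$ have equal dimension. Second, $\Gamma$ is facet-selectable, since every facet realizes the full rank function. Third, $\Gamma$ is not translation-admissible: with $V=\RR(a_1,0,1,0)\subset L^{(1)}$, the set $\Gamma+V$ is $L^{(1)}\cup(L^{(2)}+V)$, which is not pure-dimensional.

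For the divisors, take $\Lambda_1=\{0\}\subset\RR$, $\Lambda_2=\{0\}\subset\RR$ (each of weight one), and $\Lambda_3=\mathcal H_3$ the standard tropical line in $\RR^2$. These are all positive. By distributivity of stable intersection over sums of cycles,
$$
{\rm tvol}_{\Gamma,\underline\Lambda} \;=\; w_1\,{\rm tvol}_{L^{(1)},\underline\Lambda}+w_2\,{\rm tvol}_{L^{(2)},\underline\Lambda},
$$
so it suffices to compute the six coefficients $\deg^\bn(L_{a,b})$ with $|\bn|=2$.

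Each coefficient is a transverse intersection of $L_{a,b}$ with a product cycle, so it reduces to a lattice index $[N:N_L+N_\tau]$.
\begin{itemize}
\item $\bn=(2,0,0)$ and $(0,2,0)$ give $0$, because a point in $\RR$ has vanishing self-intersection (consistent with $r(\{1\})=r(\{2\})=1$).
\item $\bn=(1,1,0)$ intersects $L$ with $\{0\}\times\{0\}\times\RR^2$. This gives index $[\ZZ^2:\ZZ(a,0)+\ZZ(0,b)]=ab$.
\item $\bn=(0,0,2)$ intersects $L$ with $\RR\times\RR\times\{\mathrm{pt}\}$, since $\mathcal H_3^2$ is a point of degree one. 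The index is $1$.
\item The mixed terms $(1,0,1)$ and $(0,1,1)$ intersect the line $L\cap\{x_1=0\}=\RR(0,b,0,1)$, respectively $L\cap\{x_2=0\}=\RR(a,0,1,0)$, with $\RR\times\RR\times\mathcal H_3$. Computing the indices should give small values, linear in $a$ or $b$, or bounded.
\end{itemize}
Hence the quadratic form of ${\rm tvol}_{L_{a,b}}$ (up to the factor $2$) is roughly
$$
Q_{a,b}\;=\;2ab\,t_1t_2+t_3^2+\alpha\,t_1t_3+\beta\,t_2t_3 .
$$

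The non-Lorentzian sum should then be obtained by mixing two planes with very different ratios between the $t_1t_2$ coefficient and the $t_3^2$ coefficient. The model is the form $t_1t_2+t_3^2$, whose matrix $\left(\begin{smallmatrix}0&1/2&0\\1/2&0&0\\0&0&1\end{smallmatrix}\right)$ has eigenvalues $1,\tfrac12,-\tfrac12$, hence two positive eigenvalues. Each single $Q_{a,b}$ is Lorentzian, because it is the volume polynomial of a toric variety by \autoref{prop_trop_multdeg_L} and Brändén--Huh. The mixed terms $\alpha,\beta$ are exactly what prevent this. So the plan is to choose $(a,b)$ for the two planes and the weights $w_1,w_2$ so that the sum $w_1Q^{(1)}+w_2Q^{(2)}$ has positive determinant together with positive trace, or equivalently a positive-definite $2\times2$ principal minor on a suitable pair of directions. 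For instance, take one plane with $ab$ large and the other with $(a,b)=(1,1)$, and exploit that the mixed coefficients of the two planes point in incompatible directions. If this family turns out to be too rigid, the fallback is to replace $(0,b,0,1)$ by $(0,b,0,c)$, or $\mathcal H_3$ by a dilated or nonstandard positive tropical line, to decouple the coefficients further.

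The main obstacle is the exact computation of the mixed coefficients $\alpha,\beta$ (the lattice indices against the rays of $\mathcal H_3$). The whole question is whether some choice of parameters makes the determinant condition fail for the sum. Once explicit numbers are in hand, the eigenvalue check is a finite $3\times3$ computation.
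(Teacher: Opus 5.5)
Your support-level claims are fine: for a weighted union $w_1L_{a_1,b_1}+w_2L_{a_2,b_2}$ with $a_1\neq a_2$ and $b_1\neq b_2$, projection-purity, facet-selectability and the failure of translation-admissibility go through as in the example. The gap is that you never exhibit a non-Lorentzian polynomial, and the route you sketch does not lead to one.

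Carrying out the computation you postponed, take $L_{a,b}$ with $\Lambda_1=\Lambda_2=\{0\}$ and $\Lambda_3=\mathcal H_3$. One gets $\deg^{(1,1,0)}=ab$, $\deg^{(1,0,1)}=a$, $\deg^{(0,1,1)}=b$, $\deg^{(0,0,2)}=1$, and $0$ for $(2,0,0)$ and $(0,2,0)$. Set $A=\sum_k w_ka_kb_k$, $B=\sum_k w_ka_k$, $C=\sum_k w_kb_k$, $D=\sum_k w_k$. Then the Hessian of ${\rm tvol}_{\Gamma,\underline\Lambda}$ is
\[
2\begin{pmatrix}0&A&B\\A&0&C\\B&C&D\end{pmatrix}.
\]
Since the $t_1^2$ and $t_2^2$ coefficients vanish, no coordinate principal $2\times2$ minor is positive definite: the $\{1,3\}$ and $\{2,3\}$ minors have determinants $-4B^2$ and $-4C^2$. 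The $\{1,2\}$ block has eigenvalues $\pm 2A$. By Cauchy interlacing there is therefore always one eigenvalue $\ge 2A$ and one $\le -2A$, and the middle one is positive exactly when the determinant $8A(2BC-AD)$ is \emph{negative}. So the correct criterion is $AD>2BC$, and aiming for a positive determinant is the wrong target.

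For equal weights $w_1=w_2=w$ one gets $AD-2BC=-2w^2(a_1b_2+a_2b_1)<0$. Thus with unit weights \emph{every} choice of the two planes gives a Lorentzian polynomial, including your suggested ``$(1,1)$ plus a plane with large $ab$''. Even the paper's $L_{1,1}+L_{2,2}$ paired with $\mathcal H_3$ gives $10t_1t_2+6t_1t_3+6t_2t_3+2t_3^2$, which is Lorentzian.

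What is missing is a mechanism that makes the $t_3^2$ coefficient large relative to the mixed ones. The ``incompatible directions'' of the mixed terms play no role, since $B$ and $C$ are sums of positive terms. There are two ways to supply this mechanism.
\begin{itemize}
\item Use unequal weights, heavier on the small plane. For instance, $\Gamma=2L_{1,1}+L_{9,9}$ gives ${\rm tvol}=166t_1t_2+22t_1t_3+22t_2t_3+3t_3^2$ with $AD=249>242=2BC$, hence two positive eigenvalues.
\item Follow your fallback of changing $\Lambda_3$, which is what the paper does. It keeps $L_{1,1}+L_{2,2}$ with unit weights but takes $\Lambda_3=(\{0\}\times\RR)+(\RR\times\{0\})$. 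The mixed numbers stay $a$ and $b$, but $\deg^{(0,0,2)}$ becomes $2$ per plane. The criterion then weakens to $\sum_k w_k\cdot\sum_k w_ka_kb_k>\sum_k w_ka_k\cdot\sum_k w_kb_k$. The paper's $\Gamma$ satisfies this ($AD=20>18=2BC$), giving its eigenvalues $-10$ and $9\pm\sqrt{73}$.
\end{itemize}
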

\begin{proof}
We consider the tropical variety of \autoref{ex_two_conditions_not_translation}.
Thus $\Gamma \subset N_\RR = \RR \times \RR \times \RR^2$ is the tropical cycle given as the sum of the rational linear spaces
$$
L_1\;:=\;\Span_\RR(\bv_1,\bv_2),
\qquad
\bv_1\;=\;(1,0,1,0),\quad \bv_2\;=\;(0,1,0,1),
$$
and
$$
L_2\;:=\;\Span_\RR(\bu_1,\bu_2),
\qquad
\bu_1\;=\;(2,0,1,0),\quad \bu_2\;=\;(0,2,0,1),
$$
both with weight one.
 Let $\Lambda_1=\{0\}\subset\RR$ and $\Lambda_2=\{0\}\subset\RR$.
In the third factor $\RR^2$, set $
\Lambda_3=\bigl(\{0\}\times\RR\bigr)+\bigl(\RR\times\{0\}\bigr) \subset \RR^2;
$
this is a positive tropical divisor: its intersection number with a tropical curve is the sum of the
degrees of the two coordinate projections, at least one of which is positive. 
Set $\underline{\Lambda} = \Lambda_1, \Lambda_2, \Lambda_3$.
It remains to compute the corresponding tropical volume polynomial.

Consider $N_\RR$ as the product of four real lines by decomposing the third factor:
$N_\RR=\RR\times\RR\times\RR^2=\left(\RR^1\right)^4$. 
For $1\leq i\leq4$, let
$D_i\subset N_\RR$ be the preimage of the origin $\{0\} \subset \RR$ in the $i$-th factor.
Set
$$
d_{ij}\;:=\;\deg(L_1\st D_i\st D_j)
\qquad\text{ and }\qquad
e_{ij}\;:=\;\deg(L_2\st D_i\st D_j).
$$
Notice that $d_{ii}=e_{ii}=0$. 
In terms of the absolute values of the corresponding coordinate minors, we have
{\footnotesize
$$
d_{12}=\left|\left|\begin{smallmatrix}1&0\\0&1\end{smallmatrix}\right|\right|=1,\quad
d_{13}=\left|\left|\begin{smallmatrix}1&1\\0&0\end{smallmatrix}\right|\right|=0,\quad
d_{14}=\left|\left|\begin{smallmatrix}1&0\\0&1\end{smallmatrix}\right|\right|=1,\quad
d_{23}=\left|\left|\begin{smallmatrix}0&1\\1&0\end{smallmatrix}\right|\right|=1,\quad
d_{24}=\left|\left|\begin{smallmatrix}0&0\\1&1\end{smallmatrix}\right|\right|=0,\quad
d_{34}=\left|\left|\begin{smallmatrix}1&0\\0&1\end{smallmatrix}\right|\right|=1
$$
}
and 
{\footnotesize
$$
e_{12}=\left|\left|\begin{smallmatrix}2&0\\0&2\end{smallmatrix}\right|\right|=4,\quad
e_{13}=\left|\left|\begin{smallmatrix}2&1\\0&0\end{smallmatrix}\right|\right|=0,\quad
e_{14}=\left|\left|\begin{smallmatrix}2&0\\0&1\end{smallmatrix}\right|\right|=2,\quad
e_{23}=\left|\left|\begin{smallmatrix}0&1\\2&0\end{smallmatrix}\right|\right|=2,\quad
e_{24}=\left|\left|\begin{smallmatrix}0&0\\2&1\end{smallmatrix}\right|\right|=0,\quad
e_{34}=\left|\left|\begin{smallmatrix}1&0\\0&1\end{smallmatrix}\right|\right|=1.
$$
}
The divisor coming from $\Lambda_3\subset\RR^2$ pulls back to $D_3+D_4$. 
Therefore, we obtain
\begin{align*}
	\deg_{\underline\Lambda}^{(1,1,0)}(\Gamma) &\;=\; \deg(\Gamma\st D_1\st D_2) \;=\; d_{12}+e_{12} \;=\; 5,\\
	\deg_{\underline\Lambda}^{(1,0,1)}(\Gamma) &\;=\; \deg\bigl(\Gamma\st D_1\st(D_3+D_4)\bigr)
	\;=\; d_{13}+d_{14}+e_{13}+e_{14} \;=\; 3, \\
	\deg_{\underline\Lambda}^{(0,1,1)}(\Gamma) &\;=\; \deg\bigl(\Gamma\st D_2\st(D_3+D_4)\bigr)
	\;=\; d_{23}+d_{24}+e_{23}+e_{24} \;=\; 3,\\
	\deg_{\underline\Lambda}^{(0,0,2)}(\Gamma) &\;=\; \deg\bigl(\Gamma\st(D_3+D_4)^2\bigr) \;=\; 2(d_{34}+e_{34}) \;=\; 4.
\end{align*}
As a consequence,
$$
{\rm tvol}_{\Gamma,\underline\Lambda}(t_1,t_2,t_3)
\;=\; 10t_1t_2+6t_1t_3+6t_2t_3+4t_3^2,
$$
whose Hessian is
$$
A\;=\;
\begin{pmatrix}
0&10&6\\
10&0&6\\
6&6&8
\end{pmatrix}.
$$
Its characteristic polynomial is $\chi_A(\lambda)=(\lambda+10)(\lambda^2-18\lambda+8)$.
Thus the eigenvalues are
$$
-10,\qquad 9-\sqrt{73},\qquad 9+\sqrt{73}.
$$
The last two are positive, and so ${\rm tvol}_{\Gamma,\underline\Lambda}$ is not Lorentzian.
\end{proof}

The next proposition shows that this failure persists for translation-admissible tropical varieties
connected in codimension one.

\begin{proposition}
\label{prop_non_lorentzian_translation}
There exists a translation-admissible tropical variety that is connected in codimension one and whose tropical volume polynomial, with respect to some sequence of positive tropical divisors, is not Lorentzian.
\end{proposition}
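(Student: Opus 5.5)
The plan is to keep the recipe of \autoref{prop_non_lorentzian_two_axioms}: take $N_\RR=\RR\times\RR\times\RR^2=(\RR^1)^4$ with $\Lambda_1=\Lambda_2=\{0\}$ and $\Lambda_3=(\{0\}\times\RR)+(\RR\times\{0\})$, so that $\Pi_3^{-1}(\Lambda_3)=D_3+D_4$. What must change is $\Gamma$. A union of two planes is never translation-admissible: if the planes are $L_1\ne L_2$ of dimension two, then $V=L_1$ gives $\Gamma+V=L_1\cup(L_1+L_2)$, which has pieces of different dimensions. So I would replace $\Gamma$ by a genuinely non-linear two-dimensional fan that keeps the feature that made the previous Hessian indefinite, namely facets whose coordinate minors are heavily unequal. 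Concretely, I would take $\Gamma$ to be the fan over a balanced graph on the sphere: rays $\rho_0,\ldots,\rho_s$ and facets $\operatorname{cone}(\rho_a,\rho_b)$ for the edges of a connected graph. Every ray should lie in at least two facets on "both sides", so that $\Gamma$ is connected in codimension one. I would first try a deformation of the tropical plane of $U_{3,4}$: four rays with all six two-dimensional cones, the rays stretched with unequal integer scalings in the four coordinates, and the weights rescaled so that balancing still holds. The scalings must not come from a monomial map, because a linear image of a realizable fan is realizable, and then the volume polynomial would be Lorentzian by \cite{BH}.

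Next I would compute the tropical volume polynomial by the fan displacement rule, i.e.\ by distributivity of stable intersection. For the fan $\Gamma$ and coordinate hyperplanes $D_i,D_j$, one has $\deg(\Gamma\st D_i\st D_j)=\sum_\sigma\omega(\sigma)\,|\det_{ij}(\sigma)|$, where the sum runs over the facets $\sigma$ for which $\sigma$ meets a generic translate of $D_i\cap D_j$, and $\det_{ij}(\sigma)$ is the $(i,j)$ coordinate minor of $\sigma$. The multidegrees $\deg^{(1,1,0)}$, $\deg^{(1,0,1)}$, $\deg^{(0,1,1)}$ and $\deg^{(0,0,2)}$ are then assembled exactly as before, giving a quadratic form
$$
a\,t_1t_2+b\,t_1t_3+c\,t_2t_3+e\,t_3^2 .
$$
The scalings should be chosen so that $a$ is large compared with $b$, $c$ and $e$ (as $10$ versus $6,6,8$ before). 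The Hessian then has two positive eigenvalues, which can be checked from its characteristic polynomial or from its principal minors.

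The main obstacle is translation-admissibility: for every rational subspace $V$, the set $\Gamma+V=\bigcup_\sigma(\sigma+V)$ must be pure-dimensional. I would argue pure-dimensionality as follows. Let $k$ be the largest value of $\dim(\sigma+V)$. If $\dim(\sigma+V)<k$ for some facet $\sigma$, I would show that $\sigma+V$ lies in the closure of the top-dimensional pieces. Take a ray $\rho\subset\sigma$. Balancing at $\rho$ gives facets $\tau\supset\rho$ whose images in $N_\RR/(\RR\rho+V)$ positively span a line through the image of $\sigma$. Using connectivity in codimension one, one walks from a maximal facet to $\sigma$ through such $\tau$, and this should show that the maximal pieces surround $\sigma+V$ from both sides. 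The failure mode is exactly the one in \autoref{ex_two_conditions_not_translation}: a facet whose span equals $V$ with no neighbours producing a $3$-dimensional piece around it. The construction must exclude this by making every facet's span meet every other facet's span in at least a ray. If this direct argument gets messy, the fallback is to enumerate the finitely many combinatorial types of $V$, according to which facet spans and rays it contains, and check purity case by case. The final step is to record that the chosen $\Gamma$ is connected in codimension one and translation-admissible, and that the Hessian computation exhibits two positive eigenvalues.
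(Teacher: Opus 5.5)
Your proposal is a plan, not a proof: it gives no explicit $\Gamma$, no multidegrees, and no verification of translation-admissibility. The real gap is that the one concrete candidate you name cannot work.

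Suppose four rays $v_1,\dots,v_4$ together with all six $2$-cones form a balanced fan. The rays cannot be linearly independent, since balancing at $v_1$ would need a positive relation among three independent vectors in $\RR^4/\RR v_1$. They cannot be coplanar either. So they span a rational $3$-space $W$. Balancing at each ray then forces a relation $\sum_a c_a v_a=0$ with all $c_a>0$. Consequently your fan is the image of the standard tropical plane in $\RR^3$ under an injective integral linear map, and balancing determines its weights up to a common scalar. Your $\Gamma$ is therefore a positive multiple of the tropicalization of an irreducible surface (a plane pushed forward by a monomial map). By exactly the Br\"and\'en--Huh argument you invoke, its tropical volume polynomial is Lorentzian for every choice of positive divisors. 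The requirement that the scalings ``not come from a monomial map'' can never be met.

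Your safeguard for translation-admissibility has the same defect, taken literally. If every two facet spans meet in a line, then, since pairwise intersecting lines in $\PP^3$ are concurrent or coplanar, one of two things happens. Either all spans lie in one $3$-space, and $\Gamma$ is a tropical hypersurface there. Or all spans contain one line $\ell$, and $\Gamma$ is the pullback of a tropical curve from $\RR^4/\ell$. In both cases $\Gamma$ is again a multiple of a realizable cycle, hence Lorentzian. The condition is also unnecessary: in the paper's example $L_{12}\cap L_{34}=0$. Finally, the ``walk'' argument for purity of $\Gamma+V$ is only a heuristic.

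The missing idea is to decouple the support from the weights. Translation-admissibility and connectivity in codimension one depend only on $|\Gamma|$, while Lorentzianity depends on the weights. So you should start from the support of the tropicalization of an irreducible variety, which carries many balanced weightings, and pick a non-realizable weighting. Such a support is translation-admissible by He's Lemma~2.13 and connected in codimension one by the structure theorem.

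The paper takes $S=\bigcup_{i<j}L_{ij}\subset\RR^4$, which is $|\Trop(Y)|$ for $Y$ cut out by two general $(1,1,1,1)$-forms. Since each $L_{ij}$ is balanced by itself, it sets $\Gamma=5L_{12}+L_{13}+L_{14}+L_{23}+L_{24}+5L_{34}$. With standard tropical lines on $\RR^2\times\RR^2$ this gives $5t_1^2+8t_1t_2+5t_2^2$, whose Hessian has eigenvalues $2$ and $18$.

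Your own divisors $D_1,D_2,D_3+D_4$ would also work with this support. For $i\neq j$ we have $\deg(L_{kl}\st D_i\st D_j)=1$ exactly when $\{k,l\}=\{i,j\}$, and $0$ otherwise. So for $\Gamma=\sum w_{ij}L_{ij}$ the Hessian has positive trace and determinant $16w_{12}\bigl[(w_{13}+w_{14})(w_{23}+w_{24})-w_{12}w_{34}\bigr]$. This is negative for the weights above, so the Hessian has two positive eigenvalues.
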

\begin{proof}
Write
$
N_\RR=\RR^2\times\RR^2$ with standard basis $\ee_1,\ldots,\ee_4 \in \NN^4$. 
For $1\leq i<j\leq4$, let $L_{ij}:=\Span_\RR(\ee_i,\ee_j)$. 
Consider the tropical cycle $\Gamma \subset N_\RR$ given as the sum
$$
\Gamma \;=\; 5L_{12} + L_{13} + L_{14} + L_{23} + L_{24} + 5L_{34}.
$$
The support of $\Gamma$ is equal to $S:=|\Gamma|=\bigcup_{1\leq i<j\leq4}L_{ij}$.

We also see $N_\RR = \left(\RR^1\right)^4$ as a product of four real lines. 
For $1\leq i\leq4$, let
$D_i\subset N_\RR$ be the preimage of the origin $\{0\} \subset \RR$ in the $i$-th factor.
Set $\PP = \left(\PP^1\right)^4$ with coordinates $[x_1:y_1] \times  [x_2:y_2] \times [x_3:y_3] \times [x_4:y_4]$.
Let $f$ and $g$ be two general elements of $\HH^0\!\left(\PP, \OO_{\PP}(1,1,1,1)\right)$.
Let $\KK = \CC\{\{t\}\}=\bigcup_{q \ge 1}\CC((t^{1/q}))$ be the field of Puiseux series with its natural valuation ${\rm val} : \KK \rightarrow \QQ \cup \{\infty\}$.
Let $T = (\KK^*)^4$ be the natural torus on $\PP_\KK = \PP \times_\CC \KK = \left(\PP_\KK^1\right)^4$.
Consider the irreducible varieties $Y_1 = \left(V(f) \times_\CC \KK\right) \cap T$ and $Y_2 = \left(V(g) \times_\CC \KK\right) \cap T$.
By writing $T = \Spec(\KK[z_1^{\pm 1}, z_2^{\pm 1}, z_3^{\pm 1}, z_4^{\pm 1}])$ with $z_i = x_i/y_i$, we get 
$$
Y_1 = V\Bigg(\sum_{0 \le n_i \le 1} a_\bn\, z_1^{n_1}z_2^{n_2}z_3^{n_3}z_4^{n_4}\Bigg) \quad \text{ and } \quad Y_2 = V\Bigg(\sum_{0 \le n_i \le 1} b_\bn\, z_1^{n_1}z_2^{n_2}z_3^{n_3}z_4^{n_4}\Bigg),
$$
where the coefficients $a_\bn, b_\bn \in \CC \subset \KK$ are general. 
The tropicalizations of $Y_1$ and $Y_2$ are equal to 
$$
\Trop(Y_1) \;=\; \Trop(Y_2) \;=\; D_1 + D_2 + D_3 + D_4
$$
(see \cite[Proposition 3.1.10]{MS}).
Notice that $D_i^2=0$ and $D_i\st D_j=L_{k\ell}$ when
$\{k,\ell\}=[4]\setminus\{i,j\}$.
Let $\bu = (u_1,u_2,u_3,u_4) \in T$ be a general element with ${\rm val}(u_i) = 0$.
Consider 
$$
\bu^{-1}\cdot g \;:=\; g(u_1^{-1}x_1,y_1,u_2^{-1}x_2,y_2,u_3^{-1}x_3,y_3,u_4^{-1}x_4,y_4) \;\in\; \HH^0\!\left(\PP_\KK, \OO_{\PP_\KK}(1,1,1,1)\right).
$$
By utilizing Bertini's theorem, we get that 
$$
Y \;=\; Y_1 \cap \bu Y_2 \;=\; V\big(f,\bu^{-1}\cdot g \big) \cap T
$$ is an irreducible variety. 
Then \cite[Theorem~3.6.1]{MS} gives
$$
\Trop(Y) \;=\; \Trop(Y_1 \cap \bu Y_2) \;=\;  \left(D_1+D_2+D_3+D_4\right)^2 \;=\; 2\sum_{1\leq i<j\leq4}L_{ij}.
$$
In particular, $|\Trop(Y)|=S$. 
The tropicalization $\Trop(Y)$ is connected in codimension one by \cite[Theorem~3.5.1]{MS} and translation-admissible by \cite[Lemma~2.13]{He}. 
Both properties depend only on the support, and so $\Gamma$ is connected in codimension one and translation-admissible.

It remains to compute a non-Lorentzian tropical volume polynomial.
 Let $\mathcal{H}_1 \subset\RR^2$ and $\mathcal{H}_2 \subset \RR^2$ be the standard tropical lines in the first and second factor of $N_\RR = \RR^2 \times \RR^2$, respectively (see \autoref{rem_std_trop_hyper}). 
 Let $\underline{\mathcal{H}}=\mathcal{H}_1,\mathcal{H}_2$. 
 We can compute that 
$$
\deg_{\underline{\mathcal{H}}}^{(2,0)}(\Gamma)\;=\;5,\qquad \deg_{\underline{\mathcal{H}}}^{(1,1)}(\Gamma)\;=\;4,\qquad \deg_{\underline{\mathcal{H}}}^{(0,2)}(\Gamma)\;=\;5.
$$
As a consequence,
$$
{\rm tvol}_{\Gamma,\underline{\mathcal{H}}}(t_1,t_2)
=5t_1^2+8t_1t_2+5t_2^2,
$$
and the corresponding Hessian is
$$
A \;=\;
\begin{pmatrix}
10&8\\
8&10
\end{pmatrix}.
$$
The eigenvalues of $A$ are $2$ and $18$. 
Therefore, the tropical volume polynomial ${\rm tvol}_{\Gamma,\underline{\mathcal{H}}}(t_1,t_2)$ is not Lorentzian.
\end{proof}

\section{Augmented Bergman fans and Lorentzian tropical volume polynomials}
\label{sec_augmented}

In this section, we study the tropical multidegrees of the augmented Bergman fan of a polymatroid. 
Contrary to the counterexamples given in \autoref{sec_non_lorentzian}, we show that in the case of augmented Bergman fans we do get Lorentzian tropical volume polynomials.

Let $\sP$ be a polymatroid on $[p]$ with cage $\bm=(m_1,\ldots,m_p) \in \ZZ_+^p$.
Let $m = m_1 + \cdots + m_p$.
 Choose pairwise disjoint sets $E_i$ with $|E_i|=m_i$, set $E:=\bigsqcup_iE_i$, and let $\pi:E\to[p]$ be the map with fibers $\pi^{-1}(i)=E_i$.
A subset $F\subseteq[p]$ is a \emph{flat} if $\rk_\sP(F\cup\{i\})>\rk_\sP(F)$ for every $i\notin F$; it is \emph{proper} if $F\neq[p]$.
For a finite set $A$, write $\RR^A:=\prod_{i\in A}\RR$, let $\big\{\ee_i\big\}_{i\in A}$ be its standard basis, and set $\ee_S:=\sum_{i\in S}\ee_i$ for all $S \subseteq A$.

\begin{definition}[{Augmented Bergman fan of a polymatroid; Eur-Larson \cite[Definition~3.7]{ELP}}]
	A subset $S \subseteq E$ and a flag $\mathscr{F} = \big\lbrace F_1 \subsetneq F_2 \subsetneq \cdots \subsetneq F_k  \big\rbrace$ of proper flats of $\sP$ are said to form a \emph{compatible pair}, written
	$S\leq\mathscr{F}$, if the following two conditions are satisfied:
	\begin{enumerate}[\rm\;\; (a)]
		\item\label{cond_a} $\rk_\sP(\pi(T))\geq |T|$ for every $T\subseteq S$.
		\item\label{cond_b} $\rk_\sP(F\cup\pi(T))>\rk_\sP(F)+|T|$ for every $F\in\mathscr{F}$ and every nonempty
		$T\subseteq S\setminus\pi^{-1}(F)$.
	\end{enumerate}
	For a compatible pair $S\leq\mathscr{F}$, we consider the cone 
	$$
	\sigma_{S\leq\mathscr{F}}
	\;:=\;
	\Cone\big(\ee_s \mid s\in S\big)
	\;+\;
	\Cone\big(-\ee_{E\setminus\pi^{-1}(F)} \mid F\in\mathscr{F} \big).
	$$ 
	The \emph{augmented Bergman fan} of $\sP$ is
	$$
		\Sigma_\sP
		\;:=\;
		\big\{
		\sigma_{S\leq\mathscr{F}}
		\;\mid\;
		S\leq\mathscr{F}\text{ compatible}
		\big\}
		\;\subseteq\; \RR^E,
	$$
	with weight one on every facet.
\end{definition}

Let $\mathscr B_\bm$ be the Boolean polymatroid on $[p]$ with cage $\bm$, whose rank function is
$
	\rk_{\mathscr B_\bm}(I):=\sum_{i\in I}m_i
$
for all $I \subseteq [p]$.
The augmented Bergman fan of $\mathscr B_\bm$ is the \emph{polystellahedral fan} 
$
	\Sigma_\bm := \Sigma_{\mathscr B_\bm}
$
with cage $\bm$.
This is a smooth projective fan, and we write $X_\bm:=X_{\Sigma_\bm}$ for its associated \emph{polystellahedral toric variety} (see \cite[Proposition~2.2, Proposition~2.3]{ELP}).

In the remark below, we recall some of the basic properties of the augmented Bergman fan of a polymatroid. 

\begin{remark}
	Let $\sP$ be a rank-$r$ polymatroid with cage $\bm=(m_1,\ldots,m_p)$.
	The fan $\Sigma_\sP$ is a pure balanced subfan of the polystellahedral fan $\Sigma_\bm$ (see \cite[Theorem 3.8]{ELP}).
	The support of $\Sigma_\sP$ equals the support of the augmented Bergman fan of the matroid given as the multisymmetric lift $M_\pi(\sP)$ (see \cite[Theorem~3.8]{ELP}). 
	In particular, $\dim\left(\Sigma_\sP\right)=\rk(\sP)$. 
	The zero extension of $\Sigma_\sP$ to $\Sigma_\bm$ defines the augmented Bergman class $\left[\Sigma_\sP\right] \in A_{r}\left(X_{\Sigma_\bm}\right)$ (see \cite[Definition~3.12]{ELP}).
\end{remark}

The following remark provides the basic motivation for considering the \emph{augmented} Bergman fan of a polymatroid, rather than the Bergman fan of a polymatroid. 

\begin{remark}
	Let $\sP$ be a rank-$r$ polymatroid with cage $\bm=(m_1,\ldots,m_p)$.
	 The equality $\dim\left(\Sigma_\sP\right)=r$ gives the correct dimension for the multidegrees considered below.
	  Suppose that $\sP$ is realized over  $\CC$ by an $r$-dimensional subspace $L\subseteq\bigoplus_iV_i$, where $\dim_\CC(V_i)=m_i$ and
	$\rk_\sP(I)=\dim_\CC\left(\IM\left(L\to\bigoplus_{i\in I}V_i\right)\right)$ for every $I\subseteq[p]$. 
	The \emph{augmented wonderful compactification} $W_L$ of $L$ is given as the closure 
	$$
	W_L \;:=\; \overline{\;\IM\left(L \;\longrightarrow\;
	\prod_{\varnothing \subsetneq S\subseteq[p]}
	\PP\!\left(\bigoplus_{i\in S}V_i\oplus\CC\right)\right)\;}.
	$$
	We have that $W_L$ embeds in the corresponding polystellahedral variety $X_\bm$ and that $\left[W_L\right] \;=\; \left[\Sigma_\sP\right]$ in $A_r(X_\bm)$ (see \cite[Definition~1.2 and Proposition~3.20]{ELP}). 
	By projecting to the singleton factors, we obtain a proper birational morphism $W_L\to Y_L$, where $Y_L$ is the closure of the image of the subspace $L$ in $\prod_i\PP(V_i\oplus\CC) = \PP^{m_1} \times \cdots \times \PP^{m_p}=:\PP$.
	 Finally, the support of positive multidegrees of $Y_L$ is given by 
	 $$
	 \msupp_\PP\left(Y_L\right) \;=\; B(\sP) \cap \NN^p
	 $$
	 (see \cite[Proposition 5.4]{CCLMZ}, \cite[Proposition 7.15]{CCRMM}).
\end{remark}

The main result of this section is the following. 

\begin{theorem}
	\label{thm_augmented_main}
	Let $\sP$ be a rank-$r$ polymatroid on $[p]$ with cage
	$\bm=(m_1,\ldots,m_p) \in \ZZ_+^p$.
	Let
	$\underline\Lambda=\Lambda_1,\ldots,\Lambda_p$ where each
	$\Lambda_i\subset\RR^{E_i}$ is a positive tropical divisor. 
	Then:
	\begin{enumerate}[\rm (i)]
		\item $\Sigma_\sP$ is projection-pure and facet-selectable.
		\item For every $\bn=(n_1,\ldots,n_p)\in\NN^p$ with $|\bn|=r$, we have
		$$
		\deg_{\underline{\Lambda}}^\bn(\Sigma_\sP)>0 \quad \text{ if and only if } \quad \bn \in B(\sP).
	$$
	\item The tropical volume polynomial
	$$
	{\rm tvol}_{\Sigma_\sP, \underline{\Lambda}}(\ttt) \;=\; \sum_{|\bn| = r} \frac{r!}{n_1!\cdots n_p!}  \, \deg_{\underline{\Lambda}}^\bn(\Sigma_\sP) \, t_1^{n_1} \cdots t_p^{n_p}
	$$
	is Lorentzian.
	\end{enumerate}
\end{theorem}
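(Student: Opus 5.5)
The plan is to prove (i) directly from the combinatorics of the cones $\sigma_{S\le\mathscr F}$, deduce (ii) from \autoref{thm_crit_two_cond}, and obtain (iii) from the Kähler package for the augmented Chow ring of $\sP$ \cite{ELP}, transported through the toric machinery of \autoref{sect_toric}.

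\emph{Step 1: projections of cones.} For a compatible pair $S\le\mathscr F$ and $I\subseteq[p]$, the projection $\Pi_I(\sigma_{S\le\mathscr F})$ is spanned by $\ee_s$ for $s\in S\cap\pi^{-1}(I)$ and by the restrictions $-\ee_{\pi^{-1}(I\setminus F)}$ for $F\in\mathscr F$. This gives an explicit formula for $\dim\Pi_I(\sigma_{S\le\mathscr F})$.

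\emph{Step 2: facets realizing $\rk_\sP$.} I will show that $\dim\Pi_I(\sigma)\le\rk_\sP(I)$ for every cone $\sigma$, using conditions (a) and (b). I will also show that for every $\bn\in B(\sP)\cap\NN^p$ there is a single facet $\sigma$ with $|\bn|_J\le\dim\Pi_J(\sigma)$ for all $J$. My candidate is the facet with $\mathscr F=\varnothing$ and $S$ chosen with $|S\cap E_i|=n_i$. Condition (a) is then exactly $|\bn|_J\le\rk_\sP(J)$, and $\sigma=\Cone(\ee_s\mid s\in S)$ is an $r$-dimensional facet with $\dim\Pi_J(\sigma)=|\bn|_J$. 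Since $\rk_\sP(J)\ge|\bn|_J$ and equality is attained by a suitable base, these facets also give $\dim\Pi_J(\Sigma_\sP)=\rk_\sP(J)$. Facet-selectability then follows once we know $r_{\Sigma_\sP}=\rk_\sP$.

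\emph{Step 3: projection-purity, the main obstacle.} I need $\Pi_I(\Sigma_\sP)$ to be pure of dimension $\rk_\sP(I)$. My first attempt is to identify $\Pi_I(\Sigma_\sP)$ with the support of the augmented Bergman fan of the restriction $\sP|_I$, with cage $\bm|_I$. For the inclusion ``$\supseteq$'', a compatible pair for $\sP|_I$ should lift to one for $\sP$: keep $S$, and replace each flat $F$ of $\sP|_I$ by its closure in $\sP$. For the inclusion ``$\subseteq$'', I need that the projection of each cone lies in the support of $\Sigma_{\sP|_I}$. If a direct verification is messy, I will fall back on the multisymmetric lift. That approach reduces the statement to matroids, where projection to the coordinates $\pi^{-1}(I)$ corresponds to matroid restriction/deletion, and where the support of the augmented Bergman fan is characterized by valuated/flag conditions (points $x$ whose sublevel sets are compatible with flats and whose positive part is independent). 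These conditions are visibly preserved under deletion, and pureness then follows from \cite[Theorem~3.8]{ELP} applied to $\sP|_I$.

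\emph{Step 4: (ii).} With (i) in hand, (ii) is \autoref{thm_crit_two_cond} together with $r_{\Sigma_\sP}=\rk_\sP$, so the polytope cut out by the inequalities is exactly $B(\sP)$.

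\emph{Step 5: (iii).} Replace each $\Lambda_i$ by $\Rec(\Lambda_i)$ using \autoref{prop_multideg_bdeq}. Then take a common smooth refinement $\widehat\Sigma$ of $\Sigma_\bm$ and of $\prod_i\Sigma_i$ (with $\Sigma_i$ from \autoref{lem_refine_div}). The multidegrees become
\[
\int D_1^{n_1}\cdots D_p^{n_p}\cdot[\Sigma_\sP],
\]
where each $D_i$ is basepoint-free, hence nef. The restrictions of these nef classes to $\Sigma_\sP$ are limits of ample classes on the augmented Bergman fan. The Hodge–Riemann relations in degree one \cite{ELP}, which extend to refinements by the invariance of the Kähler package under subdivision, then show that every quadratic form $\partial^{d-2}$ of the volume polynomial has at most one positive eigenvalue. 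This follows the Brändén–Huh argument \cite[Theorem~4.6]{BH} for nef classes in a ring satisfying Hodge–Riemann in degree one. M-convexity of the support is (ii), so the polynomial is Lorentzian.
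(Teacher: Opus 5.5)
Your proposal follows essentially the same route as the paper. For (i), you identify $\Pi_I(|\Sigma_\sP|)=|\Sigma_{\sP|I}|$ by restricting compatible pairs and lifting flats of $\sP|I$ via closure in $\sP$; the paper does the restriction direction directly with the diminishing-returns inequality, and your multisymmetric-lift fallback is the alternative it mentions via \cite[Theorem~3.8]{ELP} and \cite[Proposition~3.1]{BHMPW}. You then use the coordinate facet $\Cone(\ee_s\mid s\in B)$ with $|B\cap E_i|=n_i$ for facet-selectability. Part (ii) comes from \autoref{thm_crit_two_cond}. Part (iii) comes from the K\"ahler package of \cite{ELP}, its invariance under subdivision \cite{ADH}, and a Br\"and\'en--Huh type argument, which the paper simply cites as \cite[Theorem~8.3]{EHL}.

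In (iii), tighten two things. First, take the common refinement $\widehat\Sigma$ to be smooth and \emph{projective}, as the paper does, so that ample classes exist on the refined fan and the subdivision-invariance theorem applies. Second, the Br\"and\'en--Huh step needs degree-one Hodge--Riemann for mixed products $\ell_1\cdots\ell_{r-2}$ of ample classes. That comes from the Lefschetz property of all stars (the content of \cite[Theorem~8.3]{EHL}), not from Hodge--Riemann for a single $\ell$.
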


Before proving this theorem, we need the following technical lemma, which shows that augmented Bergman fans satisfy the two conditions that we introduced (i.e., projection-purity and facet-selectability).

\begin{lemma}
	\label{lem_aug_projection_properties}
	Let $\sP$ be as in \autoref{thm_augmented_main}.
	For each $I\subseteq[p]$, let $\sP|I$ denote the polymatroid on $I$ with cage $\big(m_i\big)_{i\in I}$ and rank function
	$\rk_{\sP|I}(A):=\rk_\sP(A)$ for all $A \subseteq I$.
	Then:
	\begin{enumerate}[\rm (i)]
		\item $\Pi_I\left(\left|\Sigma_\sP\right|\right) = \left|\Sigma_{\sP|I}\right|$.
		\item $\dim\left(\Pi_I\left(\Sigma_\sP\right)\right) = \rk_\sP(I)$.
		\item $\Sigma_\sP$ is projection-pure and facet-selectable.
	\end{enumerate} 
\end{lemma}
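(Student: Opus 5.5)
The plan is to prove (i) cone by cone, read off (ii) from (i) together with $\dim(\Sigma_{\sP'})=\rk(\sP')$, and then prove (iii) directly, using a cone spanned only by coordinate vectors for facet-selectability. Throughout, write $E_I:=\pi^{-1}(I)$ and identify $\prod_{i\in I}\RR^{E_i}=\RR^{E_I}$. Since the support of a fan is the union of its cones, (i) reduces to a statement about individual cones.

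\emph{Inclusion $\subseteq$ in (i).} I compute $\Pi_I(\sigma_{S\le\mathscr F})$ for each compatible pair $S\le\mathscr F$ of $\sP$. The generators project as follows: $\Pi_I(\ee_s)=\ee_s$ if $s\in E_I$ and $0$ otherwise, and $\Pi_I(-\ee_{E\setminus\pi^{-1}(F)})=-\ee_{E_I\setminus\pi^{-1}(F\cap I)}$. Hence
$$\Pi_I(\sigma_{S\le\mathscr F})=\sigma_{S\cap E_I\,\le\,\mathscr F_I},$$
where $\mathscr F_I$ is the chain of distinct sets $F\cap I\neq I$. Generators with $F\cap I=I$ vanish, and repeated sets give the same ray. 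It remains to check that $S\cap E_I\le\mathscr F_I$ is compatible for $\sP|I$.
\begin{itemize}
\item Flats restrict to flats. For $i\in I\setminus F$, submodularity gives $\rk((F\cap I)\cup i)+\rk(F)\ge\rk(F\cup i)+\rk(F\cap I)$. Combined with $\rk(F\cup i)>\rk(F)$, this yields $\rk((F\cap I)\cup i)>\rk(F\cap I)$.
\item Condition (a) is inherited verbatim.
\item Condition (b) also follows from submodularity. For nonempty $T\subseteq (S\cap E_I)\setminus\pi^{-1}(F)$, apply it to the sets $F\cup\pi(T)$ and $(F\cap I)\cup\pi(T)$, whose intersection is $(F\cap I)\cup\pi(T)$ and whose union is $F\cup\pi(T)$ because $\pi(T)\subseteq I$. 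This gives $\rk((F\cap I)\cup\pi(T))-\rk(F\cap I)\ge \rk(F\cup\pi(T))-\rk(F)>|T|$.
\end{itemize}

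\emph{Inclusion $\supseteq$ in (i).} This is the step I expect to need the most care. Given a compatible pair $S'\le\mathscr G$ for $\sP|I$, I lift each proper flat $G$ of $\sP|I$ to its $\sP$-closure $\cl_\sP(G)$ and keep $S'$ unchanged.
\begin{itemize}
\item $\cl_\sP(G)\cap I=G$. Any $i\in I$ with $\rk(G\cup i)=\rk(G)$ already lies in $G$, since $G$ is a flat of $\sP|I$.
\item The lifted chain is a valid flag. Distinct $G$ give distinct closures, and $\cl_\sP(G)\ne[p]$ because $G\ne I$.
\item Condition (a) is unchanged.
\item Condition (b) holds: $\rk(\cl_\sP(G)\cup\pi(T))\ge\rk(G\cup\pi(T))>\rk(G)+|T|=\rk(\cl_\sP(G))+|T|$.
\end{itemize}
By the projection formula above, the lifted cone projects onto $\sigma_{S'\le\mathscr G}$, which gives the reverse inclusion.

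\emph{Part (ii).} This follows from (i) and the fact, recalled in the remarks above, that $\dim(\Sigma_{\sP'})=\rk(\sP')$ for any polymatroid $\sP'$. Applied to $\sP'=\sP|I$, this gives $\dim(\Pi_I(\Sigma_\sP))=\rk_{\sP|I}(I)=\rk_\sP(I)$.

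\emph{Part (iii).} Projection-purity is immediate from (i), since $\Sigma_{\sP|I}$ is a tropical variety; the case $I=\varnothing$ is trivial. For facet-selectability, take $\bn$ with $|\bn|=r$ and $|\bn|_I\le\rk_\sP(I)$ for all $I$. Since $n_i\le\rk_\sP(i)\le m_i$, I can choose $S\subseteq E$ with $|S\cap E_i|=n_i$, and take the empty flag. Condition (a) holds because $|T|\le|\bn|_{\pi(T)}\le\rk_\sP(\pi(T))$ for every $T\subseteq S$, and condition (b) is vacuous. The cone $\Cone(\ee_s\mid s\in S)$ then lies in $\Sigma_\sP$ and has dimension $|S|=r$, so it is a facet. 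Its projection $\Pi_I$ is spanned by the vectors $\ee_s$ with $s\in S\cap E_I$, so it has dimension exactly $|\bn|_I$, as facet-selectability requires.
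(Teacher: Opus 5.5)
Your proof is correct and is essentially the paper's: you restrict compatible pairs to $S\cap E_I\le\mathscr F_I$ for $\subseteq$, lift flats via $\cl_\sP(G)$ for $\supseteq$, read off (ii) from (i), and use the coordinate facet $\Cone(\ee_s\mid s\in S)$ with the empty flag for facet-selectability. There is one slip to fix. In condition (b) for the inclusion $\subseteq$, the two sets you feed into submodularity are nested, so that application says nothing. Apply it instead to $F$ and $(F\cap I)\cup\pi(T)$: their intersection is $F\cap I$ (since $\pi(T)\cap F=\varnothing$) and their union is $F\cup\pi(T)$. This yields exactly your inequality $\rk_\sP((F\cap I)\cup\pi(T))-\rk_\sP(F\cap I)\ge\rk_\sP(F\cup\pi(T))-\rk_\sP(F)$, which is the diminishing-returns inequality the paper invokes.
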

\begin{proof}
	(i) 
	This part could be obtained by combining \cite[Theorem 3.8]{ELP} and \cite[Proposition 3.1]{BHMPW}.
	However, for completeness we give a short and self-contained proof. 
	
	We repeatedly use the  diminishing-returns inequality
	$$
	\rk_\sP(A\cup C)-\rk_\sP(A)
	\;\geq\; \rk_\sP(B\cup C)-\rk_\sP(B)
	$$
	for all $A\subseteq B\subseteq[p]$ and
	$C\subseteq[p]$.
	Fix $I\subseteq[p]$, put $E_I:=\bigsqcup_{i\in I}E_i$, and let
	$\pi_I:=\pi|_{E_I}$. 
	Let $S\leq\mathscr{F}$ be a compatible pair of $\sP$. 
	We set $S_I:=S\cap E_I$, and we obtain $\mathscr{F}_I$ from $\big\lbrace F\cap I \big\rbrace_{F\in\mathscr{F}}$ by deleting repetitions and the instances where $F \cap I = I$.
	We claim that each member $F\cap I$ of $\mathscr{F}_I$ is a proper flat of $\sP|I$; indeed, if $i\in I\setminus F$, then
	$$
		\rk_\sP\left((F\cap I)\cup\{i\}\right)-\rk_\sP\left(F\cap I\right)
		\;\geq\; \rk_\sP\left(F\cup\{i\}\right)-\rk_\sP\left(F\right)>0.
	$$
	The pair $S_I\leq\mathscr{F}_I$ inherits the condition~\hyperref[cond_a]{(a)}. 
	For condition~\hyperref[cond_b]{(b)}, let $A=F\cap I$ and $\varnothing\neq T\subseteq S_I\setminus\pi_I^{-1}(A)$,  then
	$T\subseteq S\setminus\pi^{-1}(F)$ and so we obtain
	$$
		\rk_\sP\left(A\cup\pi_I(T)\right)-\rk_\sP\left(A\right)
		\;\geq\; \rk_\sP\left(F\cup\pi(T)\right)-\rk_\sP\left(F\right) \;>\; |T|.
	$$
	Therefore $S_I\leq\mathscr{F}_I$ is a compatible pair of $\sP|I$. 
	The projection $\Pi_I : N_\RR \rightarrow \RR^{E_I}$ sends $\ee_s$ to $\ee_s$ or $0$
	according to whether $s\in E_I$ or not, and sends
	$-\ee_{E\setminus\pi^{-1}(F)}$ to
	$-\ee_{E_I\setminus\pi_I^{-1}(F\cap I)}$. After zero vectors and duplicate generators
	are removed, these are precisely the generators of $\sigma_{S_I\leq\mathscr{F}_I}$, and thus we obtain
	$\Pi_I\left(\sigma_{S\leq\mathscr{F}}\right)=\sigma_{S_I\leq\mathscr{F}_I}$.
	This shows the inclusion
	$\Pi_I(|\Sigma_\sP|)\subseteq|\Sigma_{\sP|I}|$.

	We now show the reverse inclusion. 
	Recall 
	$\cl_{\sP}(A):=\left\{i\in[p]\mid
	\rk_\sP(A\cup\{i\})=\rk_\sP(A)\right\}$. 
	Let $S'\leq\mathscr{G}$ be a compatible pair of $\sP|I$. 
	Since every $G\in\mathscr{G}$ is a flat of $\sP|I$, we have $\operatorname{cl}_{\sP}(G)\cap I=G$.
	Hence we get that
	$\widehat{\mathscr{G}}:=\big\lbrace\operatorname{cl}_{\sP}(G)\big\rbrace_{G\in\mathscr{G}}$ is a strict chain of proper
	flats of $\sP$. 
	The condition~\hyperref[cond_a]{(a)} for $S'\leq\widehat{\mathscr{G}}$ is inherited. If
	$G\in\mathscr{G}$ and
	$\varnothing\neq T\subseteq S'\setminus\pi^{-1}\left(\operatorname{cl}_{\sP}(G)\right)$, then
	$T\subseteq S'\setminus\pi_I^{-1}(G)$ and
	$$
		\rk_\sP\left(\operatorname{cl}_{\sP}(G)\cup\pi(T)\right)
		-\rk_\sP\left(\operatorname{cl}_{\sP}(G)\right)
		\;=\; \rk_\sP\left(G\cup\pi_I(T)\right)-\rk_\sP\left(G\right) \;>\; |T|.
	$$
	Thus condition~\hyperref[cond_b]{(b)} also holds. 
	Moreover,
	$\Pi_I\left(-\ee_{E\setminus\pi^{-1}(\operatorname{cl}_{\sP}(G))}\right) =-\ee_{E_I\setminus\pi_I^{-1}(G)}$, and so the lifted cone projects to $\sigma_{S'\leq\mathscr{G}}$. 
	This proves the reverse inclusion $\Pi_I(|\Sigma_\sP|)\supseteq|\Sigma_{\sP|I}|$.

\smallskip

	(ii) By part (i), we obtain $\dim\left(\Pi_I(\Sigma_\sP)\right)=\dim\left(\Sigma_{\sP|I}\right)=\rk_{\sP|I}(I)=\rk_\sP(I)$.

\smallskip

	(iii) Since $\Sigma_{\sP|I}$ is a tropical fan, part~(i) also proves that
	$\Sigma_\sP$ is projection-pure.
	It remains to prove that $\Sigma_\sP$ is facet-selectable.
	 By part (ii), let $\bn = (n_1,\ldots,n_p)\in\NN^p$ such that
	$|\bn| = \dim(\Sigma_\sP) = \rk_\sP([p])$ and $|\bn|_I\leq  \dim(\Pi_I(\Sigma_\sP)) = \rk_\sP(I)$ for all $I\subseteq[p]$. 
	Since $n_i\leq\rk_\sP(\{i\})\leq m_i$, we can choose $B_i\subseteq E_i$ with $|B_i|=n_i$, and set
	$B:=\bigsqcup_iB_i$. 
	For every $T\subseteq B$, we obtain
	$$
		\left|T\right| \;\leq\; \sum_{i\in\pi(T)} \left|B_i\right| \;=\; \left|\bn\right|_{\pi(T)} \;\leq\; \rk_\sP(\pi(T)).
	$$
	Thus $B\leq\varnothing$ is a compatible pair of $\sP$: condition~\hyperref[cond_a]{(a)} holds, while condition~\hyperref[cond_b]{(b)} is vacuous.
	The coordinate cone $\sigma_{B \le \varnothing}:=\cone(\ee_s)_{s\in B}$ has dimension
	$|B|=\rk_\sP([p])=\dim\left(\Sigma_\sP\right)$, and so it is a facet. 
	Finally, we have
	$$
	\dim\left(\Pi_I(\sigma_{B \le \varnothing})\right) \;=\; \left|B\cap\pi^{-1}(I)\right| \;=\; \left|\bn\right|_I
	$$ 
	for all $I\subseteq[p]$.
	Therefore $\Sigma_\sP$ is facet-selectable, and so the proof of the lemma is complete. 
\end{proof}

We are now ready to prove our main result regarding augmented Bergman fans.

\begin{proof}[Proof of \autoref{thm_augmented_main}]
	Parts (i) and (ii) follow from  \autoref{lem_aug_projection_properties} 
	and \autoref{thm_crit_two_cond}.

	We now prove part (iii).
	Here we use the same ideas as in \autoref{sect_toric} and transfer tropical varieties and stable intersections to the setting of toric methods and Minkowski weights. 
	Then the result will be a formal consequence of \cite[Theorem 8.3]{EHL}, \cite[Theorem 3.17]{ELP} and \cite[Theorem 1.6]{ADH}.
	
	By \autoref{thm_AHR} and \autoref{prop_multideg_bdeq}, we may assume that each $\Lambda_i \subset \RR^{E_i}$ is a positive tropical fan divisor.
	After applying \autoref{lem_refine_div}, we obtain a smooth complete fan $\Sigma_i \subset \RR^{E_i}$ carrying
	$\Lambda_i$ and an associated basepoint-free torus-invariant Cartier divisor $D_i$ on $X_{\Sigma_i}$. 
    A common complete refinement of $\Sigma_\bm$ and $\Sigma_1 \times \cdots \times \Sigma_p$ admits a smooth projective refinement $\widehat\Sigma$ (see \cite[Theorem~6.1.18 and Theorem~11.1.9]{CLS}). 
    Let $\widehat{X} := X_{\widehat{\Sigma}}$ and $\widehat{D}_i$ be the pullback of $D_i$ to $\widehat{X}$.
    Let $\widehat\Sigma_\sP$ be the induced refinement of $\Sigma_\sP$.
    By utilizing the Fulton-Sturmfels correspondence \cite{FS}, we obtain
	$$
		{\rm tvol}_{\Sigma_\sP,\underline\Lambda}(\ttt)
		\;=\; \int_{\widehat{X}}
			\left(\widehat{D}_1t_1+\cdots+\widehat{D}_pt_p\right)^r \cdot \big[\widehat\Sigma_\sP\big].
	$$
	By \cite[Theorem 3.17]{ELP} and \cite[Theorem 1.6]{ADH}, $\widehat{\Sigma}_\sP$ is a Lefschetz fan. 
	Finally, \cite[Theorem 8.3]{EHL} shows that the tropical volume polynomial ${\rm tvol}_{\Sigma_\sP,\underline\Lambda}(\ttt)$ is Lorentzian. 
\end{proof}

For standard tropical hyperplanes, the following corollary recovers \cite[Corollary~1.4]{ELP} in a tropical form.
Our proof relies on \autoref{thm_augmented_main} which determines positivity and then on the well-known fact that the stable intersection of tropical linear spaces is either empty or again a tropical linear space (see \cite{Speyer,Hampe,EHL,ELP}).

\begin{corollary}[{Standard tropical hyperplanes; Eur--Larson \cite[Corollary 1.4]{ELP}}]
	\label{cor_augmented_standard_hyperplanes}
	In the setting of \autoref{thm_augmented_main}, let $\mathcal H_i\subset\RR^{E_i}$ be the standard tropical hyperplane and set $\underline{\mathcal H}:=\mathcal H_1,\ldots,\mathcal H_p$. 
	For
	$\bn\in\NN^p$ with $|\bn|=r$, we have
	$$
		\deg_{\underline{\mathcal H}}^\bn(\Sigma_\sP)
		\;=\;
		\begin{cases}
			1& \quad \text{if \;\;}\bn\in B(\sP),\\
			0& \quad \text{otherwise}.
		\end{cases}
	$$
	In particular, the corresponding tropical volume polynomial is given by 
	$$
		{\rm tvol}_{\Sigma_\sP,\underline{\mathcal H}}(\ttt)
		\;=\;
		r!\sum_{\bn\in B(\sP)\cap\NN^p}\frac{t_1^{n_1}\cdots t_p^{n_p}}{n_1!\cdots n_p!}.
	$$
\end{corollary}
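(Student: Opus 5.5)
The plan is to prove the values $1$ or $0$ directly; the volume polynomial formula then follows by substituting these values into the definition of ${\rm tvol}$. By \autoref{thm_augmented_main}(ii), $\deg_{\underline{\mathcal H}}^\bn(\Sigma_\sP)=0$ whenever $\bn\notin B(\sP)$. So it remains to show that the degree equals exactly $1$ when $\bn\in B(\sP)\cap\NN^p$.

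Fix such an $\bn$ and consider the cycle
$$
\Delta_\bn := \left(\Pi_1^{-1}(\mathcal H_1)\right)^{n_1}\st\cdots\st\left(\Pi_p^{-1}(\mathcal H_p)\right)^{n_p}.
$$
\begin{enumerate}[\rm (1)]
\item Each $\Pi_i^{-1}(\mathcal H_i)=\mathcal H_i\times\prod_{j\ne i}\RR^{E_j}$ is a tropical hyperplane in $\RR^E$. It is the Bergman fan of a rank-$(m-1)$ matroid, in the projective-free sense of tropical linear spaces in $\RR^E$, so it is a tropical linear space with all weights $1$.
\item Stable intersections of tropical linear spaces are empty or tropical linear spaces with weight $1$, by the fact quoted before the corollary. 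Hence $\Delta_\bn$ is a tropical linear space of codimension $r$, the $n_i$-fold self-intersection within the $i$-th block giving a uniform-type linear space.
\item The augmented Bergman fan $\Sigma_\sP$ has the same support as the augmented Bergman fan of the multisymmetric lift $M_\pi(\sP)$. That augmented Bergman fan is in turn the (affine-chart) tropical linear space of the free coextension matroid of $M_\pi(\sP)$ on $E\sqcup\{0\}$, again with weight $1$ on all facets.
\item Thus $\Sigma_\sP\st\Delta_\bn$ is a zero-dimensional stable intersection of two tropical linear spaces of complementary dimension. It is therefore either empty or a tropical linear space of dimension $0$, namely a single point of weight $1$.
\end{enumerate}
Since the degree is positive by \autoref{thm_augmented_main}(ii), it must equal $1$.

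The main obstacle is step (4), together with making step (3) precise. We must justify that $\Sigma_\sP$, and the pullbacks of $\mathcal H_i$, are genuinely tropical linear spaces, i.e.\ Bergman fans of valuated matroids with all weights one, living in a common ambient $\RR^E$. The ``stable intersection is a linear space'' theorem (Speyer) must then apply. This means matching the augmented Bergman fan with a Bergman fan of a matroid after the standard identification $\RR^{E\sqcup 0}/\RR\mathbf 1\cong\RR^E$, for instance via \cite{BHMPW}. The cited fact also must include the case of a zero-dimensional intersection, where the weight-$1$ conclusion is exactly the desired degree. If this identification is delicate, an alternative is to use the Fulton--Sturmfels correspondence as in the proof of \autoref{thm_augmented_main}(iii). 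Under that correspondence, $\mathcal H_i$ corresponds to the pullback of the hyperplane class of $\PP^{m_i}$ on $X_\bm$, and \cite[Corollary~1.4]{ELP} then gives the intersection numbers directly. In that approach positivity still comes from \autoref{thm_augmented_main}(ii).
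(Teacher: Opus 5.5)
Your argument is correct and has the same skeleton as the paper's proof. The zero/positive pattern comes from \autoref{thm_augmented_main}(ii), and the bound $\deg\le 1$ comes from the fact that ``linear'' cycles intersect to linear cycles with multiplicity one. The difference is the framework in which that fact is applied.

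The paper stays inside the polymatroid setting. It identifies $\Pi_i^{-1}(\mathcal H_i)$ with the augmented Bergman fan $\Sigma_{\mathscr Q_i}$ of the polymatroid with $\rk_{\mathscr Q_i}(I)=\sum_{j\in I}(m_j-\delta_{ij})$, whose multisymmetric lift has $E_i$ as its unique circuit. It then writes the multidegree as $\int_{X_\bm}[\Sigma_{\mathscr Q_1}]^{n_1}\cdots[\Sigma_{\mathscr Q_p}]^{n_p}\cdot[\Sigma_\sP]$. Finally it applies the Eur--Larson product rule for augmented Bergman classes \cite[Theorem~3.14]{ELP}, terminating at the rank-zero polymatroid, whose augmented Bergman fan is the origin with weight one.

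You instead pass through the multisymmetric lift and the free coextension to ordinary Bergman fans of matroids on $E\sqcup\{0\}$. You then invoke Speyer's stable-intersection theorem \cite{Speyer} directly in $\RR^{E\sqcup 0}/\RR\mathbf{1}\cong\RR^E$.

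\begin{itemize}
\item Your route is purely tropical, and its last step is transparent: a $0$-dimensional tropical linear space is a single point of weight one. The cost is the extra support identification of augmented Bergman fans with Bergman fans of free coextensions.
\item The paper's route avoids that identification. It also works in the same Chow ring of $X_\bm$ that drives its Lorentzian argument.
\end{itemize}

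You should state step (1) more precisely. The relevant matroid is the rank-$(m-1)$ matroid on $E$ whose unique circuit is $E_i$; this is exactly the paper's $M_\pi(\mathscr Q_i)$. The fan $\Pi_i^{-1}(\mathcal H_i)$ is its augmented (affine) tropical linear space. In the chart $w_0=0$, it is the Bergman fan of the rank-$m$ matroid on $E\sqcup\{0\}$ whose only circuit is $E_i\cup\{0\}$. Read literally as an ordinary Bergman fan of a matroid on $E$, it would contain $\RR\mathbf{1}_E$ in its lineality space, which $\Pi_i^{-1}(\mathcal H_i)$ does not. With step (1) read this way, all cycles live in one ambient and your steps (2)--(4) go through.
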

\begin{proof}
	By \autoref{thm_augmented_main}, we already have that $\deg_{\underline{\mathcal H}}^\bn(\Sigma_\sP) > 0$ if and only if $\bn \in B(\sP)$.
	Therefore it suffices to show that we always have $\deg_{\underline{\mathcal H}}^\bn(\Sigma_\sP) \in \{0, 1\}$.
	For each $i\in[p]$, let $\mathscr{Q}_i$ be the polymatroid with cage $\bm$ and rank function
	$$
	\rk_{\mathscr{Q}_i}(I) \;:=\; \sum_{j \in I} \left(m_j-\delta_{i,j}\right),
	$$ 
	where $\delta_{i,j}$ is the Kronecker delta.
	As tropical cycles, we have that the augmented Bergman fan of $\mathscr{Q}_i$ equals the tropical fan divisor $\Pi_i^{-1}(\mathcal{H}_i) \subset \RR^E$.
	Indeed, by \cite[Theorem 3.8]{ELP}, $\Sigma_{\mathscr{Q}_i}$ is a coarsening of  $\Sigma_{M_\pi(\mathscr{Q}_i)}$, and since the rank function of the multisymmetric lift $M_\pi(\mathscr{Q}_i)$ is given by 
	$$
	\rk_{M_\pi(\mathscr{Q}_i)}(S) \;=\; \min\big\lbrace \left|S\setminus \pi^{-1}(I)\right| + \rk_{\mathscr{Q}_i}(I) \;\mid\; I \subseteq [p]\big\rbrace \;=\; 
	\begin{cases}
		|S|-1 & \quad \text{if } E_i \subseteq S,\\
		|S| & \quad \text{otherwise,}
	\end{cases}
	$$
	it follows that $E_i$ is the unique circuit of $M_\pi(\mathscr{Q}_i)$.
	Therefore, under the Fulton--Sturmfels correspondence \cite{FS}, we obtain
	$$
	\deg_{\underline{\mathcal H}}^\bn(\Sigma_\sP)
	\;=\;
	\int_{X_\bm}
		\big[\Sigma_{\mathscr Q_1}\big]^{n_1}\cdots
		\big[\Sigma_{\mathscr Q_p}\big]^{n_p} \cdot \big[\Sigma_\sP\big].
	$$
	By \cite[Theorem~3.14]{ELP}, the product of two augmented Bergman classes with cage $\bm$ is either zero or, with coefficient one, another augmented Bergman class.
	Since the augmented Bergman fan of the unique rank-zero polymatroid is equal to the origin with weight one, it follows that $\deg_{\underline{\mathcal H}}^\bn(\Sigma_\sP)\in\{0,1\}$.
\end{proof}

\section*{Acknowledgments}

Motivation to bring this paper to completion came from a reading seminar on \emph{``Tropical Varieties''}, co-organized by  Laura Colmenarejo, Jacob Matherne, and the author of this paper, as part of {\it ALCO: A Learning Community} at NC State University.
We thank all the participants of this reading seminar. 

We thank June Huh for helpful comments and for pointing out relevant references.  

The author received support from NSF grant DMS-2502321 and Simons Foundation Travel Support for Mathematicians Award MPS-TSM-00013551.

	\bibliographystyle{amsalpha}
	\bibliography{references}

\end{document}